\newtheorem{thm}{Theorem}[section]
\newtheorem{cor}[thm]{Corollary}
\numberwithin{equation}{section}
\newcommand{\x}{\times}
\newcommand{\s}{\mathfrak {s}}
\newcommand{\C}{\mathbb C}
\newcommand{\Z}{\mathbb Z}
\newcommand{\N}{\mathbb N}
\newcommand{\Q}{\mathbb Q}
\newcommand{\R}{\mathbb R}
\newcommand{\RP}{{\mathbb R}{\mathbb P}}
\newcommand{\cpkk}{{\overline {{\mathbb C}{\mathbb P}^2}}}
\newcommand{\cpk}{{\mathbb {CP}}^2}
\DeclareMathOperator{\SW}{SW}
\newtheorem{corollary}[thm]{Corollary}
\newtheorem{lemma}[thm]{Lemma}
\newtheorem{proposition}[thm]{Proposition}
\newtheorem{question}[thm]{Question}
\theoremstyle{definition}
\newtheorem*{definition*}{Definition}
\newtheorem{definition}[thm]{Definition}
\newtheorem{remark}[thm]{Remark}
\newtheorem{example}[thm]{Example}
\newcommand{\blue}{\textcolor{blue}} 
\newcommand{\twprod}{\mathbin{\mathchoice%
    {\ooalign{\raise1.15ex\hbox{$\scriptstyle\sim$}\cr\hidewidth$\times$\hidewidth\cr}}%
    {\ooalign{\raise1.15ex\hbox{$\scriptstyle\sim$}\cr\hidewidth$\times$\hidewidth\cr}}%
    {\ooalign{\raise.85ex\hbox{$\scriptscriptstyle\sim$}\cr\hidewidth$\scriptstyle\times$\hidewidth\cr}}%
    {\ooalign{\raise.65ex\hbox{$\scriptscriptstyle\sim$}\cr\hidewidth$\scriptscriptstyle\times$\hidewidth\cr}}%
    }}
\newcommand{\CPb}{\overline{\mathbb{CP}}{}^{2}}
\newcommand{\CP}{{\mathbb{CP}}{}^{2}}
\newcommand{\Diff}{\operatorname{Diff{^+}}}
\begin{document}

\title[Four-manifolds with finite cyclic fundamental groups]{Smooth structures on four-manifolds \\ with finite cyclic fundamental groups}

\author[R. \.{I}. Baykur]{R. \.{I}nan\c{c} Baykur}
\address{Department of Mathematics and Statistics, University of Massachusetts, Amherst, MA 01003, USA}
\email{inanc.baykur@umass.edu}

\author[A. I. Stipsicz]{Andr\'{a}s I. Stipsicz}
\address{HUN-REN R\'enyi Institute of Mathematics\\
H-1053 Budapest\\ 
Re\'altanoda utca 13--15, Hungary}
\email{stipsicz.andras@renyi.hu}

\author[Z. Szab\'o]{Zolt\'an Szab\'o}
\address{Department of Mathematics\\
Princeton University\\
 Princeton, NJ, 08544}
\email{szabo@math.princeton.edu}

\begin{abstract}
For each nonnegative integer $m$ we show that any closed, oriented
topological four-manifold with fundamental group $\Z_{4m+2}$ and odd
intersection form, with possibly seven exceptions, either admits no
smooth structure or admits infinitely many distinct smooth structures
up to diffeomorphism. Moreover, we construct infinite families of
non-complex irreducible fake projective planes with diverse
fundamental groups.
\end{abstract}
\maketitle

\section{Introduction}
\label{sec:intro}

Since the advent of diffeomorphism invariants coming from gauge
theory, many topological four-manifolds have been shown to support
infinitely many, pairwise non-diffeomorphic, smooth structures. Indeed, no
smoothable four-manifold is known to admit only finitely many smooth
structures. One might expect this to be a general phenomenon, at least
for four-manifolds with certain fundamental groups.
Our main result underscores this expectation:

\begin{thm}
\label{thma} 
Let $X$ be a closed, oriented topological four-manifold with cyclic
fundamental group $\Z_{4m+2}$ for some nonnegative integer $m$, and
odd intersection form. Unless $b_2^+(X)=b_2^-(X) \leq 7$, the
four-manifold $X$ admits either no smooth structure or infinitely many
distinct smooth structures.
\end{thm}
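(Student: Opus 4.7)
The plan is to combine a topological classification of the allowed four-manifolds with a gauge-theoretic construction that produces infinitely many pairwise non-diffeomorphic smooth structures on a single smooth model, for each smoothable topological type. The dichotomy of the statement then reduces to the implication: if $X$ is smoothable, then $X$ admits infinitely many distinct smooth structures.

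First I would invoke the Hambleton--Kreck topological classification of closed oriented four-manifolds with finite cyclic fundamental group. For $\pi_1(X) = \Z_{4m+2}$ with odd intersection form, such manifolds are determined up to homeomorphism by their fundamental group, the intersection form on $H_2$ modulo torsion, the $w_2$-type, and the Kirby--Siebenmann invariant. This reduces the problem to checking the conclusion on a finite list of standard models for each pair $(b_2^+, b_2^-)$ outside the excluded range $b_2^+ = b_2^- \leq 7$.

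For each smoothable standard model $X_0$, I would then exhibit a smooth four-manifold $Y$ homeomorphic to $X_0$ which contains an embedded nullhomologous torus $T$ of self-intersection zero whose meridian is nullhomotopic in the complement $Y \setminus \nu T$. Such $Y$ can be built starting from a simply connected symplectic model obtained by reverse engineering (symplectic fiber sums and Luttinger-type surgeries on products of surfaces, in the spirit of earlier work by the authors and by Fintushel--Stern), followed by an order-$(4m+2)$ torus surgery which introduces the cyclic factor $\Z_{4m+2}$ into $\pi_1$, while preserving at least one unused nullhomologous torus $T$. Fintushel--Stern knot surgery along $T$ with knots $K_n$ of distinct Alexander polynomials then produces an infinite family $\{Y_{K_n}\}$ of smooth four-manifolds, all homeomorphic to $X_0$ (since $T$ is nullhomologous with nullhomotopic meridian), and pairwise distinguished by their Seiberg--Witten invariants via the knot surgery formula, adapted to the non-simply-connected setting through passage to finite abelian covers.

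The principal obstacle is the construction of the model $Y$: one must simultaneously realize the prescribed homeomorphism invariants and retain a nullhomologous torus with nullhomotopic meridian available for knot surgery, all packaged inside a single smooth four-manifold. The exceptional range $b_2^+ = b_2^- \leq 7$ reflects the minimum size of the symplectic building blocks currently known to support both the fundamental-group surgery and an auxiliary knot-surgery torus. A secondary technical issue is verifying that the Seiberg--Witten invariants remain effective at distinguishing the $Y_{K_n}$ in the presence of the $\Z_{4m+2}$-torsion in $H_1(Y;\Z)$; I would handle this by working with the equivariant or twisted Seiberg--Witten invariants on the appropriate finite cyclic cover, where the knot surgery formula takes its familiar multiplicative form.
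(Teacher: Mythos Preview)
Your outline has the right architecture (Hambleton--Kreck classification plus an exotic-family machine), but there are three concrete gaps that prevent it from going through as stated.

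\textbf{Knot surgery on a nullhomologous torus does not distinguish.} You propose Fintushel--Stern knot surgery along a \emph{nullhomologous} torus $T$, distinguishing the results via Alexander polynomials. But the knot surgery formula reads $\SW_{Y_K}=\SW_Y\cdot\Delta_K(t_T)$ with $t_T=\exp(2[T])$; when $[T]=0$ one has $t_T=1$ and $\Delta_K(1)=\pm1$, so nothing is detected. What actually works along a nullhomologous torus is the \emph{reverse-engineering} mechanism: vary the surgery coefficient $1/k$ and use the Morgan--Mrowka--Szab\'o product formula (Proposition~\ref{prop:torusSW}, Remark~\ref{rk:FStorusSW}) to see $M_{\SW}\to\infty$. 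The paper uses exactly this, not knot surgery, to generate its infinite families.

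\textbf{Your construction cannot produce even $b_2^+$.} Starting from a simply connected symplectic model with $b_1=0$ forces $b_2^+$ odd, and a single torus surgery introducing $\Z_{4m+2}$ into $\pi_1$ does not change $\chi$ or $\sigma$. So your recipe never reaches the homeomorphism types with even $b_2^+$, which are half of the cases in the theorem. Passing to a cover afterwards does not fix this: you need to \emph{build} the manifold so that its double cover carries a controllable family. The paper handles this by an intrinsically $\Z_2$--equivariant construction (\S\ref{sec:fibersum}, \S\ref{sec:signone}): one builds simply connected $\widetilde{X}_k$ with a free involution $\tau_k$, performs the torus surgeries \emph{equivariantly}, and distinguishes the quotients $X_k=\widetilde{X}_k/\tau_k$ via $M_{\SW}(\widetilde{X}_k)$. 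Your vague appeal to ``twisted Seiberg--Witten invariants on a cover'' is not a substitute for this equivariant input.

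\textbf{The geography is not filled.} You assert that a model $Y$ exists for every smoothable $(b_2^+,b_2^-)$ outside the excluded range, but give no mechanism for reaching, say, $(0,b)$, $(1,b)$, or general $(a,b)$ with $|a-b|$ large. The paper's proof is organized around this point: it first produces seed families with $\sigma=0$ (Theorem~\ref{thm:signzero}, covering $a=b\geq 8$) and $\sigma=-1$ (Theorem~\ref{thm:exoticSignatureminusone}, covering $a=b-1\geq 0$), then blows up and reverses orientation, using Corollary~\ref{cor:BlowUpDistinction} to propagate the distinction under $\#\,\ell\,\CPb$. The passage from $\Z_2$ to $\Z_{4m+2}$ is then a separate step, done either by equivariant circle sum with $L(2m+1,1)$ or by changing a Luttinger coefficient from $2$ to $4m+2$. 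None of this structure is visible in your proposal.
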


Recall that the intersection form $Q_X$ of a closed, oriented
four-manifold $X$ is \emph{odd} if the pairing $Q_X(\alpha , \alpha )=
\langle \alpha \cup \alpha, [X]\rangle$ is odd for some $\alpha \in
H^2(X; \Z)$, where $[X]$ is the orientation class; $Q_X$ is
\emph{even} otherwise. Odd intersection forms of smoothable
four-manifolds with cyclic fundamental group are isomorphic to
$a\,\langle 1 \rangle \oplus b\,\langle -1 \rangle$ for $a=b_2^+(X)$
and $b=b_2^-(X)$ with $a+b>0$, \cite{donaldson:definite1, donaldson:definite2}.

By the homeomorphism classification in our case (to be reviewed in
$\S$\ref{sec:topclass}), Theorem~\ref{thma} leaves out only seven
smoothable topological four-manifolds for each fixed fundamental group
$\Z_{4m+2}$, for which we currently do not know the existence of more
than one smooth structure (cf. also Remark~\ref{rk:remaining}).

In contrast, no distinct smooth structures are known on infinitely many \emph{simply connected} four-manifolds with intersection forms $a\,\langle 1 \rangle \oplus b\,\langle -1 \rangle$ with even $a, b$, and to date, no exotic smooth structures are known on simply
connected four-manifolds with $ab=0$.
As we focus on four-manifolds with cyclic fundamental groups, we
handle more challenging cases 
by analyzing the smooth structures on their finite covers. 
Our work in this direction therefore hinges on
equivariant constructions of four-manifolds with non-trivial
Seiberg-Witten invariants and orientation-preserving free
involutions---building on our earlier works in
\cite{baykur-hamada:signaturezero, stipsicz-szabo:definite}.

Based on our Theorem~\ref{thma}, and accompanying observations (see
Remark~\ref{rk:irred}), we are prompted to pose a broader question:

\begin{question} 
Does every closed, oriented topological four-manifold with a finite
cyclic fundamental group admit either no or infinitely many smooth
structures?
\end{question}

\smallskip
A four-manifold $X$ is said to be an \emph{exotic} copy of another
four-manifold $Z$ if $X$ and $Z$ are homeomorphic but not
diffeomorphic.  Theorem~\ref{thma} translates to any smooth $Z$ in the
listed homeomorphism classes having infinitely many exotic copies. If
$X$ and $Z$ are closed, smooth four-manifolds with the same rational
cohomology ring and with isomorphic integral intersection forms, 
but are not diffeomorphic, we say that
$X$ is a \emph{fake} copy of $Z$.

A relatively recent result in four-dimension topology is the complete
classification of complex \emph{fake projective planes}, namely, fake
$\CP$s that admit complex structures.
The first example was found by
Mumford \cite{mumford}, with further examples in \cite{ishida-kato, keum}. Building on the classification of Prasad and
Yeung \cite{PrasadYeung}, computer-assisted calculations in arithmetic
geometry established that there are exactly 
$50$ smooth fake projective planes that admit a complex structure
\cite{CartwrightSteger}. These complex surfaces are smoothly
irreducible; see Proposition~\ref{prop:FPPirreducible}. Recall that a smooth four-manifold is \emph{irreducible}
if it is not a smooth connected sum of two four-manifolds neither of
which is a homotopy four-sphere.

Motivated by this array of results, Fintushel and Stern raised the
question of whether there were irreducible fake projective planes
besides the complex ones and what one could tell about their
fundamental groups \cite{Stern:CornellTalk}.  Our second theorem
provides insights via constructions of non-symplectic fake $\CP$s.

\begin{thm}\label{thmb} 
There is an infinite family of pairwise non-diffeomorphic, irreducible
fake projective planes with the same fundamental group $G$, such that
$G$ can be taken to be $R \x \Z_2$, where $R$ is the fundamental group
of any rational homology three-sphere, or $\Z_m \x D_m$, for any odd
$m$, where $D_m$ is the dihedral group of order $2m$.  None of these
fake projective planes admit symplectic or complex structures.
\end{thm}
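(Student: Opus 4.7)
The overall plan is to produce, for each prescribed fundamental group $G$, a simply connected smooth 4-manifold $\tilde M$ with non-trivial Seiberg-Witten invariants that admits a free, orientation-preserving $G$-action whose quotient $M = \tilde M / G$ is homeomorphic to a fake $\CP$. Infinitely many such $\tilde M$ carrying compatible $G$-actions are then produced by equivariant knot surgery along a $G$-invariant torus in $\tilde M$, and the corresponding quotients serve as the desired irreducible fake projective planes.

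First I would fix a topological model $Z$ with $\pi_1(Z) = G$, $\chi(Z) = 3$, $\sigma(Z) = 1$, intersection form $\langle 1 \rangle$, and the correct second Stiefel--Whitney class; for the finite groups in question such a closed topological 4-manifold exists by the Hambleton--Kreck classification of topological 4-manifolds with finite fundamental groups, and the specific structures $G = R \x \Z_2$ or $G = \Z_m \x D_m$ (with $m$ odd) are arranged so that the requisite cohomological conditions---vanishing of $H_1(G; \Q)$ and compatibility of $w_2$---are met. Next, building on the equivariant constructions of \cite{baykur-hamada:signaturezero, stipsicz-szabo:definite}, produce a smooth simply connected 4-manifold $\tilde M$ with $\chi(\tilde M) = 3|G|$ and $\sigma(\tilde M) = |G|$ carrying a free orientation-preserving $G$-action, together with a $G$-invariant homologically essential torus $T$ of self-intersection zero inside a cusp neighborhood; standard classification invariants verify that $\tilde M / G$ is homeomorphic to $Z$. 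Performing equivariant Fintushel--Stern knot surgery along $T$ then yields a family $\{\tilde M_K\}$ indexed by knots $K \subset S^3$, with $\SW_{\tilde M_K}$ expressed in terms of the Alexander polynomial $\Delta_K(t)$ and $\SW_{\tilde M}$.

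To distinguish the quotients $M_K = \tilde M_K / G$, I would pass to the universal cover, where $b^+(\tilde M_K) > 1$ so that Seiberg--Witten invariants are unambiguously defined, and argue that any diffeomorphism $M_K \to M_{K'}$ lifts to a $G$-equivariant diffeomorphism of the covers; the resulting equality of SW invariants constrains $\Delta_K$ and $\Delta_{K'}$ to match up to a controlled ambiguity, so infinitely many knots yield infinitely many pairwise non-diffeomorphic $M_K$. Irreducibility of $M_K$ reduces to irreducibility of $\tilde M_K$: a connected sum decomposition $M_K \cong A \# B$ with neither summand a homotopy sphere forces $\pi_1(A) = 1$ (say), giving $\tilde M_K \cong \tilde B \# (|G| \cdot A)$ after passing to the universal cover, and a short Euler-characteristic computation---using that $\pi_1(B) = G$ and $H_1(G; \Q) = 0$---shows $\tilde B$ cannot be a homotopy sphere, so irreducibility of $\tilde M_K$, which follows from non-vanishing of $\SW_{\tilde M_K}$ by the standard Fintushel--Stern argument, descends to $M_K$.

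Finally, to rule out symplectic structures I note that $b^+(M_K) = 1$ and $b^-(M_K) = 0$ force any symplectic $M_K$ to be minimal; the Li--Liu classification of minimal symplectic 4-manifolds with $b^+ = 1$ then forces $M_K \cong \CP$ or an $S^2$-bundle over a surface, neither of which has fundamental group $G$. Complex structures are excluded because a complex surface with the rational cohomology ring of $\CP$ has, by Yau and Prasad--Yeung, either trivial $\pi_1$ or infinite torsion-free $\pi_1$, contradicting our finite non-trivial $G$. The principal difficulty lies in the equivariant construction at the second step: one must produce $\tilde M$ in which the $G$-action is free, smooth, and orientation-preserving, whose quotient carries $w_2$ and intersection form matching those of $\CP$, and which contains a $G$-invariant essential torus of the correct Seiberg--Witten geometry for equivariant knot surgery---all while keeping enough invariants under control to identify the homeomorphism type of the quotient with $Z$. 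The particular forms $R \x \Z_2$ and $\Z_m \x D_m$ reflect exactly the constraints under which such an equivariant model can be assembled from the building blocks of the authors' prior work.
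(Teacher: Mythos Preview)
Your proposal has a fundamental structural gap that makes it unworkable as stated. You plan to build a \emph{simply connected} $\tilde M$ carrying a free $G$-action, so that $M=\tilde M/G$ has $\pi_1(M)=G$. But a free action of $G$ on a closed simply connected manifold forces $G$ to be finite, whereas the theorem allows $G=R\times\Z_2$ with $R=\pi_1(Y)$ for an \emph{arbitrary} rational homology three-sphere $Y$, and such $R$ is often infinite (any hyperbolic rational homology sphere, for instance). Your appeal to Hambleton--Kreck for the topological model, and your Euler-characteristic bookkeeping $\chi(\tilde M)=3|G|$, $\sigma(\tilde M)=|G|$, likewise presuppose finiteness. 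The paper avoids this entirely: it never passes to the universal cover. It works only with \emph{double} covers, corresponding to the $\Z_2$ factor, and these double covers are explicitly reducible---they are of the form $Z\#S(Y)$ with $Z$ an exotic $\CP\#3\CPb$.

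This leads to a second gap in your irreducibility argument. You want to deduce irreducibility of $M_K$ from irreducibility of its cover $\tilde M_K$, but in the actual construction the relevant cover \emph{is} a nontrivial connected sum, so that route is closed. The paper's argument is more delicate: it shows $\pi_1(M)$ has nontrivial center and hence is not a free product, so in any putative splitting $M=X_1\#X_2$ one summand is simply connected; one then counts the rational images of Seiberg--Witten basic classes of the double cover under the two decompositions $Z\#S(Y)$ versus $\widetilde{X}_2\#2X_1$ and obtains a contradiction (two basic classes versus at least four).

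Finally, two smaller points. Your non-complex argument misstates the classification: complex fake projective planes have finite (not torsion-free infinite) fundamental groups; the paper instead observes that Yau's theorem would force the universal cover to be the complex ball, hence contractible, which it visibly is not. And for non-symplecticity the paper does not invoke Li--Liu; it exhibits an embedded torus of square $-2$ in the double cover, which after the orientation reversal becomes a square $+2$ torus violating the adjunction inequality, so the double cover (and hence the quotient) cannot be symplectic.
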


 Reducible fake four-manifolds are easy to come
by; the connected sum $X$ of a four-manifold $Z$ with any rational
homology four-sphere is a fake copy of $Z$. We obtain our irreducible
fake projective planes through a variety of equivariant constructions,
where we distinguish the smooth structures and detect
their irreducibility through Seiberg-Witten invariants of their double
covers, after orientation-reversal. To the best of our knowledge,
these non-complex irreducible fake projective planes, along with the
examples with $\pi_1=\Z_2$ in \cite{stipsicz-szabo:definite} subsumed
here, are the first of their kind.

The fake projective plane fundamental groups we obtain in this paper
are all semi-direct products with $\Z_2$. Otherwise, the groups
mentioned in Theorem~\ref{thmb} constitute a fairly large collection
of groups, spanning over infinitely many finite abelian, finite
non-abelian, and infinite groups. Indeed, we get non-complex
irreducible fake projective planes with first integral homology group
realizing any prescribed finite abelian group with a $\Z_2$ factor in
its primary decomposition. As our way of constructing
irreducible fake projective planes falls short of yielding symplectic
examples, we note the following natural questions:
\begin{question} 
Which groups are realized as fundamental groups of irreducible fake
projective planes? Are there only 50 groups that can arise as
fundamental groups of {\emph {symplectic}} fake projective planes?
\end{question}

\medskip

\noindent {\textit{Outline of the paper}}: We review the
topological classification of four-manifolds with finite cyclic
fundamental group in $\S$\ref{sec:topclass}. General equivariant construction
schemes for four-manifolds with free involutions via normal connected
sum and
circle sum operations, respectively, and a few key lemmas regarding
their algebraic and smooth topology are given in
$\S$\ref{sec:fibersum} and $\S$\ref{sec:circlesum}. We build our
irreducible four-manifolds with \mbox{$\pi_1=\Z_{2}$, } odd intersection
form and signatures $\sigma=0$ and $\pm1$, in $\S$\ref{sec:signzero}
and $\S$\ref{sec:signone}. Expanding on these constructions, we prove
Theorem~\ref{thma} in $\S$\ref{sec:proofA}. Finally, in
$\S$\ref{sec:fpp}, we present our constructions of non-complex fake
projective planes and prove Theorem~\ref{thmb}.

\vspace{0.1in} 
\noindent {\textit{Acknowledgements}}: RIB was partially
supported by the NSF grant DMS-2005327, AS was partially supported by
the NKFIH Grant K146401 and by ERC Advanced Grant KnotSurf4d, and SzZ
was partially supported by the
Simons Grant \emph{New structures in low dimensional topology.} The authors
would like to thank the organizers of the 2023 \textit{Exotic
  4-manifolds} workshop at Stanford University, supported by the
Simons Collaboration Grant \textit{New structures in low dimensional
  topology}, for an extremely stimulating event, where this
collaboration started.

\smallskip
\section{Background results} \label{sec:background}

In this section we gather some preliminary results that will be repeatedly used
throughout the paper.

\subsection{Topological classification of four-manifolds with $\pi_1=\Z_n$} \label{sec:topclass} \

By the works of Freedman~\cite{freedman}, and Hambleton and Kreck 
\cite{hambleton-kreck2, hambleton-kreck1, hambleton-kreck3}, the
homeomorphism type of a closed, oriented, topological four-manifold
$X$ with fundamental group $\Z_n$ is determined by its
Kirby-Siebenmann invariant, intersection form and the $w_2$--type. The
$w_2$--type is determined by whether or not $X$ and its universal
cover $\widetilde{X}$ are spin; it might be that (i) both are
non-spin, (ii) both are spin,  or (iii) $X$ is non-spin, but
$\widetilde{X}$ is spin.
The intersection form of $X$ is odd in case (i) and even in cases
(ii) and (iii).

\begin{figure}[htb]
	\centering
	\includegraphics[height=110pt]{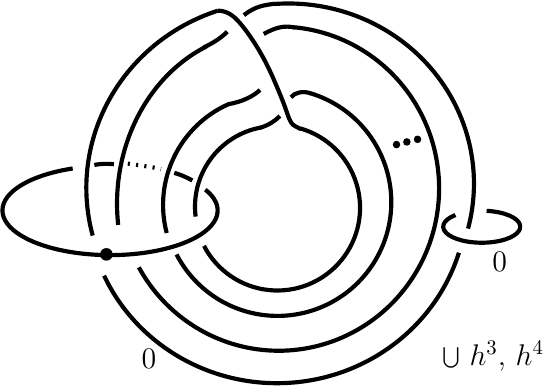}
	\caption{The Kirby diagram of the smooth rational homology
          four-sphere $L_n$. The middle curve wraps around $n$
          times. }
	\label{fig:rat_hom_S4new}
\end{figure}

Let $L_n$ be the rational homology four-sphere given by the Kirby
diagram in Figure~\ref{fig:rat_hom_S4new}, which is the spun of the
lens space $L(n,1)$ \cite{plotnick}. Recall that the spun $S(Y)$ of a
three-manifold $Y$ is the four-manifold $(S^1\times (Y \, \setminus \,
D^3))\cup D^2\times S^2$, where we glue the two-handle of $D^2\times
S^2$ along a circle $S^1\times \{ p\}$ with framing given by
$S^1\times \{ q\}$, where $p,q$ are boundary points of the disk $D^3$
deleted from $Y$.

\begin{remark}
Any even framing on the middle $2$--handle in
 Figure~\ref{fig:rat_hom_S4new}  gives  $L_n$. Whereas, an odd framing yields the Kirby diagram of the
  companion rational homology four-sphere $L'_n$, the
  \emph{twist-spun} of $L(n,1)$;
  cf. \cite[Figure~11]{baykur-kamada}. Up to homeomorphism, these are
  the only two closed, oriented, topological manifolds with $\pi _1=\Z _n$
  and $b_2=0$; $L_n$ is spin, while $L_n'$ is not.
  It is easily seen by Kirby calculus that $L_n\# \CP$ and $L'_n\#\CP$ are
  diffeomorphic.
  \end{remark}

By the above topological classification result,
combined with Donaldson's diagonalizability theorem
\cite{donaldson:definite1, donaldson:definite2}, we conclude that when the intersection form is definite,
smooth four-manifolds of
$w_2$-type (i) with the intersection form $a\,\langle 1 \rangle \oplus
b\,\langle -1 \rangle$ and fundamental group $\Z _n$
are homeomorphic to the smooth
four-manifold
\begin{equation*}
R_{a,b}(n):=L_n \# a \, \CP \# b\, \CPb, \text{ for } a, b \in \N \text{ with }a+b>0 .
\end{equation*}
When $n=2$, the case we are going to deal with the most, we write
$R_{a,b}:=R_{a,b}(2)$.

\subsection{A $\Z_2$--equivariant construction via generalized normal connected sum} \label{sec:fibersum}  \

Hereon, $\Sigma_g^b$ denotes a compact, connected, oriented surface of genus $g$ and $b$ boundary components, and we write $\Sigma_g$ when $b=0$.

Let $Z$ be a closed, oriented, smooth four-manifold with an embedded,
orientable surface $\Sigma$ of genus $g \geq 1$ and self-intersection zero. Let
$\nu \Sigma$ denote a neighborhood of $\Sigma$ isomorphic to its
normal (trivial) disk bundle in $Z$. Let $\phi$ be the non-trivial deck
transformation of a double cover $\Sigma_g \to \#_{g+1}
\RP^2$. Given an identification $\eta \colon \nu \Sigma \to D^2 \x
\Sigma_g$, that is, a framing of $\Sigma$, we define the
orientation-reversing self-diffeomorphism
\begin{equation}\label{eq:gluing}
  \Phi:=(\eta|_{\partial (\nu \Sigma)})^{-1} \circ
  (\textrm{id}_{S^1} \x \phi)\circ \eta|_{\partial (\nu \Sigma)}
\end{equation} 
of $\pm \partial (\nu \Sigma)$, which is a free involution. Then, the four-manifold
\[ \widetilde{X}:= (Z \setminus \nu \Sigma)\cup_\Phi (Z \setminus \nu \Sigma) \, , \]
which we are going to refer as the \emph{double of $(Z, \Sigma )$},
admits a smooth, orientation-preserving, free involution $\tau$ that
swaps the two copies of $Z \setminus \nu \Sigma$, while restricts to
$\Phi$ on their identified boundaries in $\widetilde{X}$. The quotient
is a smooth, closed, oriented four-manifold $X:= \widetilde{X} \, /
\tau$. Note that the result might depend on the chosen framing and
the involution $\phi$ as well.

The next few lemmas explain how  to calculate various topological
invariants of the quotient four-manifold $X$ in terms of those of $(Z,
\Sigma)$ we started with.  Here $\lceil x \rceil \in \Z$ denotes the
ceiling value of $x \in \R$.

\begin{lemma}\label{lem:pi1}
Let $\pi_1(Z\setminus \, \Sigma)$ be abelian. Assume that the
inclusion induced homomorphism $H_1(\Sigma; \Z) \to H_1(Z\, \setminus
\Sigma; \Z)$ is onto and its kernel contains at least $\lceil
\frac{g}{2}\rceil$ dual pairs in a symplectic basis of
$H_1(\Sigma; \Z)$. Then, we can choose $\eta$ in~\eqref{eq:gluing} so that
$\pi_1(X)=\Z_2$. When $\pi_1(Z \setminus \, \Sigma) =1$, for any
choice of $\eta$ we get $\pi_1(X) = \Z_2$.
\end{lemma}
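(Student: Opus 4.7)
The plan is to apply Seifert-Van Kampen to the decomposition $X = Y \cup_{\partial Y} W$, where $Y = Z \setminus \nu\Sigma$ and $W$ is the twisted $I$-bundle over $\partial Y/\Phi \cong S^1 \times \#_{g+1}\RP^2$ whose boundary is $\partial Y \cong S^1 \times \Sigma_g$. I would first record
\[\pi_1(W) = \Z\langle t\rangle \times \langle c_1, \dots, c_{g+1} \mid c_1^2 \cdots c_{g+1}^2\rangle, \qquad \pi_1(\partial Y) = \Z\langle\mu\rangle \times \pi_1(\Sigma_g),\]
and denote by $\alpha_i, \beta_i, m \in H := \pi_1(Z\setminus\Sigma)$ the $\eta$-framed images of a symplectic basis $\{a_i, b_i\}$ (supplied by the hypothesis) and of the meridian $\mu$, and by $A_i, B_i \in \pi_1(\Sigma_g) \subset \pi_1(\#_{g+1}\RP^2)$ the corresponding images on the $W$-side; the Van Kampen identifications are $t = m$, $A_i = \alpha_i$, $B_i = \beta_i$. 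Since $H_1(\Sigma) \to H$ is surjective and $H$ is abelian, $\pi_1(\partial Y) \to H$ is surjective too, and the pushout simplifies to $\pi_1(X) \cong \pi_1(W)/\langle k(\ker(\pi_1(\partial Y)\to H))\rangle$, where $k \colon \pi_1(\partial Y) \to \pi_1(W)$ is the double-covering inclusion. The abelianity of $H$ pushes all commutators of $\pi_1(\Sigma_g)$ into this kernel, so in $\pi_1(X)$ the elements $A_i, B_j, t$ mutually commute and satisfy precisely the relations that $\alpha_i, \beta_i, m$ satisfy in $H$.

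Next I would invoke the kernel hypothesis. Since the mapping class group of $\Sigma_g$ acts transitively on symplectic bases, I can arrange, by choosing the double cover $\Sigma_g \to \#_{g+1}\RP^2$ accordingly, that $\phi_*$ fixes each $a_i$ and negates each $b_i$ in the given basis. Each of the $\lceil g/2\rceil$ dual pairs $(a_i,b_i)$ in $\ker(H_1(\Sigma)\to H)$ then yields $A_i = \alpha_i = 0$ and $B_i = \beta_i = 0$ in $\pi_1(X)$. Substituting these into the single surface relation $c_1^2 \cdots c_{g+1}^2 = 1$ together with the abelianization of the $\langle A_i, B_j, t\rangle$-subgroup, one verifies that every $c_j$ becomes equal modulo this subgroup to a single generator $c := c_1$, with $c^2 \in \langle A_i, B_j, t\rangle$, and that $c$ acts on this subgroup by conjugation as $\Phi_H$. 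Hence $\pi_1(X)$ is an extension of $\Z_2$ (generated by $c$) by the image $H' \subseteq \pi_1(X)$ of $H$.

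The final task is to choose the framing $\eta$ so that $H'$ is trivial. Varying $\eta$ shifts each $\alpha_i, \beta_i$ for $i > \lceil g/2\rceil$ by arbitrary independent integer multiples of $m$, and the interplay of these shifts with the conjugation relations $c\alpha_i c^{-1} = \alpha_i$, $c\beta_i c^{-1} = \beta_i^{-1}$ supplies enough additional identifications among the commuting generators of $H'$ to force $H' = 1$. In the simply connected case $\pi_1(Z\setminus\Sigma) = 1$, we have $H' = 1$ automatically, every $A_i, B_i, t$ is trivial in $\pi_1(X)$, and $\pi_1(X) = \pi_1(\#_{g+1}\RP^2)/\pi_1(\Sigma_g) = \Z_2$ regardless of $\eta$. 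The hard part will be verifying that last step for general $H$: that a single suitable framing, combined with the $\lceil g/2\rceil$ dual-pair relations and the $\Phi_H$-equivariance imposed by $c$, indeed collapses the entire image $H'$ to the identity, which demands careful bookkeeping of the amalgamated-product relations between the abelian group $H$ and the non-abelian $\pi_1(\#_{g+1}\RP^2)$.
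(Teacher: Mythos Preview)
Your decomposition $X = Y \cup W$ with $W$ a twisted $I$-bundle over $S^1\times\#_{g+1}\RP^2$ is a legitimate alternative to the paper's approach, and the resulting Van Kampen presentation $\pi_1(X)\cong\pi_1(W)/\langle\!\langle k(\ker(\pi_1(\partial Y)\to H))\rangle\!\rangle$ is correct. Note, however, that your normal subgroup $H'\trianglelefteq\pi_1(X)$ is exactly $\pi_1(\widetilde X)$, so what you are really computing is the same thing the paper computes directly by applying Seifert--Van~Kampen to $\widetilde X=(Z\setminus\nu\Sigma)\cup_\Phi(Z\setminus\nu\Sigma)$; the latter route avoids the nonabelian group $\pi_1(\#_{g+1}\RP^2)$ entirely.

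The genuine gap is your choice of $\phi$. If $\phi_*$ fixes each $a_i$ and negates each $b_i$, then $\phi_*$ preserves (setwise) the subgroup generated by any collection of dual pairs together with the commutator subgroup, so $\Phi_*(K)=K$. It follows that the amalgamation $\pi_1(\widetilde X)=\pi_1(\partial Y)/\langle\!\langle K\cup\Phi_*(K)\rangle\!\rangle$ is just $H$ itself, and your $H'$ equals $H$, not $1$. Your proposed rescue via meridian twists cannot work in general: in the typical situation where the meridian $m$ is already trivial in $H$ (for instance whenever $\Sigma$ has a geometrically dual sphere, as in every application in the paper), shifting $\alpha_i,\beta_i$ by multiples of $m$ does nothing at all. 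A concrete failure: for $g=2$ with $K$ generated by $a_1,b_1$ and $H\cong\Z^2$ generated by $\alpha_2,\beta_2$ with $m=0$, your construction yields $\pi_1(\widetilde X)=\Z^2$ and $\pi_1(X)$ infinite.

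The paper's key idea, which replaces your last step, is to use the freedom in $\eta$ not to twist by meridians but to precompose with a diffeomorphism $\psi$ of $\Sigma_g$, which has the effect of conjugating $\phi$. One takes $\phi$ in a symplectic basis where it \emph{interchanges} pairs, $\phi_*(a_i)=\pm a_{g+1-i}$ and $\phi_*(b_i)=\pm b_{g+1-i}$, and then chooses $\psi$ so that the $\lceil g/2\rceil$ kernel pairs land on the first $\lceil g/2\rceil$ basis pairs. Now $K$ and $\phi_*(K)$ together contain \emph{all} $g$ dual pairs, hence generate $H_1(\Sigma_g;\Z)$, and $\pi_1(\widetilde X)=1$. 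Your framework would go through verbatim with this choice of $\phi$, but the argument then collapses to the paper's.
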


\begin{proof}
Identify the $0$--section $\Sigma$ of $\nu \Sigma$ with $\Sigma_g = \{ 0 \} \x \Sigma_g$ via any $\eta_0  \colon \nu \Sigma \to D^2 \x \Sigma_g$. Under this identification, let $\{a'_i, b'_i\}$ denote the corresponding symplectic basis of $H_1(\Sigma_g; \Z)$ and let $K \leq H_1(\Sigma_g; \Z)$ correspond to the kernel of $H_1(\Sigma;\Z) \to H_1(Z \setminus \Sigma; \Z)$. 

There is a symplectic basis $\{a_i, b_i\}$ on $\Sigma_g$ where the
action of the free involution $\phi$ on $H_1(\Sigma_g; \Z)$ is such that
$\phi_*(a_i)= \pm a_{g-i}$ and $\phi_*(b_i)= \pm b_{g-i}$, for all $i=
1, \ldots \lceil \frac{g}{2}\rceil$. Recall that, by our hypothesis,
there are $\lceil \frac{g}{2}\rceil$ pairs among $\{a'_i, b'_i\}$
contained in $K \subset H_1(\Sigma_g; \Z)$. There exists a $\psi \in
\Diff(\Sigma_g)$ which maps these pairs in $K$ to the first $\lceil
\frac{g}{2}\rceil$ pairs $\{a_1, b_1, \ldots, a_{\lceil
  \frac{g}{2}\rceil}, b_{\lceil \frac{g}{2}\rceil}\}$. The elements in
$K \cup (\psi^{-1}\phi \,\psi)_*(K)$ generate all of $H_1(\Sigma_g; \Z)$.

Post-composing $\eta_0$ with $\textrm{id} \x \psi$, we get a new
$\eta$, and we can then take the corresponding gluing map $\Phi$
defined as in \eqref{eq:gluing}. Let $\Sigma'= \eta^{-1}( \{1 \} \x
\Sigma_g) \subset \partial (\nu \Sigma)$ be a push-off of $\Sigma$
under the framing induced by $\eta$. Because $\pi_1(Z\setminus
\Sigma)$ is abelian, the surjectivity of $H_1(\Sigma; \Z) \to
H_1(Z\setminus \Sigma; \Z)$ implies that the inclusion induced
homomorphisms $\pi_1(\Sigma') \to \pi_1(Z\setminus \nu \Sigma)$, and
in turn, $\pi_1(\partial(Z \setminus \nu \Sigma)) \to \pi_1(Z\setminus
\nu \Sigma)$ are also surjective. Then, by the Seifert-Van Kampen theorem,
\[ \pi_1(\widetilde{X})= \pi_1(Z \setminus \nu \Sigma)\underset{\pi_1(\partial(Z\setminus \nu \Sigma))}{*} \pi_1(Z \setminus \nu \Sigma) \]
is abelian, and by the above observations, it is equal to the quotient of $\pi_1(\Sigma')$ by $N'=(\eta^{-1})_*(\langle K \cup (\psi^{-1}\phi \,\psi)_*(K) \rangle$. So $\pi_1(\widetilde{X})=1$, which implies that $\pi_1(X)=\Z_2$.

When $\pi_1(Z \setminus \Sigma)=1$, both hypotheses on $H_1(\Sigma; \Z) \to H_1(Z \setminus \Sigma; \Z)$ hold vacuously, and since $K=H_1(\Sigma_g; \Z)$ in this case, any choice of $\eta$ will work.
\end{proof}

\smallskip
\begin{lemma}\label{lem:b2}
The Euler characteristic and the signature of $X$ are given by
\[ \chi(X)= \chi(Z)+ 2(g-1) \, \text{ and } \, \sigma(X)=\sigma(Z) \, . \]
When $Z \setminus \nu \Sigma$ contains an immersed surface of odd self-intersection, so does $X$.
\end{lemma}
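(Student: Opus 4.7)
The plan is to first compute $\chi(\widetilde{X})$ and $\sigma(\widetilde{X})$ directly from the decomposition $\widetilde{X} = (Z\setminus\nu\Sigma) \cup_{\Phi} (Z\setminus\nu\Sigma)$, and then divide by two using multiplicativity of $\chi$ and $\sigma$ under the free double cover $\widetilde{X}\to X$.

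For the Euler characteristic, the common boundary $\partial(\nu\Sigma) \cong S^1 \times \Sigma_g$ has $\chi = 0$, so inclusion–exclusion gives
\[
  \chi(\widetilde{X}) \,=\, 2\,\chi(Z\setminus\nu\Sigma) \,=\, 2\bigl(\chi(Z) - \chi(\Sigma_g)\bigr) \,=\, 2\chi(Z) + 4(g-1).
\]
For the signature, the trivial disk bundle $\nu\Sigma \cong D^2\times\Sigma_g$ has vanishing intersection form, so by Novikov additivity $\sigma(Z\setminus\nu\Sigma)=\sigma(Z)$, and a second application to the double gives $\sigma(\widetilde{X}) = 2\sigma(Z)$. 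Because $\tau$ is a free orientation-preserving involution of a closed oriented four-manifold, both $\chi$ and $\sigma$ are multiplicative under $\widetilde{X}\to X$ (for $\sigma$, via Hirzebruch's formula applied to pulled-back Pontryagin classes). Dividing each by $2$ yields $\chi(X) = \chi(Z) + 2(g-1)$ and $\sigma(X) = \sigma(Z)$.

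For the second assertion, I will use that the quotient map $p\colon \widetilde{X}\to X$ restricts to a diffeomorphism on the interior of either copy of $Z\setminus\nu\Sigma$ inside $\widetilde{X}$, since these two copies form a fundamental domain for the free $\tau$-action. Given an immersed surface $F \subset Z\setminus\nu\Sigma$ of odd self-intersection, I will first isotope $F$ into the interior of $Z\setminus\nu\Sigma$ and then transport it into $X$ through $p$; the number of transverse self-intersection points is preserved, so the image is an immersed surface in $X$ with odd self-intersection.

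I do not foresee any serious obstacle. The only care required is in orientation conventions: $\Phi$ was arranged to be orientation-reversing on $\partial(\nu\Sigma)$ precisely so that the two copies of $Z\setminus\nu\Sigma$ glue to a coherently oriented $\widetilde{X}$, and the multiplicativity of $\sigma$ under the cover is exactly what $\tau$ being orientation-preserving provides.
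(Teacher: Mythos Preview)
Your proposal is correct and follows essentially the same approach as the paper: compute $\chi$ and $\sigma$ of $\widetilde{X}$ from the decomposition into two copies of $Z\setminus\nu\Sigma$ (using inclusion--exclusion and Novikov additivity), then halve using multiplicativity under the free double cover; for the second claim, use that the interior of one copy of $Z\setminus\nu\Sigma$ embeds into $X$ via the quotient map. Your write-up is in fact slightly more careful than the paper's (you explicitly isotope $F$ into the interior and invoke Hirzebruch for signature multiplicativity), though note a small slip of phrasing: a single copy of $Z\setminus\nu\Sigma$, not both, is the fundamental domain for $\tau$.
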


\begin{proof}
From the decomposition $Z= (Z \setminus \nu \Sigma) \cup \nu \Sigma$,
we get $\chi(Z)= \chi(Z \setminus \nu \Sigma) +2-2g$ and
$\sigma(Z)=\sigma(Z \setminus \nu \Sigma)$. Then, from the
decomposition $\widetilde{X}= (Z\setminus \nu \Sigma) \cup (Z\setminus
\nu \Sigma)$, we obtain $\chi(\widetilde{X})=2\chi (Z)+4(g-1)$ and
$\sigma(\widetilde{X})=2 \sigma(Z)$. Finally, because the Euler
characteristic and the signature are multiplicative under finite
covers, we get $\chi(X)=\frac{\chi(\widetilde{X})}{2}= \chi(Z)+
2(g-1)$ and $\sigma(X)=\frac{\sigma(\widetilde{X})}{2}=\sigma(Z)$, as
claimed.  As $Z\setminus \nu \Sigma$ embeds into $X$, the second claim
follows at once.
\end{proof}

\smallskip
\begin{lemma}\label{lem:sympsum}
If $Z$ admits a symplectic form $\omega$ with respect to which $\Sigma$ is a symplectic submanifold, then the double $\widetilde{X}$ admits a symplectic structure $\widetilde{\omega}$, which pulls-back to $\pm \omega$
under the inclusions of the two copies of $Z \setminus \nu \Sigma$ into $\widetilde{X}$. If $Z \setminus \nu \Sigma$ does not contain any exceptional spheres, then $X$ is minimal. If also $\pi_1(Z\setminus \Sigma)$ is finite, or more generally, if $\pi_1(\widetilde{X})$ is residually finite, then $X$ is irreducible. 
\end{lemma}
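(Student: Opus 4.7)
The plan is to prove the three assertions in turn. For the symplectic form $\widetilde{\omega}$ on $\widetilde{X}$, I would exhibit $\widetilde{X}$ as a symplectic normal connected sum à la Gompf, with summands $(Z,\omega)$ and $(Z,-\omega)$ glued along $\Sigma$. On the second copy $\Sigma$ is still a symplectic submanifold, only with reversed area orientation. Since the quotient $\#_{g+1}\RP^2$ is non-orientable, the free involution $\phi$ reverses orientation on $\Sigma_g$, so after a Moser isotopy within its smooth mapping class $\phi$ becomes a symplectomorphism $(\Sigma,\omega|_\Sigma) \to (\Sigma,-\omega|_\Sigma)$. Replacing the framing $\eta$ by a symplectic tubular-neighborhood identification via Weinstein's theorem, the gluing map $\Phi$ in~\eqref{eq:gluing} becomes a compatible symplectic matching of collar neighborhoods; Gompf's theorem then provides the desired symplectic form $\widetilde{\omega}$ on $\widetilde{X}$ that pulls back to $\pm\omega$ on the two copies of $Z \setminus \nu\Sigma$.

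For minimality, I would first deduce minimality of $\widetilde{X}$ from Usher's minimality theorem for symplectic sums: since the gluing surface has genus $g \geq 1$ and, by assumption, neither copy of $Z \setminus \nu\Sigma$ contains an exceptional sphere, Usher's criterion applies directly. Minimality then descends to $X$: any exceptional sphere $E \subset X$ would lift under the unbranched double cover $p\colon \widetilde{X} \to X$ to a disjoint union $E_1 \sqcup E_2 \subset \widetilde{X}$ of spheres, because $\pi_1(S^2)=1$ forces $p^{-1}(E)$ to split as a trivial double cover of $E$. Each $E_i$ maps diffeomorphically onto $E$ and so has self-intersection $-1$, contradicting minimality of $\widetilde{X}$.

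For irreducibility of $X$, I would invoke the Hamilton--Kotschick theorem that every minimal symplectic four-manifold with residually finite fundamental group is smoothly irreducible. Residual finiteness of $\pi_1(\widetilde{X})$ implies that of $\pi_1(X)$, since $\pi_1(\widetilde{X})$ sits as an index-two subgroup of $\pi_1(X)$ and the class of residually finite groups is closed under finite-index extensions. When $\pi_1(Z \setminus \Sigma)$ is merely finite, the Seifert--Van Kampen calculation used in the proof of Lemma~\ref{lem:pi1} presents $\pi_1(\widetilde{X})$ as an amalgamated product of two finite groups over a common finite subgroup; such amalgams are virtually free and hence residually finite, reducing this case to the previous one.

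The hard part will be the first step: organizing the symplectic data (a choice of area form on $\Sigma_g$, the Moser isotopy of $\phi$, and a Weinstein upgrade of the framing $\eta$) so that the Gompf sum recipe applies without altering the smooth identification of $\widetilde{X}$ used earlier. This is a delicate but routine exercise with the standard symplectic-neighborhood toolbox; once completed, the minimality and irreducibility conclusions follow directly from Usher's theorem and the Hamilton--Kotschick theorem respectively.
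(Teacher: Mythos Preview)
Your arguments for the symplectic form on $\widetilde{X}$ and for minimality are essentially the paper's: Gompf sum of $(Z,\omega)$ with $(Z,-\omega)$, then Usher's theorem, then descent of minimality along the double cover. That part is fine.

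The irreducibility step has a genuine gap. You propose to apply Hamilton--Kotschick directly to $X$, after transferring residual finiteness from $\pi_1(\widetilde{X})$ to $\pi_1(X)$. But Hamilton--Kotschick requires the four-manifold to be \emph{symplectic}, and you have not produced a symplectic form on $X$; you only constructed $\widetilde{\omega}$ on $\widetilde{X}$. In fact $\widetilde{\omega}$ does not descend: the involution $\tau$ swaps the two copies of $Z\setminus\nu\Sigma$, on which $\widetilde{\omega}$ restricts to $+\omega$ and $-\omega$ respectively, so $\tau^*\widetilde{\omega}=-\widetilde{\omega}$ rather than $+\widetilde{\omega}$. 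In the paper's applications $X$ often has even $b_2^+$ and is therefore not symplectic at all, so this is not a matter of choosing a different form.

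The fix is the route the paper takes: apply Hamilton--Kotschick to $\widetilde{X}$ itself, which \emph{is} minimal symplectic with residually finite $\pi_1$, to conclude that $\widetilde{X}$ is irreducible; then observe that irreducibility descends along the finite cover $\widetilde{X}\to X$ (a nontrivial connected-sum splitting of $X$ would lift to one of $\widetilde{X}$, since the separating $S^3$ lifts to two copies). Your passage from residual finiteness of $\pi_1(\widetilde{X})$ to that of $\pi_1(X)$ is correct but unnecessary; what is needed is the passage of \emph{irreducibility} from $\widetilde{X}$ down to $X$. Your handling of the finite-$\pi_1(Z\setminus\Sigma)$ case via virtual freeness of amalgams of finite groups is a valid alternative to the paper's citation of Baumslag.
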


\begin{proof}
Let $\Sigma$ be a symplectic submanifold of $(Z, \omega)$, so it is
oriented by $\omega$. Then $\overline{\Sigma}$, which denotes the
oppositely oriented surface $\Sigma$, is symplectic with respect to
the symplectic form $-\omega$ on $Z$. We can thus take a
\emph{symplectic normal connected sum} \cite{gompf:fibersum} of $(Z, \omega)$ and
$(Z, -\omega)$ along $\Sigma$ and $\overline{\Sigma}$ to get a
symplectic four-manifold $(\widetilde{X}, \widetilde{\omega})$, where
$\widetilde{\omega}$ restricts to $\pm \omega$ on each copy of $Z
\setminus \nu \Sigma$ as claimed.

If $Z \setminus \nu \Sigma$ does not contain any exceptional spheres,
then $\widetilde{X}$ is minimal by
\cite[Theorem~1.1]{usher:minimality}.
When $\pi_1(\widetilde{X})$ is residually finite, $\widetilde{X}$ is
also smoothly irreducible by
\cite[Corollary~1.4]{hamilton-kotschick}. Any amalgamated free product
of finite groups is residually finite by \cite[Theorem~2]{baumslag},
so by the Seifert-Van Kampen theorem, $\pi_1(\widetilde{X})$ is
residually finite when $\pi_1(Z \setminus \nu \Sigma)$ is finite.
Minimality or irreducibility of the cover $\widetilde{X}$ then
implies the same for $X$.
\end{proof}

We will often refer to the four-manifold $Z$ or $(Z, \omega)$, together
with the surface $\Sigma \subset Z$ as the \emph{inputs} of this
equivariant normal connected sum construction, and the quotient four-manifold $X=
\widetilde{X} / \tau$ as the \emph{output}.

\begin{remark}
There are other gluing maps one can employ in our $\Z_2$--equivariant
normal connected sum construction, by replacing the orientation-reversing
self-diffeomorphism $\textrm{id}_{S^1} \x \phi$ of $S^1 \x \Sigma_g$
we had with $a\x \psi$, where $a$ is the antipodal map on $S^1$ and
$\psi$ is \emph{any} orientation-reversing diffeomorphism of
$\Sigma_g$. In particular, we can take $\psi$ to be a diffeomorphism
of $\Sigma_{2k}=\Sigma_k^1 \cup \Sigma_k^1$ that switches the two
subsurfaces $\Sigma_k^1$ by a reflection along their identified
boundaries.  It is straightforward to see that all the lemmas above
also apply in this case.
\end{remark}

\smallskip
\subsection{A $\Z_2$--equivariant construction via circle sum} \label{sec:circlesum} \

We present another general construction to derive smooth four-manifolds, as
quotients of four-manifolds with non-trivial Seiberg-Witten invariants
under free involutions, this time via the \emph{circle sum} operation.

Let $\widetilde{X}$ be a closed, simply connected, oriented, smooth
four-manifold with an orientation-preserving, free involution
$\tau\colon \widetilde{X} \to \widetilde{X}$.
As before, let $q\colon
\widetilde{X} \to X={\widetilde {X}}/\tau$ be the induced double
cover. Take a lift $\alpha$ of a simple closed curve generating
$\pi_1(X) = \Z_2$, so $\gamma:=\alpha \cup \tau(\alpha)$ is a
$\tau$--equivariant simple closed curve in $\widetilde{X}$.  Let $\nu
(\gamma) \subset X$ be a $\tau$--invariant tubular neighborhoood of
$\gamma$; that is, there is an identification $\nu(\gamma) \cong S^1
\x D^3 \subset \C \x D^3$, under which the $\tau$ action is given by
$(t, x) \mapsto (-t, x)$.

Let $Y$ be a closed, oriented three-manifold. The product four-manifold
$S^1 \times Y$ admits an orientation-preserving free involution
$\tau'(t,x)=(-t, x)$. For \mbox{$\gamma':=S^1 \x \{y_0\}$} in $S^1 \x Y$, let
$\nu(\gamma')= S^1 \x D \subset S^1 \x Y$ be a tubular neighborhood of
$\gamma'$ for some embedded $3$--disk $D$ centered at $y_0 \in
Y$. Then $\nu(\gamma')$ is $\tau'$--invariant, and the restriction of
$\tau'$ to $\nu(\gamma') \cong S^1 \x D^3$, under this identification,
is also given by $(t, x) \mapsto (-t, x)$.

We take the circle sum of $(\widetilde{X}, \gamma)$ and $(S^1 \times
Y, \gamma')$ by a fiber-preserving diffeomorphism $\partial
\nu(\gamma) \to \partial \nu(\gamma')$. We get a free involution
$\widetilde{\tau}$ on the circle sum $ \widetilde{X}
\#_{\gamma=\gamma'} (S^1 \x Y)$, which extends the involutions $\tau$
and $\tau'$ on the summands.
Because $\gamma \subset \widetilde{X}$ is null-homotopic (as
$\widetilde{X}$ is simply connected), we have that
$ \widetilde{X} \#_{\gamma=\gamma'} (S^1 \x Y)$
is diffeomorphic to the connected sum
$ \widetilde{X} \# S(Y)$, where $S(Y):=(S^1 \x Y\setminus D)
\cup_{\textrm{id}} (D^2 \x S^2)$ is the spun four-manifold of $Y$. Note that we have
$\pi_1(\widetilde{X} \# S(Y))\cong \pi _1 (S(Y))\cong \pi_1(Y)$.

We define the closed, oriented, smooth four-manifold $X_Y$ as
$\widetilde{X}\# S(Y) / \widetilde{\tau}$.

\begin{lemma}\label{lem:circlesum-topology}
The fundamental group of $X_Y$ is given by
\[ \pi_1(X_Y) = \pi_1(Y) \x \Z_2 \, ,\]
and its Euler characteristic and the signature are
\[ \chi(X_Y)= \frac{\chi(\widetilde{X})}{2} =\chi (X)\ \text{ and } \
\sigma(X_Y)=\frac{\sigma(\widetilde{X})}{2} =\sigma (X) \, .\]
\end{lemma}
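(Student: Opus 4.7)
The plan is to establish the three assertions---on $\pi_1$, $\chi$, and $\sigma$---independently, exploiting the double cover $q\colon \widetilde{X}\,\#\,S(Y)\to X_Y$ together with the circle-sum decomposition $X_Y=(X\setminus \nu(\alpha))\cup_{\partial}(S^1\times (Y\setminus D^3))$, glued along $\partial \nu(\alpha)\cong S^1\times S^2$. Here $\alpha:=\gamma/\tau$ is the loop in $X$ representing the generator of $\pi_1(X)=\Z_2$, and $\nu(\alpha)\cong S^1\times D^3$ since the action of $\tau$ on $\nu(\gamma)\cong S^1\times D^3$ is by the antipodal map on the $S^1$ factor and trivial on $D^3$.

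For $\chi$ and $\sigma$, I will use multiplicativity under finite covers, which gives $\chi(X_Y)=\tfrac12\chi(\widetilde{X}\,\#\,S(Y))$ and $\sigma(X_Y)=\tfrac12\sigma(\widetilde{X}\,\#\,S(Y))$, together with additivity of these invariants under connected sum of closed 4-manifolds. This reduces the two equalities to showing $\chi(S(Y))=2$ and $\sigma(S(Y))=0$. The first follows from a Mayer-Vietoris applied to $S(Y)=(S^1\times (Y\setminus D^3))\cup(D^2\times S^2)$: the $S^1$-product contributes $0$, the $D^2\times S^2$ piece contributes $2$, and the gluing locus $S^1\times S^2$ contributes $0$. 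The second follows because $S(Y)=\partial(D^2\times (Y\setminus D^3))$ bounds a compact oriented 5-manifold, so its signature vanishes.

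For $\pi_1$, I would first establish that $\pi_1(X\setminus \nu(\alpha))\cong \Z_2$, with the longitude of $\alpha$ representing the generator. A Van Kampen argument applied to $X=(X\setminus \nu(\alpha))\cup \nu(\alpha)$ yields $\pi_1(X)=\pi_1(X\setminus \nu(\alpha))*_{\Z}\Z$, and the key dimension-$4$ feature is that $\pi_1(\partial \nu(\alpha))=\pi_1(S^1\times S^2)=\Z$ is generated solely by the longitude (the meridional $S^2$ being simply connected), and it maps isomorphically onto $\pi_1(\nu(\alpha))=\Z$; thus the amalgam collapses to $\pi_1(X\setminus \nu(\alpha))$, so this equals $\pi_1(X)=\Z_2$. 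A second application of Van Kampen to the circle-sum decomposition of $X_Y$ then produces
$$\pi_1(X_Y)=\Z_2\,*_{\Z}\,(\Z\times \pi_1(Y)),$$
where the amalgamating $\Z$ (the longitude) maps to the generator of $\Z_2$ on one side and to the $\Z$-factor of $\Z\times \pi_1(Y)$ on the other. Identifying these two generators forces the $\Z$-factor generator to have order $2$, and since this generator already commutes with $\pi_1(Y)$ inside $\Z\times \pi_1(Y)$, the amalgam simplifies to the direct product $\pi_1(Y)\times \Z_2$, as required.

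The main obstacle will be the fundamental group computation, specifically tracking the longitudinal $\Z$-subgroup through both Van Kampen steps and verifying that the extension $1\to \pi_1(Y)\to \pi_1(X_Y)\to \Z_2\to 1$ splits \emph{as a direct product} rather than as a potentially nontrivial semidirect product; the payoff of the circle-sum description is precisely that the amalgamating longitude commutes with $\pi_1(Y)$ in $\Z\times \pi_1(Y)$, which is what yields commutativity in $X_Y$. The $\chi$ and $\sigma$ computations should then follow routinely.
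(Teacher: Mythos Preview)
Your argument is correct. The $\chi$ and $\sigma$ computations are essentially the same as the paper's---both use multiplicativity under the double cover together with the vanishing of $\chi$ and $\sigma$ on the $S^1\times Y$ piece---just with slightly different bookkeeping (you phrase it via the connected sum $\widetilde{X}\#S(Y)$ and compute $\chi(S(Y))=2$, $\sigma(S(Y))=0$; the paper works directly with the circle-sum decomposition).

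For $\pi_1$, however, you take a genuinely different route. The paper argues upstairs: from the double cover $q_1\colon \widetilde{X}\#S(Y)\to X_Y$ one has the exact sequence $1\to\pi_1(Y)\to\pi_1(X_Y)\to\Z_2\to 1$, which is split by a loop $\gamma_1$ coming from an arc in $\widetilde{X}$; the conjugation action of $\gamma_1$ on $\pi_1(Y)$ is then checked to be trivial by the explicit formula $\gamma_1\beta\gamma_1^{-1}=q_1(\alpha_1\tilde\beta\,\tau(\alpha_1))=q_1(\tilde\beta)=\beta$, using that $\widetilde{\tau}$ acts trivially on the $Y$--factor. You instead work downstairs, first identifying $\pi_1(X\setminus\nu(\alpha))\cong\Z_2$ by a Van Kampen argument on $X$, and then computing the pushout $\Z_2 *_\Z (\Z\times\pi_1(Y))$ for the decomposition $X_Y=(X\setminus\nu(\alpha))\cup(S^1\times(Y\setminus D^3))$. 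Your approach is arguably more elementary and avoids tracking the involution, with the key point---that the amalgamating longitude already commutes with $\pi_1(Y)$ inside $\Z\times\pi_1(Y)$---substituting for the paper's conjugation check. The paper's approach, on the other hand, generalizes more transparently to situations where the $\Z_2$ action on the normal subgroup is nontrivial, as in the dihedral computation of $\S\ref{sec:fpp-2}$. (One minor point: your ``amalgam'' $\Z_2*_\Z(\Z\times\pi_1(Y))$ is strictly a pushout rather than an amalgamated free product, since $\Z\to\Z_2$ is not injective; the computation you give is the correct pushout computation.)
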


\begin{proof}
Let $q_1\colon \widetilde{X}\# S(Y) \to X_Y$ be the corresponding double
cover. The induced short exact sequence
\[ 
1 \longrightarrow \pi_1(\widetilde{X} \# S(Y)) \overset{{q_1}_*}{\longrightarrow} \pi_1(X_Y) \longrightarrow \Z_2 \longrightarrow 1 
\]
splits on the right by the homomorphism $s\colon \Z_2 \to \pi_1(X_Y)$
defined by $s(1)=\gamma_1$, where 
$\gamma_1=q_1(\alpha_1)$ for some
push-off $\alpha_1$ of the arc $\alpha \subset \widetilde{X}\# S(Y)$ to
$\partial \nu(\gamma)$ so that \mbox{$\alpha_1 \cup \tau(\alpha_1)$} is a
loop doubly covering $\gamma_1\subset X_Y$.  It follows that $\pi_1(X_Y)$ is a
semi-direct product $\pi_1(\widetilde{X}\# S(Y)) \rtimes \Z_2$, and in
fact, the action of $\Z_2$ on ${q_1}_*(\pi_1(\widetilde{X}\# S(Y)))
\triangleleft
\pi_1(X_Y)$ is trivial: for every $\beta$ in
${q_1}_*(\pi_1(\widetilde{X}\# S(Y)))={q_1}_*(\pi_1(Y))$, we have
$\gamma_1 \beta \gamma_1^{-1} =q_1 (\alpha_1 \tilde{\beta}
\,\tau(\alpha_1))= q_1(\tilde{\beta})= \beta$, where $\tilde{\beta}$
is the lift of $\beta$. Hence
\[
\pi_1(X_Y )= \pi_1(\widetilde{X}\# S(Y)) \x \Z_2=\pi_1(Y) \x \Z_2.
\]

We have $\chi(S^1 \x Y)=\sigma(S^1 \x Y)=0$ and $\chi(\nu
\gamma')=\sigma(\nu \gamma')=0$. Therefore, from the decomposition
$(\widetilde{X} \,\setminus \nu (\gamma)) \cup (S^1 \x Y\, \setminus
\nu (\gamma'))$, we get $\chi(\widetilde{X}\#
S(Y))=\chi(\widetilde{X})$ and $\sigma(\widetilde{X}\#
S(Y))=\sigma(\widetilde{X})$. We then get $\chi(X_Y )=
\frac{\chi(\widetilde{X})}{2}=\chi (X)$ and
$\sigma(X_Y )=\frac{\sigma(\widetilde{X})}{2}=\sigma (X)$, once again by the
multiplicativity of the Euler characteristic and the signature for
finite covers.
\end{proof}

We are going to refer to the four-manifold with involution
$(\widetilde{X}, \tau)$ and the three-manifold $Y$ as the
\emph{inputs} of this equivariant circle sum construction, and the
resulting quotient four-manifold $X_Y = (\widetilde{X}
\#_{\gamma=\gamma'} (S^1 \x Y)) / \widetilde{\tau}$ as the
\emph{output}.

\smallskip
\subsection{Torus surgeries} \label{sec:SWandtorus} \

We briefly review surgeries along tori in four-manifolds and their
effect on the Seiberg-Witten invariants.  See also
\cite{fintushel-stern:reverse, stipsicz-szabo:definite} for a more
complete discussion.

Let $X$ be a smooth four-manifold and $T\subset X$ be an oriented,
embedded torus with trivial normal bundle.  A small disk neighborhood
$\nu T$ is diffeomorphic to $T^2\times D^2$; let $\eta\colon \nu T \to
T^2\times D^2$ be a framing, that is, a trivialization of this normal
bundle.  An oriented loop $\lambda \subset T$ lifts to a push-off
$\lambda _{\eta} \subset \partial ( \nu T)$ via $\eta$.  As customary,
we let $\mu _T$ denote the oriented meridian of $T$, viewed also in $\partial (\nu T)$.

Having these data fixed, for any $\frac{p}{q} \in \Q \cup \{\infty\}$
we can define a new four-manifold $X':=(X, T,\eta, \lambda ,
\frac{p}{q})$ as the result of  \emph{$\frac{p}{q}$-surgery on
$X$ along $T$ with framing} $\eta$, \emph{surgery curve} $\lambda$ and
\emph{surgery coefficient } $\frac{p}{q}$ as follows: we remove ${\rm
  {int}}\, \nu T$ from $X$ and glue back $T^2\times D^2$ by a
diffeomorphism $\varphi$ which maps the circle $\partial (\{ \text{pt}
\} \x D^2) \subset \partial (T^2\times D^2)$ to a simple closed curve
representing the homology class $p[\mu _T] + q [\lambda_\eta]$ in
$H_1(\partial (\nu T); \Z)$, which is unique up isotopy.  One easily
concludes by the Seifert-Van Kampen theorem that (choosing the base point on $\partial (\nu T)$) we have
\begin{equation*}
\pi_1(X') = \pi_1(X \, \setminus \nu T) \, /\, 
\langle [\mu_{T}]^p [\lambda_\eta]^q =1\rangle.
\end{equation*}

This generalization of Dehn surgery on three-manifolds to the
four-dimensional setting is called \emph{torus surgery}.  If $X$
admits a symplectic form $\omega$ and $T\subset (X, \omega )$ is a
Lagrangian torus, then the torus $T$ admits a canonical framing
$\eta_L$, called the \emph{Lagrangian framing}, distinguished by the
property that images of $T^2\times \{ \text{pt}\}\subset T^2\times
D^2$ under this framing are all Lagrangian in $(X, \omega)$.  For any
$k \in \Z$, a \mbox{$\frac{1}{k}$--surgery} on $(X, \omega)$ along a
Lagrangian torus $T$, taken with the Lagrangian framing, for
\emph{any} surgery curve yields another symplectic four--manifold
$(X', \omega')$, where $\omega'$ agrees with $\omega$ away from $T$
\cite{ADK, Luttinger}. This particular type of torus surgery is called a
\emph{Luttinger surgery}.

When we perform torus surgery along a framed torus $T \subset X$, we
will often use the shorthand notation $(T, \lambda, \frac{p}{q})$
suppressing the four-manifold and the framing from the
notation. Whenever we spell out that $T$ is Lagrangian and we are to
perform a Luttinger surgery, the framing we take is understood to be
the Lagrangian framing.  At times, we will denote the four-manifold
obtained by torus surgeries along Lagrangian tori $T_1, \ldots, T_n$
in a four--manifold $X$ with specified surgery curves $\lambda_1,
\ldots, \lambda_n$ by $X(m_1, \ldots, m_n)$, where $m_i$ denote the
surgery coefficients.

\smallskip
\subsection{Seiberg-Witten invariants} \label{sec:SW} \

The \emph{Seiberg-Witten} function of a closed,
oriented, smooth four-manifold $X$ is a map
\[
\SW_X\colon Spin^c(X) \to \Z ,
\]
where $Spin^c(X)$ denotes the set of spin$^c$ structures on $X$.  For
four-manifolds having no $\Z_2$--torsion
in $H_1(X; \Z )$, the set of their spin$^c$
structures can be identified with characteristic
cohomology classes in $H^2(X; \Z)$ through their first Chern class.

The Seiberg-Witten value $\SW_X( {\mathfrak {s}})$  counts solutions of a partial differential equation associated
to the  spin$^c$ structure ${\mathfrak {s}}$, a metric on $X$, and a
perturbation of the partial differential equation.
If the four-manifold $X$ with $b_1(X)=0$ satisfies  $b_2^+(X)>1$,
then $\SW_X({\mathfrak {s}})$ can
be shown to be independent of the chosen metric and perturbation.
In this case, the map $\SW_X$ is (up to sign) a diffeomorphism
invariant, that is, for a diffeomorphism $f\colon X_1\to X_2$, we have
\[
\SW_{X_2}({\mathfrak {s}})=\pm \SW_{X_1}(f^*({\mathfrak {s}}))
\]
for every ${\mathfrak {s}} \in Spin^c(X_2)$. If $X$ has $b_2^+(X)=1$,
then the metrics and perturbations are partitioned into chambers, and
the value of the Seiberg-Witten invariant on a spin$^c$ structure may
depend on the chosen chamber.

A cohomology class $K\in H^2 (X; \Z )$ is a Seiberg-Witten \emph{basic
class} if there is ${\mathfrak {s}} \in Spin^c(X)$ so that
$c_1({\mathfrak {s}})=K$ and $\SW_X({\mathfrak {s}})\neq 0$.  For a
fixed four-manifold $X$ there are finitely many spin$^c$ structures
with non-vanishing Seiberg-Witten invariants, hence a given $X$ admits
finitely many basic classes. The four-manifold $X$ is of
Seiberg-Witten \emph{simple type} if for any basic class $K$, the
equality $K^2=2\chi (X) +3\sigma (X)$ holds.

The Seiberg-Witten function $\SW_X$ has finite support and admits the
symmetry $\SW_X({\overline {\mathfrak {s}}})=\pm \SW_X({\mathfrak
  {s}})$, where ${\overline {\mathfrak {s}}}$ denotes the spin$^c$
structure conjugate to ${\mathfrak {s}}$. In addition, the support of
$\SW_X$ is connected to the geometry of the underlying four-manifold
through the \emph{adjunction inequality}:
\begin{proposition}(\cite{KMcikk, KMkonyv}) \label{prop:adjunction}
Let $\Sigma \subset X$ be an embedded, closed, oriented surface of genus
$g(\Sigma ) \geq 1$ and self-intersection $[\Sigma ]^2 \geq 0$. For any basic class $K\in H^2(X; \Z )$
of the four-manifold $X$ with $b_2^+(X)>1$ we have
\[
2g(\Sigma )-2\geq [\Sigma ]^2+\vert K([\Sigma ])\vert .
\]
\end{proposition}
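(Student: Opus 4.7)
The plan is to follow the standard Kronheimer--Mrowka scheme: first reduce to the case $[\Sigma]^2 = 0$ via blowing up, and then convert a neck-stretching limit of Seiberg--Witten monopoles into a curvature bound on $\Sigma$.

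For the reduction, suppose $n := [\Sigma]^2 > 0$ and blow up $X$ at $n$ distinct points of $\Sigma$ to obtain $\widetilde{X} = X \# n\,\CPb$ with exceptional classes $E_1, \ldots, E_n$. The proper transform $\widetilde{\Sigma}$ has the same genus as $\Sigma$, represents $[\Sigma] - E_1 - \cdots - E_n$, and thus has self-intersection zero. The blow-up formula for Seiberg--Witten invariants shows that for each choice of signs $\varepsilon_i \in \{\pm 1\}$, the class $\widetilde{K}_\varepsilon := K + \sum_i \varepsilon_i E_i$ is a basic class of $\widetilde{X}$, which still satisfies $b_2^+ > 1$. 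Applying the $[\widetilde{\Sigma}]^2 = 0$ case of the inequality to the two sign choices that maximize $|\widetilde{K}_\varepsilon([\widetilde{\Sigma}])| = |K([\Sigma])| + n$ then gives $2g - 2 \geq |K([\Sigma])| + [\Sigma]^2$, as claimed.

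To handle the $[\Sigma]^2 = 0$ case, I would fix a metric of constant scalar curvature $s_\Sigma \leq 0$ on $\Sigma$ (negative if $g \geq 2$, flat if $g = 1$), put the product metric on a tubular neighborhood $\Sigma \times D^2$, and stretch a collar $\Sigma \times S^1 \times [-T, T]$ along $\partial(\nu \Sigma)$. For every $T$, the hypotheses $b_2^+(X) > 1$ and $\SW_X(\mathfrak{s}) \neq 0$ furnish a monopole $(A_T, \Phi_T)$ representing $\mathfrak{s}$. The four-dimensional Weitzenb\"ock identity combined with the maximum principle yields a uniform pointwise bound $|\Phi_T|^2 \leq \max(0, -s)$ and hence uniform $L^2$ bounds on $F^+_{A_T}$. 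Standard compactness for finite-energy monopoles on long cylinders extracts a subsequential limit which is translation-invariant along $\Sigma \times S^1 \times \R$ and descends to a solution of the dimensionally reduced vortex equations on a line bundle $L \to \Sigma$ with $c_1(L) = K|_\Sigma$. Chern--Weil theory applied to the moment-map component of the vortex equations then bounds $|K([\Sigma])|$ by $(2\pi)^{-1}\int_\Sigma |s|\, dA$, which after the appropriate metric normalization equals $2g - 2$.

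The main analytic obstacle lies in the compactness step: one must show that the neck-stretching limit is not identically reducible---i.e., that $\Phi_T$ does not vanish in the limit along the neck---for otherwise the dimensional reduction is vacuous. This is where $\SW_X(\mathfrak{s}) \neq 0$ enters essentially, through a lower bound on the total $L^2$ energy of the spinor that must be accommodated on the neck; the condition $b_2^+(X) > 1$ ensures that the Seiberg--Witten invariant is chamber-independent so this energy bound is metric-robust. The borderline case $g = 1$, where $s_\Sigma \equiv 0$ and the pointwise Weitzenb\"ock estimate degenerates, needs an additional small metric perturbation (or a direct Seiberg--Witten computation on $T^3 \times \R$) to rule out reducibles, as carried out in \cite{KMkonyv}.
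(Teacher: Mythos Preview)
The paper does not give a proof of this proposition at all: it is stated with attribution to \cite{KMcikk, KMkonyv} and used as a black box, so there is nothing to compare your argument against. Your sketch is the standard Kronheimer--Mrowka route one finds in those references (blow-up reduction to square zero, then neck-stretching to a three-dimensional/vortex limit), and the broad outline is sound.

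A couple of small points of caution in your sketch, since the paper offers no guidance. In the reduction step your computation is right, but note that you only need a \emph{single} sign choice $\varepsilon_i = \operatorname{sgn}(K([\Sigma]))$ (or all $+1$ if $K([\Sigma])=0$), not two. In the neck-stretching step, the phrase ``$c_1(L)=K|_\Sigma$'' is imprecise: the line bundle arising from the dimensional reduction has degree $\tfrac{1}{2}\langle c_1(\mathfrak{s}),[\Sigma]\rangle$, and it is the nonnegativity of the degree of the \emph{vortex} line bundle (from existence of a nonzero holomorphic section) together with its conjugate that yields the two-sided bound. Finally, your worry about the reducible limit is well placed but the resolution is slightly different from what you describe: the argument does not need a lower energy bound on the spinor per se, but rather uses that a reducible limiting flat connection on $\Sigma\times S^1$ forces $K([\Sigma])=0$, in which case the inequality $2g-2\geq 0$ is automatic from $g\geq 1$.
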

If the four-manifold $X$ with $b_2^+(X)>1$
admits a symplectic structure $\omega$,
which naturally induces a spin$^c$ structure with Chern class $c_1(X, \omega)$, then by \cite{taubes},
we have
\begin{equation}\label{eq:symp}
  \SW_X(\pm c_1(X, \omega ))=\pm 1 . \, 
\end{equation}
Symplectic four-manifolds with $b_2^+(X)>1$ are of simple type \cite{taubes}.

A simple numerical invariant of smooth four-manifolds can be defined
by taking the maximal Seiberg-Witten value over the 
spin$^c$ structures:
\[
M_{\SW}(X)=\max \{ \vert \SW_X(\s )\vert \mid \s \in {\rm {Spin}}^c(X)\} .
\]
$M_{\SW}(X)$ is a diffeomorphism invariant of $X$ when $b_2^+(X)>1$.
If the four-manifold $X$ has $b_2^+(X)=1$, a similar invariant can be
defined.  In this case, however, the value of the Seiberg-Witten
function might depend on the chosen metric/perturbation. The space of
these choices partition into \emph{chambers} on which the value of
$SW_X$ is constant, and the wall-crossing formula determines the
change in the value when we traverse from one chamber to another.  For
a chamber ${\mathfrak {c}}$ and metric/perturbation $(g, \delta )$
representing that chamber, we define the value
\[
M_{\SW}^{\mathfrak {c}}(X)=\max \{ \vert \SW_X(\s , g, \delta )\mid \s \in
{\rm {Spin}}^c(X) .\}
\]
The wall-crossing formula implies that the maximum $M_{\SW}(X)$ of
$M_{\SW}^{\mathfrak {c}}(X)$ for all chambers is a diffeomorphism
invariant of $X$.

\smallskip
Below, we quickly review how Seiberg-Witten invariants interact with
various topological constructions of four-manifolds.

The invariants of connected sums are determined by the Seiberg-Witten
invariants of the summands. Assume that $X=X_1\#X_2$. If
$b_2^+(X_i)>0$ for $i=1,2$ then $\SW_X\equiv 0$. Assume therefore that
$b_2^+(X_2)=0$ and for simplicity let us also assume that $b_1
(X_2)=0$.  By Donaldson's diagonalizability theorem, the intersection
form $Q_{X_2}$ is isomorphic to $b_2(X_2)\langle -1\rangle$, that is,
$X_2$ is a rational homology $\#_b \CPb$.  Suppose that the second
homology classes providing the basis of $H_2(X_2; \Z )/Tor$ on which
the matrix is diagonal are denoted by $\{ e_1, \ldots, e_b\}$, where
$b=b_2(X_2)$.
In this case, the \emph{blow-up formula} relates the Seiberg-Witten
invariants of $X$ and $X_1$:

\begin{proposition}[\cite{FSImmersed}] \label{prop:blowup-SW}
  Suppose that ${\mathfrak {s}}_i\in Spin^c (X_i)$ are spin$^c$ structures
  on $X_1$ and $X_2$, and
  let $n_1, \ldots , n_b$ be  non-negative integers
  given by $\langle c_1({\mathfrak {s}}_2), e_i\rangle =2n_i+1$.
  Assume, furthermore,
that these integers are constrained by
\[
\frac{1}{4}(c_1({\mathfrak {s}}_1)^2-2\chi (X_1)-3\sigma (X_1))-\sum
_{i=1}^b n_i(n_i+1) \geq 0.
\]
Then
\[
\SW_{X_1}({\mathfrak {s}}_1)=\pm \SW_X({\mathfrak {s}}_1\# {\mathfrak {s}}_2).
\]
\end{proposition}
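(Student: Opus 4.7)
The plan is to establish the formula by a neck-stretching argument across the connected-sum neck, combined with the standard (single) blow-up formula applied iteratively. First I would fix a Riemannian metric on $X=X_1\#X_2$ with a long cylindrical neck $[0,L]\times S^3$ separating $X_1\setminus B^4$ from $X_2\setminus B^4$, and study solutions to the Seiberg--Witten equations for $\mathfrak{s}_1\#\mathfrak{s}_2$ as $L\to\infty$. The standard compactness and gluing theory shows that such solutions converge to pairs of finite-energy solutions on the two cylindrical-end pieces, matched along the unique reducible solution for the unique spin$^c$ structure on $S^3$; in particular no irreducible boundary contribution arises on the neck.

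Next I would analyze the $X_2$--contribution. Since $b_1(X_2)=b_2^+(X_2)=0$, Donaldson's theorem forces $Q_{X_2}\cong b\langle -1\rangle$ on the given basis $\{e_1,\dots,e_b\}$, so $c_1(\mathfrak{s}_2)^2=-\sum(2n_i+1)^2$. A direct computation using $\chi(X_2)=2+b$ and $\sigma(X_2)=-b$ yields
\[
\tfrac{1}{4}\bigl(c_1(\mathfrak{s}_1\#\mathfrak{s}_2)^2-2\chi(X)-3\sigma(X)\bigr)=\tfrac{1}{4}\bigl(c_1(\mathfrak{s}_1)^2-2\chi(X_1)-3\sigma(X_1)\bigr)-\sum_{i=1}^{b}n_i(n_i+1),
\]
so the hypothesized inequality is precisely the statement that the expected dimension of the moduli space on $X$ is non-negative. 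Under this constraint, because $b_2^+(X_2)=0$ and $b_1(X_2)=0$, the relevant harmonic obstruction vanishes and the moduli contribution from $X_2\setminus B^4$ can be shown to depend only on the negative-definite intersection data $(e_i^2,\langle c_1(\mathfrak{s}_2),e_i\rangle)$, not on the particular smooth rational-homology structure on $X_2$.

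With this insensitivity in hand, I would replace $X_2$ by the model $\CPb\#\cdots\#\CPb$ (with the $e_i$ as exceptional classes) without changing the $\SW$--count on $X$, and then iteratively apply the classical single-blow-up formula: if $\langle c_1(\mathfrak{s}_E),E\rangle=\pm(2n+1)$, then $\SW_{Y\#\CPb}(\mathfrak{s}_Y\#\mathfrak{s}_E)=\pm\SW_Y(\mathfrak{s}_Y)$ up to the corresponding dimension shift. Peeling off one $\CPb$--summand at a time, and keeping track of signs via the exceptional-class weights $n_i$, delivers the claimed identity $\SW_{X_1}(\mathfrak{s}_1)=\pm\SW_X(\mathfrak{s}_1\#\mathfrak{s}_2)$. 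The hardest step will be making the insensitivity of the $X_2$--contribution to the smooth structure on $X_2$ fully rigorous, since the statement allows $X_2$ to be any rational homology $\#_b\CPb$ rather than the model itself; this requires a careful treatment of reducibles on the cylindrical-end negative-definite piece together with the neck-stretching compactness and gluing estimates.
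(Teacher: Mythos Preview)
The paper does not give its own proof of this proposition: it is stated with a citation to \cite{FSImmersed} and used as a black box. So there is nothing in the paper to compare your argument against; the authors simply import the result.

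That said, your sketch follows the standard route one finds in the literature for this statement. The dimension computation you wrote down is correct and is indeed the meaning of the numerical hypothesis. The reduction to the model $\#_b\CPb$ followed by iteration of the single blow-up formula is exactly how the general case is usually handled. You correctly flag the delicate point: the claim that the $X_2$--side contribution depends only on the intersection data, and not on the smooth structure of the rational homology $\#_b\CPb$, is the step that requires genuine work (analysis of reducibles on the negative-definite cylindrical-end piece and the associated gluing/obstruction theory). If you intend this as a self-contained proof rather than a citation, that step needs to be spelled out; as written it is asserted rather than argued. For the purposes of this paper, however, a citation suffices, and that is all the authors provide.
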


A simple consequence of the above formula is that
\begin{corollary}\label{cor:BlowUpDistinction}
If two homeomorphic smooth four-manifolds $X_i$, $i=1,2$, 
with $b_2^+>1$ are  distinguished by $M_{\SW}(X_1)\neq M_{\SW}(X_2)$, then
the same holds for the pair $X_i\# \ell \CPb$, $i=1,2$, for any $\ell \in \N$,
that is, then
\[
M_{\SW}(X_1\# \ell \CPb )\neq M_{\SW}(X_2\# \ell \CPb).
\]
\end{corollary}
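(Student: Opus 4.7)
The plan is to reduce the statement to a single assertion: for any closed, oriented, smooth four-manifold $X$ with $b_2^+(X)>1$, the maximal Seiberg-Witten invariant is preserved under blow-ups, i.e.,
\[
M_{\SW}(X \# \ell\,\CPb) = M_{\SW}(X) \quad \text{for every } \ell \in \N.
\]
Once established, the corollary is immediate: applying the equality to $X_1$ and $X_2$ separately and using the hypothesis $M_{\SW}(X_1) \neq M_{\SW}(X_2)$, one concludes $M_{\SW}(X_1 \# \ell\,\CPb) \neq M_{\SW}(X_2 \# \ell\,\CPb)$.

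To establish the inequality $M_{\SW}(X\#\ell\,\CPb) \geq M_{\SW}(X)$, I take any basic class $\mathfrak{s}_1 \in \Spin^c(X)$ attaining the maximum $M_{\SW}(X)$, and choose $\mathfrak{s}_2 \in \Spin^c(\ell\,\CPb)$ with $\langle c_1(\mathfrak{s}_2), e_i \rangle = 1$, so $n_i = 0$ for all $i = 1, \dots, \ell$. Under this choice, the constraint in Proposition~\ref{prop:blowup-SW} reads $c_1(\mathfrak{s}_1)^2 - 2\chi(X) - 3\sigma(X) \geq 0$, which is precisely the non-negativity of the expected dimension of the Seiberg-Witten moduli space associated to $\mathfrak{s}_1$ on $X$. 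This non-negativity is automatic since $\SW_X(\mathfrak{s}_1) \neq 0$ when $b_2^+(X) > 1$. Proposition~\ref{prop:blowup-SW} then yields $|\SW_{X\# \ell\,\CPb}(\mathfrak{s}_1 \# \mathfrak{s}_2)| = |\SW_X(\mathfrak{s}_1)| = M_{\SW}(X)$.

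For the reverse inequality $M_{\SW}(X\#\ell\,\CPb) \leq M_{\SW}(X)$, let $\mathfrak{s} \in \Spin^c(X\# \ell\,\CPb)$ be any spin$^c$ structure with $\SW_{X\#\ell\,\CPb}(\mathfrak{s}) \neq 0$. Decompose $\mathfrak{s} = \mathfrak{s}_1 \# \mathfrak{s}_2$ and, after applying conjugation symmetry to the second factor if needed, assume $\langle c_1(\mathfrak{s}_2), e_i \rangle = 2n_i + 1$ with $n_i \geq 0$. Using $\chi(X \# \ell\,\CPb) = \chi(X) + \ell$, $\sigma(X \# \ell\,\CPb) = \sigma(X) - \ell$, and the identity $\sum_i (2n_i+1)^2 = 4\sum_i n_i(n_i+1) + \ell$, a short calculation gives
\[
\tfrac{1}{4}\bigl(c_1(\mathfrak{s})^2 - 2\chi(X \# \ell\,\CPb) - 3\sigma(X \# \ell\,\CPb)\bigr) = \tfrac{1}{4}\bigl(c_1(\mathfrak{s}_1)^2 - 2\chi(X) - 3\sigma(X)\bigr) - \sum_i n_i(n_i+1).
\]
Since $\SW_{X\#\ell\,\CPb}(\mathfrak{s}) \neq 0$ forces the left-hand side (the expected dimension on the blow-up) to be non-negative, the hypothesis of Proposition~\ref{prop:blowup-SW} is satisfied and the formula gives $|\SW_{X\#\ell\,\CPb}(\mathfrak{s})| = |\SW_X(\mathfrak{s}_1)| \leq M_{\SW}(X)$, as desired.

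There is no serious obstacle here; the only subtle point is the arithmetic identity above, which shows that the seemingly ad-hoc non-negativity constraint in Proposition~\ref{prop:blowup-SW} is nothing other than the requirement that the Seiberg-Witten moduli space on the blown-up four-manifold has non-negative expected dimension. This observation lets the blow-up formula be used to pass basic classes in both directions, giving equality of the invariants $M_{\SW}$ before and after blow-ups and completing the proof.
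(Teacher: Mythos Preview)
Your proof is correct and proves the stronger statement $M_{\SW}(X\#\ell\,\CPb)=M_{\SW}(X)$, from which the corollary follows immediately. The paper itself gives no argument at all---it simply asserts the corollary as ``a simple consequence'' of Proposition~\ref{prop:blowup-SW}---so you have supplied exactly the details the paper omits, via the intended route.

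One phrasing to tighten: ``applying conjugation symmetry to the second factor'' is not quite the right justification, since conjugation of spin$^c$ structures is a global operation and would alter $\mathfrak{s}_1$ as well. What you actually want is either (i) the self-diffeomorphism of each $\CPb$ summand given by complex conjugation, which sends $e_i\mapsto -e_i$ and therefore lets you flip the sign of any individual $\langle c_1(\mathfrak{s}_2),e_i\rangle$ while leaving $\mathfrak{s}_1$ untouched; or equivalently (ii) the observation that the diagonalizing basis $\{e_i\}$ is only determined up to signs, so one may choose the signs so that every evaluation is positive; or (iii) simply note that the constraint in Proposition~\ref{prop:blowup-SW} is invariant under $n_i\mapsto -n_i-1$, since $n(n+1)=(-n-1)(-n)$, so the formula holds for arbitrary odd evaluations. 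Any of these makes the $\leq$ direction airtight.
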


\begin{remark}\label{rem:BlowUpB2+1} 
Because of the chamber structure involved in the domain of the
invariant, the $b_2^+=1$ case needs a bit more care. Suppose that for
a pair of homeomorphic four-manifolds $X_1, X_2$ with $b_2^+(X_i)=1$
and $b_1(X_i)=0$ we have $\vert M_{\SW}(X_1)-M_{\SW}(X_2)\vert \geq
2$. As the wall-crossing can change the invariant by at most 1,
the blow-up formula under this stronger hypothesis implies
that for any $\ell\in \N$ we have 
\[
M_{\SW}(X_1\# \ell \CPb )\neq M_{\SW}(X_2\# \ell \CPb),
\]
consequently the blown-up manifolds $X_1\# \ell \CPb$ and
$X_2\# \ell \CPb$ are smoothly distinct.
\end{remark}

Next, we discuss how the Seiberg-Witten invariants transform under the
circle sum operation.  In this paper, we only consider the case $
\widetilde{X} \#_{\gamma=\gamma'} (S^1 \x Y)$, where $\gamma \subset
\widetilde{X}$ is null-homotopic, $\gamma'$ is isotopic to $S^1 \x
\{\text{pt}\} \subset S^1 \x Y$, and $Y$ is a rational homology
three-sphere, that is, $b_1(Y)=0$.
Thus, for $\widetilde{X} \#_{\gamma=\gamma'} (S^1 \x Y) = \widetilde{X} \#
S(Y)$, we have $ H^2(S(Y); \Z)
\cong H_1(S(Y); \Z)\cong H_1(Y; \Z)$.
As a special case of the Seiberg-Witten blow-up
formula above, we get the following:

\begin{corollary} \label{cor:circlesum-SW}
  Let ${\widetilde {X}}$ be a closed, oriented smooth four-manifold, and let $Y$ be a
  rational homology three-sphere.  For $\s \in {\rm
    {Spin}}^c({\widetilde {X}})$, let $\s ' \in {\rm {Spin}}^c({\widetilde {X}}\# S(Y))$ be its extension.
  Then
  \[
  \SW_{{\widetilde {X}}\# S(Y)}(\s ')=\pm \SW_{{\widetilde {X}}}(\s ).
  \]
  \end{corollary}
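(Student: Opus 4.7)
The plan is to recognize this as the specialization of the blow-up formula (Proposition~\ref{prop:blowup-SW}) to the case $b=0$, taking $X_1 = \widetilde{X}$ and $X_2 = S(Y)$. For this I first need to check that $S(Y)$ is a rational homology four-sphere. Since $\pi_1(S(Y)) = \pi_1(Y)$, the abelianization $H_1(S(Y);\Z) = H_1(Y;\Z)^{\mathrm{ab}}$ is finite by hypothesis on $Y$. Using the decomposition $S(Y) = (S^1 \times (Y \setminus D^3)) \cup (D^2\times S^2)$, an elementary Mayer--Vietoris computation gives $\chi(S(Y))=2$ and $\sigma(S(Y))=0$; combined with $b_1=0$ and Poincar\'e duality this forces $b_2(S(Y);\Q)=0$. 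Thus $Q_{S(Y)}$ has trivial free part, which is the $b=0$ case of Donaldson's diagonalizability required by Proposition~\ref{prop:blowup-SW}.

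Next I would feed this pair into the proposition. With $b=0$ there are no distinguished basis elements $e_i$, hence no integers $n_i$, and the sum $\sum_{i=1}^b n_i(n_i+1)$ is empty. The numerical constraint therefore reduces to
\[
\tfrac{1}{4}\bigl(c_1(\s)^2 - 2\chi(\widetilde{X}) - 3\sigma(\widetilde{X})\bigr)\geq 0,
\]
i.e.\ the non-negativity of the expected dimension of the Seiberg--Witten moduli space on $\widetilde{X}$ at $\s$. If $\SW_{\widetilde{X}}(\s)=0$ there is nothing to prove. If $\SW_{\widetilde{X}}(\s)\neq 0$, this expected dimension is automatically $\geq 0$ (in fact $=0$ for a generic perturbation), so the hypothesis is met and the blow-up formula yields the desired identity $\SW_{\widetilde{X}\#S(Y)}(\s')=\pm \SW_{\widetilde{X}}(\s)$ at once.

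The one point that requires care, and I expect to be the main obstacle, is the meaning of the ``extension'' $\s'$. Spin$^c$ structures on $S(Y)$ form a torsor over $H^2(S(Y);\Z)\cong H_1(Y;\Z)$, which can be a nontrivial finite group, so lifting $\s$ to $\widetilde{X}\# S(Y)$ is not canonical. I would need to verify that the statement holds for \emph{every} such extension. This is built into the blow-up formula, which allows an arbitrary choice of spin$^c$ structure $\s_2$ on $X_2$; in the $b=0$ setting, the constraint on $\s_2$ disappears entirely (no $n_i$), and the classes $c_1(\s_2)$ available for $S(Y)$ are purely torsion, so they do not alter the dimension count or the essentially local gluing argument underlying the blow-up formula. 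Hence the identity is uniform in the choice of extension $\s'$, and the proof is complete.
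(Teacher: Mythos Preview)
Your approach is exactly the paper's: the corollary is stated there simply ``as a special case of the Seiberg--Witten blow-up formula above,'' and you have correctly filled in the details by taking $X_2=S(Y)$ with $b=b_2(S(Y))=0$.

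One small point: the line ``If $\SW_{\widetilde{X}}(\s)=0$ there is nothing to prove'' is too quick --- you still need $\SW_{\widetilde{X}\#S(Y)}(\s')=0$. This is immediate once you observe (as your own computation of $\chi$, $\sigma$, and the torsion nature of $c_1(\s_2)$ shows) that the expected dimension of the moduli space for $\s'$ equals that for $\s$; hence if one is negative so is the other, and both invariants vanish. Also, the expression ``$H_1(Y;\Z)^{\mathrm{ab}}$'' is a slip --- $H_1$ is already abelian; you mean $\pi_1(Y)^{\mathrm{ab}}$.
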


Now, the torus surgeries. By~\cite{morgan-mrowka-szabo}, the Seiberg-Witten function
transforms in a controlled manner under torus surgery. Fix a compact,
oriented, smooth four-manifold $M$ with boundary $\partial M =
T^3$. Let $a, b \in H_1(T^3; \Z)$ be two fixed homology classes given
by $a= [S^1\times \{ \text{pt}\} \times \{ \text{pt}\} ]$ and $b = [\{
  \text{pt} \} \times S^1\times \{ \text{pt}\} ]$ for an
identification $\partial M\to S^1\times S^1\times S^1$. For relatively
prime integers $p, q$, let $M(p,q)$ be the closed four-manifold we get
by gluing $D^2\times T^2$ to $M$, so that $[\partial (D^2)]$ maps to
$pa+qb$.

For $M$ a compact, oriented, smooth four-manifold with 
boundary $\partial M = T^3$, let ${\mathfrak
  {s}} \in   {\rm {Spin}}^c(M)$ such that 
$c_1({\mathfrak {s}})$ restricts trivially to $\partial M$. Let ${\mathfrak {S}}(p,q)$ denote
the set of spin$^c$ structures on $M(p,q)$ whose restriction to $M$
agrees with ${\mathfrak {s}}$, and define $F(p,q)$ as the sum of the
Seiberg-Witten invariants of classes in ${\mathfrak {S}}(p,q)$.
\begin{proposition}[\cite{morgan-mrowka-szabo}] \label{prop:torusSW}
  With the notations as above, we  have
\begin{equation}
  F(p,q)= p\cdot F(1,0)+ q\cdot F(0,1).
\end{equation}
\end{proposition}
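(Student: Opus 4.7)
The plan is to compute $F(p,q)$ by cutting $M(p,q)$ along $T^3=\partial M$ and applying a pairing theorem for Seiberg-Witten invariants. Write $M(p,q)=M\cup_{T^3} N_{p,q}$, where $N_{p,q}\cong D^2\times T^2$ is glued so that $[\partial D^2]\mapsto pa+qb \in H_1(T^3;\Z)$. I would stretch the neck along $T^3$ so that solutions to the Seiberg-Witten equations on $M(p,q)$ split into a pair of finite-energy solutions on $M$ and on $N_{p,q}$ with matching asymptotic limits on $T^3\times\R$.

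On the $M$ side, the relative invariant $\Psi(M,\s)$ is independent of $(p,q)$ since both $M$ and $\s$ are fixed and $c_1(\s)$ restricts trivially to $\partial M$. Consequently all of the $(p,q)$-dependence is concentrated in the cap $N_{p,q}$. The statement reduces to showing that the relative Seiberg-Witten invariant of $N_{p,q}$, packaged as an element of a suitable Floer-theoretic module associated to $T^3$, depends linearly on the class $pa+qb\in H_1(T^3;\Z)$, so that pairing with $\Psi(M,\s)$ and summing over $\s'\in\mathfrak{S}(p,q)$ yields $F(p,q)=p\,F(1,0)+q\,F(0,1)$.

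To prove this linearity, I would analyze the Seiberg-Witten moduli space on $D^2\times T^2$ with cylindrical end $T^3\times[0,\infty)$, organizing solutions by their asymptotic limits. The reducible flat $U(1)$ connections on $T^3$ form a torus parametrized linearly by $H^1(T^3;\R)/H^1(T^3;\Z)$, and the $S^1$--symmetry on $N_{p,q}$ rotating the $D^2$ factor makes this moduli space essentially explicit. The key identity to extract is that the contribution to the relative invariant of filling in $T^3$ by a disk bundle with boundary meridian class $pa+qb$ is linear in that class, i.e., it decomposes as $p$ times the contribution for $[\partial D^2]=a$ plus $q$ times that for $[\partial D^2]=b$. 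Combined with the pairing statement, this gives the claimed formula.

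The main obstacle is the analytical foundation: establishing relative Seiberg-Witten invariants on a four-manifold with $T^3$ boundary, with a cylindrical end metric, requires carefully handling the positive-dimensional family of reducible flat $U(1)$ connections on $T^3$, choosing admissible abstract perturbations that achieve transversality without destroying the linear structure, and proving the gluing/pairing theorem that reassembles the relative invariants into the closed-manifold invariant of $M(p,q)$. Once this framework is in place, the linearity in $(p,q)$ is a clean consequence of the linear dependence of the cap contribution on the meridian class in $H_1(T^3;\Z)$.
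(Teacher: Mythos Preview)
The paper does not give a proof of this proposition at all: it is quoted from \cite{morgan-mrowka-szabo} and used as a black box, so there is nothing in the paper to compare your attempt against.

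As for the substance of your sketch, the overall architecture---cut along $T^3$, set up relative Seiberg--Witten invariants with cylindrical ends, and isolate the $(p,q)$--dependence in the cap $N_{p,q}$---is indeed the shape of the Morgan--Mrowka--Szab\'o argument. But the step you flag as ``a clean consequence'' is precisely the heart of the matter and is not as automatic as you suggest. The cap $N_{p,q}$ is abstractly always $D^2\times T^2$; what changes with $(p,q)$ is the gluing diffeomorphism of $T^3$, hence the induced identification on the Floer-theoretic receptacle, together with the set $\mathfrak{S}(p,q)$ of spin$^c$ extensions you are summing over. Explaining why the \emph{sum} over those extensions, after this identification, is linear in the homology class $pa+qb$ requires a concrete computation with the moduli of flat connections on $T^3$ and the circle of reducibles, and a careful count/orientation argument; asserting ``linear dependence on the meridian class'' at that point is restating the conclusion rather than proving it. Your acknowledgment of the analytic foundations (perturbations near the reducible locus, gluing) is apt---these are exactly where the work lies in the cited reference.
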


\smallskip
\begin{remark} \label{rk:FStorusSW}
Based on the above lemma, the following handy criterion is proved in
\cite[Theorem~1]{fintushel-stern:reverse}: Let $X$ be a closed
oriented smooth four-manifold, $\Lambda \subset X$ be a
null-homologous torus with framing $\eta$, and $\lambda \subset
\Lambda$ be a simple closed curve with push-off $\eta_\Lambda$
null-homologous in $\partial (\nu \Lambda)$.  If the four-manifold $(X,
\Lambda, \eta, \lambda, 0)$ has nontrivial Seiberg-Witten invariant,
then the set $\{ (X, \Lambda, \eta, \lambda, (1/k) \mid k \in \Z^+ \}$
contains infinitely many pairwise non-diffeomorphic four-manifolds.
These manifolds are distinguished by their $M_{\SW}$ invariant.
\end{remark}

Finally, we have the following proposition which we will use to distinguish diffeomorphism types through finite covers:
\begin{proposition} \label{prop:multiplecovers} 
Let $\mathcal{F}=\{X_k \, | \, k \in \Z^+\}$ be an infinite family of
closed oriented smooth four-manifolds with isomorphic rational
cohomology ring and fundamental group. Assume that there exists $n \in
\Z^+$ such that each $X_k$ has an $n$--fold cover $\widetilde{X}_k$
with the property that the maximal Seiberg-Witten function $M_{\SW}$
is unbounded on $\mathcal{\widetilde{F}}=\{\widetilde{X}_k \, | \, k
\in \Z^+\}$.  Then, there is an infinite subfamily $\mathcal{F'}
\subset \mathcal{F}$ such that any pair $X_k, X_{k'} \in \mathcal{F'}$
with $k \neq k'$ are nondiffeomorphic.
\end{proposition}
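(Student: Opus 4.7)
The plan is to argue by a pigeonhole argument on the smooth diffeomorphism classes inside $\mathcal{F}$. First, I would partition $\mathcal{F}$ into equivalence classes $\{\mathcal{C}_\alpha\}_{\alpha \in I}$ under the relation ``$X_k$ is diffeomorphic to $X_{k'}$''. If $I$ is infinite, then choosing one representative from each class immediately produces the desired infinite, pairwise non-diffeomorphic subfamily $\mathcal{F}'$, so the task reduces to showing that $I$ is infinite.

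Suppose, for contradiction, that $I$ is finite. Then at least one class, say $\mathcal{C}_{\alpha_0}$, is infinite. Fix a representative $X_0 \in \mathcal{C}_{\alpha_0}$. For any $X_k \in \mathcal{C}_{\alpha_0}$, a chosen diffeomorphism $\phi_k \colon X_k \to X_0$ transports the $n$-fold cover $\widetilde{X}_k \to X_k$ to an $n$-fold cover of $X_0$, which up to equivalence of covering spaces is classified by an index-$n$ subgroup $H_k \leq \pi_1(X_0)$; in particular $\widetilde{X}_k$ is diffeomorphic to the cover of $X_0$ corresponding to $H_k$. The key algebraic input is that $\pi_1(X_0)$, being the fundamental group of a closed smooth manifold, is finitely presented, and a finitely generated group has only finitely many subgroups of any fixed finite index $n$ (these are in bijection with transitive homomorphisms into $S_n$, of which there are only finitely many from a finite generating set). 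Hence only finitely many diffeomorphism types occur among $\{\widetilde{X}_k \mid X_k \in \mathcal{C}_{\alpha_0}\}$, and since $M_{\SW}$ is a diffeomorphism invariant of closed, oriented, smooth four-manifolds (as recalled in $\S$\ref{sec:SW}, including via the chamber-wise definition when $b_2^+=1$), $M_{\SW}$ takes only finitely many values, hence is bounded, on this subset. Applying the same reasoning to each of the finitely many classes $\mathcal{C}_\alpha$ yields that $M_{\SW}$ is bounded on all of $\widetilde{\mathcal{F}}$, contradicting the hypothesis. Therefore $I$ is infinite, and we conclude as above.

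The only real subtlety — and the step I expect to require the most care to state cleanly — is that the covers $\widetilde{X}_k$ are not canonically associated to the $X_k$ (a given closed manifold may have many inequivalent $n$-fold covers), so in comparing $\widetilde{X}_k$ with a cover of $X_0$ one must allow the corresponding subgroup $H_k$ to range over all index-$n$ subgroups of $\pi_1(X_0)$, rather than a single distinguished one. Beyond this bookkeeping, the proof is a short two-step pigeonhole that uses nothing beyond the finiteness of finite-index subgroups of a finitely generated group together with the diffeomorphism invariance of $M_{\SW}$; no Seiberg--Witten computation or four-dimensional surgery input is needed here, since all such information is packaged into the unboundedness hypothesis on $\widetilde{\mathcal{F}}$.
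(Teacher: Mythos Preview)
Your proof is correct and follows essentially the same approach as the paper: both rest on the fact that a finitely generated fundamental group has only finitely many index-$n$ subgroups, so each diffeomorphism type in $\mathcal{F}$ contributes only finitely many $n$-fold cover types, and the diffeomorphism invariance of $M_{\SW}$ then forces infinitely many diffeomorphism classes. The paper phrases this as a direct greedy selection (iteratively picking $X_{k'}$ whose maximum of $M_{\SW}$ over all $n$-fold covers exceeds that of the previously chosen $X_k$), whereas you package it as a pigeonhole contradiction on the partition into diffeomorphism classes, but the content is the same.
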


\begin{proof}
Since the fundamental group of a compact manifold is finitely
generated, it can have only finitely many subgroups of a given
index. So, there are only finitely many index $n$ covers of each
$X_k$. If $X_k$ and $X_{k'}$ are diffeomorphic, there would be a
bijection between the diffeomorphism classes of their (finitely many)
$n$--fold covers. Let $K$ be the maximum value of $M_{\SW}$ taken over
all $n$--fold covers of $X_k$. By the hypotheses, there is a $k' > k$
where the $n$--fold cover $\widetilde{X}_{k'}$ of $X_{k'}$ has
$M_{\SW}(\widetilde{X}_{k'}) > K$; consequently, the maximum value
$K'$ of $M_{\SW}$ taken over all $n$--fold covers of $X_{k'}$
satisfies $K' > K$. 
This shows that $X_k$ and $X_{k'}$ are not diffeomorphic, and since we
can keep selecting indices with these properties, the
proposition follows.
\end{proof}

\begin{remark} 
In this article, in all the instances where we distinguish the
diffeomorphism types of smooth four-manifolds by comparing their
finite covers, this is done through their double covers. Most of the
fundamental groups involved in our examples, such as the cyclic groups
$Z_{4m+2}$ in Theorem~\ref{thma}, the groups $Z_2\times \pi _1(Y)$ for
an integral homology three-sphere $Y$ (special cases in
Section~\ref{thm:fpp1}) or $\Z_m \x D_m$ (with $m$ odd) in
Theorem~\ref{thm:fpp2}, have unique index two subgroups, and thus,
unique double covers to compare. The more general statement of the
proposition will be needed when this is not the case, for instance for
some of the examples we produce in Theorem~\ref{thm:fpp1}.
\end{remark}

\smallskip
\section{Signature zero families} \label{sec:signzero}
In proving Theorem~\ref{thma} we will first focus on the case when the
four-manifold has vanishing signature.

For further shorthand, let $R_n:=R_{n,n}=L_2 \# n(\CP \# \CPb)$. In
this section, we produce infinitely many irreducible copies of $R_n$,
for each $n \geq 8$. We present three constructions. The first one
hands us infinitely many irreducible copies of $R_n$, for each $n \geq
10$. The second will work  only for odd $n$, but extends to the $n=9$
case and provides many symplectic examples. The third will be only
for even $n$, and with slightly more involved fundamental group arguments;
it will, however, allow us to extend our results to the $n=8$ case.

Our main building block is the minimal symplectic four-manifold $(Z_m,
\omega_m)$ which is an exotic copy of $\# (2m+1)\, (\CP \,\#\, \CPb)$,
constructed in \cite[Theorem~9]{baykur-hamada:signaturezero} for every
$m \geq 4$. 
This simply connected four-manifold $Z_m$ is derived from a genus
$g=m+1$ symplectic Lefschetz fibration $\mathcal{Z}_m$ over $\Sigma_2$
via Luttinger surgery along a link of $4(m-1)$ Lagrangian tori
$\mathcal{L} \subset \mathcal{Z}_m$.  The map  $\mathcal{Z}_m\to \Sigma _2$
is a
Lefschetz fibration over the genus-2 surface $\Sigma _2$ obtained in
\cite{baykur-hamada:signaturezero}, with four nodal fibers (one of
which is reducible) and a section $\Sigma$ of self-intersection
zero.
  These symplectic fibrations have signature zero, hence after the
  Luttinger surgeries (resulting in $Z_m$ above), we get
  symplectic four-manifolds with signature zero.  We refer the reader
  to \cite{baykur-hamada:signaturezero} for further details; some more
  details can also be found in $\S$\ref{sec:signzero-2} below.

\subsection{First construction} \label{sec:signzero-1} \ 

 In $(Z_m, \omega_m)$, there are two embedded symplectic surfaces of square zero that are of particular interest to us, namely,
\begin{enumerate}[(i)]
\item a genus--$2$ surface $\Sigma$, for each $m \geq 4$, and 
\item a torus $T$, for each $m \geq 5$,
\end{enumerate}
where the complement of each surface in $Z_m$ is simply connected and
contains a surface of odd self-intersection.\footnote{While $Z_m$ in
\cite{baykur-hamada:signaturezero} is constructed for $m \geq 4$, we
are able to argue the existence of the desired symplectic square-zero
torus $T$ only when $m \geq 5$, i.e. when the fiber genus of the
Lefschetz fibration $\mathcal{Z}_m$ we started with is at least $g=m+2
\geq 6$.}

The surface $\Sigma \subset Z_m$ descends from the section of
self-intersection zero in the Lefschetz fibration $\mathcal{Z}_m$. We
can assume that with respect to the Gompf-Thurston symplectic form
taken on $\mathcal{Z}_m$, the surface $\Sigma$ is symplectic
\cite{baykur-hamada:signaturezero}. The meridian $\mu_\Sigma$ of
$\Sigma \subset Z_m$ lies on the geometrically dual surface $F$, which
descends from the symplectic fiber of $\mathcal{Z}_m$, and
$\mu_\Sigma=\prod_{i=1}^{m+1} [a_i, b_i]$ in $\pi_1(Z_m \setminus \nu
\Sigma)$, where $\{a_i, b_i\}$ are the images of the standard
generators of $F \cong \Sigma_{m+1}$ under the inclusion map. The
fundamental group calculation in
\cite[Proof~of~Theorem~9]{baykur-hamada:signaturezero} then shows that
$\mu_\Sigma=1$ in the complement, since all $\{a_i, b_i\}$ are trivial
in $\pi_1(Z_m \setminus \nu F)$. Furthermore, one of the components of
the reducible fiber in $\mathcal{Z}_m$ is disjoint from $\Sigma$ and
descends to a surface of odd self-intersection in $Z_m \setminus \nu
\Sigma$.

The surface $T \subset Z_m$ descends from $\mathcal{Z}_m$
along with a geometrically dual Lagrangian torus $T' \subset Z_m$. As
argued in \cite[Addendum~11]{baykur-hamada:signaturezero}, this pair
of Lagrangian tori is contained in $Z_m$ for every $m \geq 5$, and the
meridian $\mu_T$ of $T$, which is equal to a commutator $[a,b]$
supported on $T'$, is trivial in $\pi_1(Z_m \setminus \nu T)$. The
reducible fiber components in $\mathcal{Z}_m$ are disjoint  from $T$, and
yield surfaces of odd self-intersection in $Z_m \setminus \nu
T$. Using Gompf's trick \cite[Lemma~1.6]{gompf:fibersum}, we can
deform the symplectic form on $Z_m$ around the Lagrangian $T$ so that
it becomes symplectic with respect to the new form. We continue to
denote the symplectic form on $Z_m$ by $\omega_m$.
Note that the genus--$2$ surface $\Sigma$ above is still symplectic after this local deformation.

In addition, the following more technical condition holds: there exists a
null-homologous, framed (say by $\eta$) torus $L\subset Z_m$ disjoint
from $\Sigma$ and $T$, and a loop $\lambda \subset L$, whose push-off
to $\partial \nu L$ is null-homologous in $Z_m \setminus \nu L$, such
that the four-manifold obtained from $Z_m$ by the torus surgery $(T,
\lambda, 0)$
has non-trivial Seiberg-Witten invariant.  We will justify below how
our manifolds meet these conditions.

Lastly, we explain how the null-homologous torus $L \subset Z_m$ comes
into the picture: Let $\widecheck{L}$ be a component of the link of
tori $\mathcal{L} \subset \mathcal{Z}_m$ one performs Luttinger
surgeries along to derive $Z_m$. Let $(\widecheck{Z}_m,
\widecheck{\omega}_m)$ denote the symplectic four-manifold obtained by
performing the prescribed Luttinger surgeries along all components of
$\mathcal{L}$ but $\widecheck{L}$. The core of the Lagrangian
neighborhood of $\widecheck{L} \subset \widecheck{Z}_m$ descends to a
framed null-homologous torus $L \subset Z_m$, with a loop $\lambda
\subset L$ with the claimed properties \cite{fintushel-stern:reverse},
where $\widecheck{Z}_m=(Z_m, L, \eta, \lambda, 0)$. Since
$\widecheck{Z}_m$ is symplectic and $b_2^+(\widecheck{Z}_m)>1$, it has
non-trivial Seiberg-Witten invariant.

We can now spell out our first construction. Recall that $R_n=
L_2\# n (\CP \# \CPb)$ are the manifolds we seek
exotic structures on. The construction comes in slightly different
form depending on the parity of $n$.
 
\noindent{\underline{\textit{even $n=2m+2 \geq 10$}}}: We apply the
$\Z_2$--equivariant construction of $\S$\ref{sec:fibersum} with inputs
$(Z_m, \omega_m)$ and the genus--$2$ symplectic surface
$\Sigma$. Denote the double of $Z_m$ we get in this construction by
$\widetilde{X}_m$ and the resulting quotient four-manifold by $X_m$.

By Lemmas~\ref{lem:pi1} and~\ref{lem:b2}, we have $\pi_1(X_m)=\Z_2$,
$\chi(X_m)=4m+6$, $\sigma(X_m)=0$, and $X_m$ has an odd intersection
form. So $X_m$ is homeomorphic to $R_{2m+2}$. On the other hand, by
Lemma~\ref{lem:sympsum}, $X_m$ is irreducible (and $\widetilde{X}_m$
is minimal). Thus, $X_m$ is an exotic  copy of $R_{2m+2}$.
(Recall that we assumed $m \geq 4$.)

Finally, we explain how to derive an infinite family of distinct exotic
copies. Let $Z_{m,k}=(Z_m, L, \eta, \lambda, \frac{1}{k})$, for each $k \in
\Z^+$, where $L$ is the aforementioned framed null-homologous 
torus. (These are not Luttinger surgeries.)
Let $\widetilde{X}_{m,k}$ be the result of the $\Z_2$--equivariant
normal connected sum of $\S$\ref{sec:fibersum} with inputs $\Sigma \subset
Z_{m,k}$, where $\widetilde{X}_{m,k}$ is equipped with the free
involution $\tau_k$, and let $X_{m,k}:= \widetilde{X}_{m,k} \, /
\tau_k$, for each $k \in \Z^+$ be the output.

It is easily seen that the arguments for $\pi_1(Z_{m,k})=1$ given in
\cite{baykur-hamada:signaturezero} apply verbatim to show that
$\pi_1(Z_{m,k} \setminus \nu \Sigma)=1$, implying that 
$\pi_1(\widetilde{X}_{m,k})=1$. Since surgeries along null-homologous
tori do not change the Euler characteristic, signature, or the spin
type, we conclude that $\widetilde{X}_{m,k}$ and $\widetilde{X}_m$
have isomorphic, odd intersection forms. In turn, $X_{m,k}$ is
homeomorphic to $R_{2m+2}$ for every $k \in \Z^+$.

Through the inclusions of the two copies of $Z_m \setminus \nu
\Sigma$, we get a pair of null-homologous tori $L_i$ in
$\widetilde{X}_m$ and curves $\lambda_i \subset L_i$, for $i=1,2$,
where $\tau(L_1, \lambda_1)=(L_2, \lambda_2)$. The torus surgeries
$(L_1, \lambda_1, 0)$ and $(L_2, \lambda_2, 0)$ in $\widetilde{X}_m$,
say performed in this order, result in symplectic four-manifolds,
which, respectively, are the symplectic normal connected sums of
$(\widecheck{Z}_m, \widecheck{\omega}_m)$ and $(Z_m, -\omega_m)$, and
that of $(\widecheck{Z}_m, \widecheck{\omega}_m)$ and
$(\widecheck{Z}_m, -\widecheck{\omega}_m)$, along the copies of
$\Sigma$ and $\overline{\Sigma}$ via the boundary identification
$\Phi$. That is, each \mbox{$0$--surgery} results in a four-manifold
with non-trivial Seiberg-Witten invariant. Then, applying the main
argument for Remark~\ref{rk:FStorusSW}
twice, we
conclude that infinitely many four-manifolds among $\{
\widetilde{X}_{m,k} \, | \, k \in \Z^+\}$ have pairwise different
Seiberg-Witten invariants. Therefore, after passing to an infinite
index subfamily and relabeling the indices with $k \in \Z^+$ again, we
can assume that $\widetilde{X}_{m,k}$ and $\widetilde{X}_{m,k'}$ have distinct Seiberg-Witten invariants whenever $k \neq k'$.

Hence, $\{ X_{m,k} \, | \, k \in \Z^+\}$ constitutes an infinite family of pairwise non-diffeomorphic 
four-manifolds in the homeomorphism class of $R_{2m+2}$, for each $m \geq 4$. 

\smallskip
\noindent{\underline{\textit{odd $n=2m+1 \geq 11$}}}: 
This time, we apply the $\Z_2$--equivariant construction in $\S$\ref{sec:fibersum} with inputs $(Z_m, \omega_m)$ and $T$. Denote the double by $\widehat{X}_m$ and the quotient by $X'_m$. 

Using Lemmas~\ref{lem:pi1},~\ref{lem:b2} and~\ref{lem:sympsum} as before, we conclude that $X'_m$ is an irreducible four-manifold with $\pi_1(X'_m)=\Z_2$, $\chi(X'_m)=4m+4$, $\sigma(X_m)=0$, and odd intersection form. Therefore, $X'_m$ is an exotic irreducible copy of $R_{2m+1}$.

We derive the infinite family of irreducible copies from the
generalized normal connected sum $\widehat{X}_{m,k}$ of two copies of $Z_{m,k}$
along $T$ and $\overline{T}$ via $\Phi$, which is equipped with a free
involution $\tau_k$ defined in the same fashion as before. Let
$X'_{m,k}:= \widehat{X}_{m,k} \, / \tau_k$, for each $k \in \Z^+$. As
above, we get an infinite family of pairwise non-diffeomorphic
four-manifolds $\{ X'_{m,k} \, | \, k \in \Z^+\}$ in the
homeomorphism class of $R_{2m+1}$, for each $m \geq 5$.

\smallskip
\subsection{Second construction} \label{sec:signzero-2} \  

We present another construction of infinitely many exotic copies of
$R_n$, this time for only \emph{odd} $n \geq 9$. These will \emph{not}
come from $\Z_2$--equivariant constructions; instead, we will perform
slightly different Luttinger surgeries in $\mathcal{Z}_m$ than the
ones that yield the simply connected $Z_m$. (As in the first
construction, we will have $n=2m+1$, $m \geq 4$.) Besides the small
extension to $n=9$, an extra property is that each family of exotic
four-manifolds with $\pi_1=\Z_2$ we get, contains minimal symplectic
ones.

To derive our examples, we modify the construction of $Z_m$ in
\cite{baykur-hamada:signaturezero} by performing one of the Luttinger surgeries differently. Nonetheless, we will need to explain the construction of $Z_m$ some more to justify our claims here, which will also set the ground for the third construction below. We refer the reader to the proofs
of Proposition~2(a) and Theorem~9 in
\cite{baykur-hamada:signaturezero} for some of the essential details,
including the notation we adopt. 

Recall that $Z_m$ is derived from a
genus $m+1$ symplectic Lefschetz fibration $\mathcal{Z}_m$ over
$\Sigma_2$ via Luttinger surgery along a link of $4(m-1)$ Lagrangian
tori $\mathcal{L} \subset \mathcal{Z}_m$. There is a decomposition
\begin{equation} \label{eq:decomp-before}
\mathcal{Z}_m= (\mathcal{K} \,  \cup \,  (\Sigma_{m-4}^2 \x \Sigma_1^1))  \, \cup 
(\Sigma_1^1 \x \Sigma_2) \, \cup \,  (\Sigma_{m}^1 \x \Sigma_1^1) .
\end{equation}
Here the fibration on $\mathcal{Z}_m$ restricts to a non-trivial
Lefschetz fibration $\mathcal{K}$ over $\Sigma_1^1$
\cite{baykur-hamada:signaturezero}, whereas on $\mathcal{Z}_m
\setminus \, \mathcal{K}$, it is simply the projection onto the second
factor for each product of surfaces in the
decomposition~\eqref{eq:decomp-before}.  We label the three pieces of
this decomposition as $\mathcal{A}, \mathcal{B}_0$ and
$\mathcal{C}_0$, in the same order they appear above. See
Figure~\ref{fig:schematic} for a schematic presentation.  Four
components of $\mathcal{L}$
are contained in $\mathcal{B}_0$ and the
remaining $4(m-2)$ in $\mathcal{C}_0$. After the Luttinger surgeries,
the decomposition above yields
\begin{equation} \label{eq:decomp-after}
Z_m= \mathcal{A} \cup \mathcal{B} \cup \mathcal{C}.
\end{equation}

\begin{figure}[htbp]
	\centering
	\includegraphics[width=180pt]{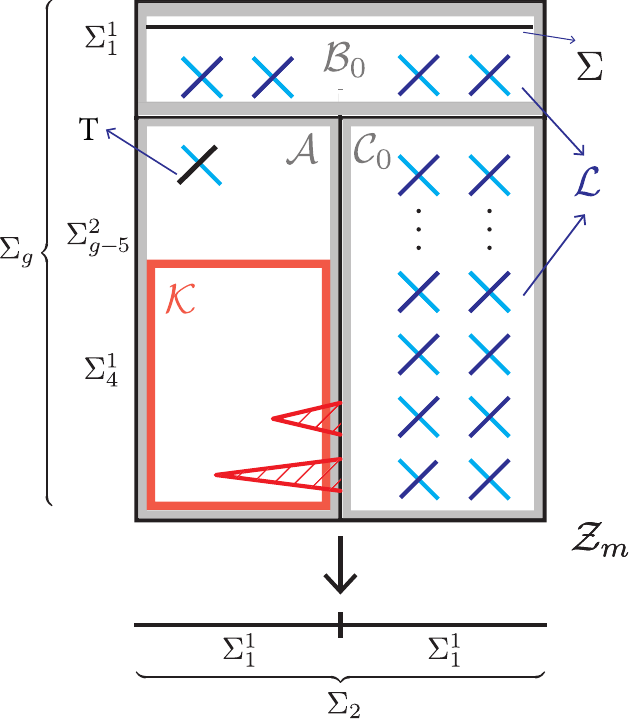}
	\caption{\small The decomposition $\mathcal{Z}_m = \mathcal{A} \cup \mathcal{B}_0 \cup \mathcal{C}_0$. The genus--$2$ surface $\Sigma$ and the torus $T$ are drawn in thick black, the link $\mathcal{L}_m$ in dark blue; all the dual tori are in light blue. The Lefschetz fibration $\mathcal{K}$ is in red.}
	\label{fig:schematic}
\end{figure}

Let $\{a_i, b_i\}$ and $\{x_j, y_j\}$ be the generators of $\pi_1(F)$
and $\pi_1(\Sigma)$, where $F \cong \Sigma_{m+1}$ comes from the
regular fiber of $\mathcal{Z}_m$ and $\Sigma \cong \Sigma_2$ from the
$0$--section. Through the inclusion of $F \cup \Sigma$, these generate
$\pi_1(\mathcal{Z}_m)$.

Let $L_1$ be the component of $\mathcal{L}$ contained in the second
piece $\mathcal{B}_0 \subset \mathcal{Z}_m$, the Luttinger surgery
along which induces the relation $\mu \, a_{m+1} =1$, where
the meridian $\mu$ of
$L_1$ here is a commutator of $x_2$ and $b_{m+1}$. We use the
Lagrangian framing and the same surgery curve (homotopic to $a_{m+1}$
in the complement) as before, but switch the surgery coefficient for the
Luttinger surgery along $L_1 \subset \mathcal{L}$ from $1$ to an
arbitrary positive integer $q$. We perform the same Luttinger
surgeries that were used to get $Z_m$ along the remaining components
of $\mathcal{L}$.

Denote the symplectic four-manifold obtained by the new Luttinger surgery along $\mathcal{L}$ in $\mathcal{Z}_m$ by $Z_m(q)$, and the new surgered piece by $\mathcal{B}(q)$, that is,\[ Z_m(q) = \mathcal{A} \cup \mathcal{B}(q) \cup \mathcal{C}. \]
Replacing $\mathcal{B}$ with $\mathcal{B}(q)$, we now get the relation $\mu
\, a_{m+1}^q =1$ instead. As in the construction of the simply connected $Z_m$, the remaining relators kill all the other generators, so $\pi_1({Z}_m(q))$ is generated by $a_{m+1}$. In particular, $\mu=1$, so we get $a_{m+1}^q=1$. Note that there are no other non-trivial relators involving $a_{m+1}$ in this abelian group. We conclude
that $\pi_1({Z}_m(q)) = \langle a_{m+1} \, | \, a_{m+1}^q \rangle=
\Z_q$.

As we did earlier, we can find a null-homologous torus $L$ in ${Z}_m(p)$,
surgeries along which yields an infinite family of pairwise
non-diffeomorphic 
four-manifolds $\{ Z_{m,k}(q) \, | \, k
\in \Z^+\}$. 

We can now conclude the construction by setting $p=2$.  Setting
$X''_{m,k}$ as the output of the $\Z_2$--equivariant construction of
$\S$\ref{sec:fibersum} with inputs $(Z_{m,k}(2), \Sigma)$, we obtain
infinitely many exotic copies of $R_{2m+1}$, for each $m \geq
4$. (Note that we only get odd $n=2m+1$, as the symplectic
four-manifolds $Z_{m,k}(2)$ with $b_1=0$ necessarily have odd
$b_2^+$.)  When $m \geq 5$, an embedded symplectic torus $T$ with the
same properties as before is also contained in $Z_m(2)$. In this case,
instead of (non-Luttinger) torus surgeries, one can perform knot
surgery \cite{fintushel-stern:knotsurgery} along $T$ using 
fibered knots of different genera
to produce
infinitely many minimal \emph{symplectic} copies of $R_{2m+1}$;
cf. \cite[Addendum~11]{baykur-hamada:signaturezero}.

\smallskip
\subsection{Third construction} \label{sec:signzero-3} \

We present yet another construction, using a codimension zero submanifold contained in $Z_m$ as the main building block. We get infinitely many exotic copies of $R_n$, this time for  \emph{even} $n \geq 8$. The further extension to the $n=8$ case is why we include this variation.

This time we begin with $\mathcal{Z}'_m:=\mathcal{Z}_{m-1}$, the genus $m$ Lefschetz fibration over $\Sigma_2$, and consider the decomposition
\begin{equation} \label{decomposition2}
\mathcal{Z}'_m= (\mathcal{K} \,  \cup \,  (\Sigma_{m-4}^2 \x \Sigma_1^1))  \, \cup 
(D^2 \x \Sigma_2) \, \cup \,  (\Sigma_{m}^1 \x \Sigma_1^1).
\end{equation}
Here ${\mathcal{K}}$ is the same non-trivial Lefschetz fibration over $\Sigma_1^1$ with fibers $\Sigma_4^1$. The main difference from the decomposition~\eqref{eq:decomp-before} we
had for $\mathcal{Z}_m$ is that now the second piece replacing
$\mathcal{B}_0 = \Sigma_1^1 \x \Sigma_2$ is the neighborhood of a section $\Sigma$ of zero
self-intersection, i.e. $\mathcal{Z}'_m=\mathcal{A} \cup \nu \Sigma
\cup \mathcal{C}_0$. 

Let $\mathcal{L}'$ be the link of $4(m-2)$ Lagrangian tori in $\mathcal{Z}'_m$, which are now all contained in $\mathcal{C}_0$. Performing the same Luttinger surgeries along these tori in $\mathcal{C}_0$ yield again $\mathcal{C}$, and the resulting symplectic four-manifold $Z'_m$ decomposes as $Z'_m=\mathcal{A} \cup \nu \Sigma \cup \mathcal{C}$. 

The Luttinger surgeries performed in $\mathcal{C}_0$ are given by the
local model in the proof of \cite[Proposition
  2(b)]{baykur-hamada:signaturezero}. As in the proof of
\cite[Theorem~9]{baykur-hamada:signaturezero}, let $\{a_i, b_i\}$ and
$\{x_j, y_j\}$ be the generators of $\pi_1(F)$ and $\pi_1(\Sigma)$,
where $F \cong \Sigma_m$ descends from the regular fiber of
$\mathcal{Z}'_m$ and $\Sigma \cong \Sigma_2$ from the
$0$--section. Through the inclusion of $F \cup \Sigma$, these generate
$\pi_1(\mathcal{Z}'_m)$, and in turn, $\pi_1(Z'_m)$. After the
Luttinger surgeries along the components of the link $\mathcal{L}'$,
the generators $a_1, b_1, \ldots, a_m, b_m, x_2, y_2$ become normally
generated by two curves $a, b$ on $F$, which get killed in
$\pi_1(Z'_m)$ by the vanishing cycles of $\mathcal{K}$, just like in
the proof of \cite[Theorem~9]{baykur-hamada:signaturezero}. The
surface relator $[x_1,y_1] [x_2,y_2] = 1$ becomes $[x_1,y_1]=1$, so
$\pi_1(Z'_m)$, which is a quotient of $\langle x_1, y_1 \, | \, [x_1,
  y_1] \rangle= \Z^2$, is abelian. No other relators induce any
non-trival relations involving $x_1$ or $y_1$.

The closed four-manifold we have here is the irreducible symplectic four-manifold $(Z'_m, \omega'_m)$ in the homeomorphism class of $(S^2 \x T^2) \# {2m} (\CP \, \# \CPb)$, constructed for each $m \geq 4$ in \cite[Addendum~12]{baykur-hamada:signaturezero}.

We run the $\Z_2$--equivariant construction of $\S$\ref{sec:fibersum}
with inputs $(Z'_m, \omega'_m)$ and $\Sigma$; let $(\widetilde{X'}_m,
\widetilde{\omega}'_m)$ denote the double and $X'''_m$ the quotient
four-manifolds. By Lemma~\ref{lem:pi1} we can arrange the gluing in the
construction so that $\pi_1(\widetilde{X'}_m)=1$, and in turn,
$\pi_1(X''')=\Z_2$. Using Lemmas~\ref{lem:b2} and~\ref{lem:sympsum},
we then conclude that $X'''_m$ is an exotic copy of
$R_{2m}$.

To generate an infinite family, we note that one of the Lagrangian
tori in the link $\mathcal{L}' \subset \mathcal{Z}'_m$ descends to a
null-homologous torus in $Z'_m$, which plays the role of $L$ with a
surgery curve $\lambda \subset L$ we had earlier. Let $(L_i,
\lambda_i)$ be the images of $(L, \lambda)$ under the inclusions of
the two copies of $Z'_m \setminus \, \nu \Sigma$ into
$\widetilde{X'_m}$, so we have $\tau(L_1, \lambda_1)=(L_2,
\lambda_2)$. Let $\widetilde{X'_{m,k}}$ denote the four-manifold
obtained from $\widetilde{X'_m}$ by a pair of equivariant torus
surgeries $(L_1, \lambda_1, k)$ and $(L_2, \lambda_2, k)$. Then
$\widetilde{X'_{m,k}}$ can be equipped with an involution $\tau_k$, so
we take $X'''_{m,k}:=\widetilde{X'_{m,k}} / \, \tau_k$. Following the
same arguments as earlier, we conclude using Remark~\ref{rk:FStorusSW}
that we get infinitely many exotic copies of $R_{2m}$, for
every $m \geq 4$.

\bigskip
We have proved cumulatively:

\begin{thm} \label{thm:signzero}
There are infinitely many, pairwise non-diffeomorphic smooth four-manifolds in the homeomorphism class of $R_{a,b}$, for all $a, b \in \N^2$ with $a=b \geq 8$. \linebreak In each family, any pair of four-manifolds either have different Seiberg-Witten invariants or their universal double covers do.
\end{thm}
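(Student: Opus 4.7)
The plan is to compile Theorem~\ref{thm:signzero} from the three infinite families produced in $\S\S$\ref{sec:signzero-1}--\ref{sec:signzero-3}, parcelled according to the parity of $n=a=b$. For $n = 2m+2$ even with $m \geq 4$ (hence $n \geq 10$) I would take the family $\{X_{m,k}\}_{k \in \Z^+}$ of the first construction; for $n = 2m+1$ odd with $m \geq 4$ (hence $n \geq 9$), the family $\{X''_{m,k}\}_{k \in \Z^+}$ of the second; for $n = 2m$ even with $m \geq 4$ (hence $n \geq 8$), the family $\{X'''_{m,k}\}_{k \in \Z^+}$ of the third. These three cases jointly cover all $n \geq 8$, so it suffices to verify that each family has the required properties.

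For the homeomorphism class, in each construction Lemma~\ref{lem:pi1} arranges that the double cover is simply connected and hence the quotient has $\pi_1 = \Z_2$, Lemma~\ref{lem:b2} computes $\chi$ and gives $\sigma = 0$ so that $a = b = n$, and a reducible fiber component of the underlying Lefschetz fibration $\mathcal{Z}_m$ or $\mathcal{Z}'_m$, sitting inside the complement of the glued surface, produces a surface of odd self-intersection. By the topological classification recalled in $\S$\ref{sec:topclass}, each output is homeomorphic to $R_{n,n}$.

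The main obstacle is showing that each family contains pairwise non-diffeomorphic four-manifolds. For this, I would exploit the null-homologous torus $L$ coming from an unused component of the Luttinger link used to build $Z_m$ (respectively $Z_m(2)$ or $Z'_m$), together with a surgery curve $\lambda$ whose push-off is null-homologous in the complement. The $\tfrac{1}{k}$--surgeries on $(L, \lambda)$, or on the equivariant pair $(L_1, \lambda_1), (L_2, \lambda_2)$ in the double, produce the $k$--indexed family while preserving the homeomorphism type. To separate diffeomorphism types I would apply Proposition~\ref{prop:multiplecovers} with index $2$, reducing to showing that the invariant $M_{\SW}$ is unbounded on the family of universal double covers $\widetilde{X}_{m,k}$.

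This last unboundedness is verified via Remark~\ref{rk:FStorusSW}: the key observation is that the equivariant $0$--surgery along $(L_1, \lambda_1) \sqcup (L_2, \lambda_2)$ in the double \emph{undoes} the non-Luttinger modification and yields a symplectic four-manifold, namely the symplectic normal connected sum of $(\widecheck{Z}_m, \widecheck{\omega}_m)$ with $(Z_m, -\omega_m)$, and analogously for the other two constructions, which is minimal with $b_2^+ > 1$ and hence has non-trivial Seiberg--Witten invariants by \eqref{eq:symp}. Applying the Fintushel--Stern criterion sequentially at $L_1$ and then at $L_2$ then produces, after passing to an infinite subfamily, pairwise distinct $M_{\SW}$ on the doubles, which by Proposition~\ref{prop:multiplecovers} yields the desired infinite subfamily of pairwise non-diffeomorphic quotients. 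The final sentence of the theorem follows immediately from this scheme: in the second construction the Seiberg--Witten distinction is already visible at the level of $X''_{m,k}$ since $\pi_1(Z_m(2)) = \Z_2$ and the $k$--indexed family is built via torus surgeries directly inside $Z_m(2)$, whereas in the first and third constructions it manifests only after passing to the universal double cover.
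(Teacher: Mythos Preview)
Your proposal is correct and follows essentially the paper's approach: Theorem~\ref{thm:signzero} is stated there as the cumulative outcome of \S\S\ref{sec:signzero-1}--\ref{sec:signzero-3}, with the families partitioned by parity as you describe and the diffeomorphism types separated via Remark~\ref{rk:FStorusSW} applied (twice, equivariantly, in the quotient constructions) to the null-homologous torus $L$. Two minor remarks: your inclusion of the first construction for even $n\geq 10$ is redundant since the third already handles all even $n\geq 8$, and invoking Proposition~\ref{prop:multiplecovers} is slightly heavier than needed---since $\pi_1=\Z_2$ has a unique index-two subgroup, the universal double cover is canonical and the paper simply compares those directly.
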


\medskip

\smallskip
\section{Signature negative one families} \label{sec:signone}

Our aim in this section is to prove:

\begin{thm} \label{thm:exoticSignatureminusone}
There are infinitely many, pairwise non-diffeomorphic, smooth
four-manifolds in the homeomorphism class of $R_{n,n+1}$, for every $n
\in \N2$ with $n \geq 0$. In each family, for odd $n$, any pair of
four-manifolds have different Seiberg-Witten invariants, and for even
$n$, their universal double covers do.
\end{thm}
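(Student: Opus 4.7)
The plan is to adapt the $\Z_2$-equivariant normal connected sum construction of $\S$\ref{sec:fibersum} to the signature $-1$ setting by modifying the signature-zero building blocks of $\S$\ref{sec:signzero}. By Lemma~\ref{lem:b2}, $\sigma(X) = \sigma(Z)$ for $X$ the output and $Z$ the input, so I need simply connected symplectic building blocks $W$ with $\sigma(W) = -1$ that carry an embedded symplectic surface $\Sigma$ (genus~$2$, to be used for the odd-$n$ family) or torus $T$ (for the even-$n$ family) of square zero with simply connected complement containing a surface of odd self-intersection, together with a null-homologous framed torus $L \subset W$ in the sense of Remark~\ref{rk:FStorusSW}.

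For large $n$, the cleanest construction is to take $Z_m \# \CPb$ or $Z'_m \# \CPb$ as the building block, blown up at a point disjoint from $\Sigma$, $T$, and $L$. The exceptional sphere $E$ in the blow-up complement already supplies the required surface of odd self-intersection, while all other topological features are inherited from the signature-zero setting. Running the $\Z_2$-equivariant construction with input $(Z_m \# \CPb, \Sigma)$, $(Z_m \# \CPb, T)$, or $(Z'_m \# \CPb, \Sigma)$, and applying Lemmas~\ref{lem:pi1} and~\ref{lem:b2}, yields a four-manifold $X$ with $\pi_1(X) = \Z_2$, $\sigma(X) = -1$, and odd intersection form, hence homeomorphic to $R_{n, n+1}$ for the appropriate $n$ in the range reached by the signature-zero constructions. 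The infinite family $\{X_k\}$ is obtained by $1/k$-surgeries along the equivariantly doubled null-homologous torus $L$; distinctness of the diffeomorphism types follows from Remark~\ref{rk:FStorusSW} applied to the symplectic double covers $\widetilde{X}_k$, together with Proposition~\ref{prop:multiplecovers} (which applies since $\pi_1(X) = \Z_2$ admits a unique index-two subgroup).

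The parity statement in the theorem reflects the standard behavior of Seiberg-Witten invariants under the free $\Z_2$-quotient: for odd $n$, an equivariant transfer from the symplectic double $\widetilde{X}_k$ produces distinct $\SW$ values directly on $X_k$, while for even $n$ this transfer can vanish, and only the invariants of the universal double covers $\widetilde{X}_k$ (with $b_2^+ = 2n+1$ always odd) detect the difference. The main obstacle will be the small-$n$ regime outside the reach of the blow-up-of-signature-zero construction; here one needs to construct smaller simply connected symplectic building blocks with $\sigma = -1$ that simultaneously carry $\Sigma$ (or $T$) of the required topology, a surface of odd self-intersection in its complement, and the required null-homologous torus. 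These can be fabricated, for instance, by inserting an additional nodal fiber into the low-genus Lefschetz fibrations used in \cite{baykur-hamada:signaturezero} and adjusting the Luttinger surgeries accordingly, or by equivariant Luttinger surgery on products of low-genus surfaces. For the smallest cases $n = 0, 1$, where $b_2^+(X) \leq 1$, one additionally invokes Remark~\ref{rem:BlowUpB2+1} to handle the chamber structure of $\SW$, and the distinction of smooth structures is obtained entirely through the universal double covers.
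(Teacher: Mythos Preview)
Your blow-up idea covers only the large-$n$ regime, and the small-$n$ cases---which are the entire content of the theorem beyond what $\S$\ref{sec:signzero} already implies---are handwaved. Blowing up $Z_m$ or $Z'_m$ and running the $\Z_2$--equivariant construction reaches only even $n \geq 8$ (via the genus--$2$ surface $\Sigma$) and odd $n \geq 11$ (via the torus $T$); note incidentally that you have the parity assignments reversed, since by Lemma~\ref{lem:b2} the genus--$2$ surface gives $\chi(X)=\chi(Z_m\#\CPb)+2=4m+7$, hence $b_2^+=2m+2$ is even. Your suggestion to ``insert an additional nodal fiber'' or to use ``equivariant Luttinger surgery on products of low-genus surfaces'' for small $n$ is not a proof: the signature--zero building blocks of \cite{baykur-hamada:signaturezero} do not exist below $m=4$, and no concrete construction is offered in their place.

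The paper proceeds quite differently. For odd $n$ it does not use the equivariant quotient construction at all: it simply cites the direct Luttinger-surgery constructions of Akhmedov--Park \cite{akhmedov-park} (and \cite{ABBKP, torres}), which produce four-manifolds with $\pi_1=\Z_2$ and distinct Seiberg--Witten invariants down to $n=1$; there is no ``equivariant transfer'' involved. For even $n$---where $b_2^+(X)=n$ is even, so $\SW_X\equiv 0$ for dimensional reasons and one is forced to work on the double cover---the paper builds, in $\S$\ref{ssec:construction}, an entirely new family of simply connected double covers $\widetilde{X}_k$ by an elaborate construction starting from $T^4$: a braided torus $\mathcal{T}$ representing $2[F]$, torus surgeries producing the pieces $V(p_1/q_1,p_2/q_2)$, $V^i(u_i/r_i)$, $V_j(U_j/R_j)$, multiple normal connected sums to assemble a genus--$(2m+2)$ surface $G_1$ of square zero, a blow-up, and finally the $\Z_2$--equivariant double along $G_1$. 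This construction works uniformly for every even $n=2m\geq 0$, with the $n=0$ case recovering \cite{stipsicz-szabo:definite}, and comes with explicit $\pi_1$ and Seiberg--Witten computations showing $\widetilde{X}_k$ has exactly two basic classes with value $\pm k^2$. None of this is a minor adjustment of the signature-zero story; it is the substance of the section, and your proposal does not supply a substitute.
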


For a shorthand notation, let us set $S_n:=R_{n,n+1}= L_2\# n \CP \#
(n+1)\CPb$.  Our examples of families of exotic smooth structures on
$S_n$ will be obtained differently for even and odd $n$. For $n$ even,
the $\Z_2$--equivariant construction we are going to carry out will be
more involved than the constructions in the odd $n$ case. The case
$n=0$ here was already covered in~\cite{stipsicz-szabo:definite}, and 
examples for odd $n$ have been present in the literature
\cite{ABBKP, akhmedov-park, torres}; instead of replicating
similar arguments, we will refer to those manifolds to complete the
proof of the theorem.

\subsection{The equivariant construction of simply connected double covers}
\label{ssec:construction} \

In the following, fix $n=2m$, for some $m \in \N$.  Consider the
four-torus $T^4=T^2\times T^2$ together with the fibration map
$T^2\times T^2\to T^2$ given by projection to the second factor. Fix a
fiber $F=T^2\times \{ \text{pt} \}$ and a section $\sigma =\{ s\}
\times T^2$ of this trivial torus fibration.  With the identification
$T^2=[0,1]^2/\sim$, we choose $s=(\epsilon , \epsilon)$ for
sufficiently small $\epsilon >0$.  As explained in
\cite[$\S$4.1]{stipsicz-szabo:definite} (cf.  ~\cite{akhmedov-park,
  smith} for the origins of this idea), by replacing one
$S^1$-component of the fiber with a 2-braid, the homology class $2[F]$
can be represented by a torus, which we call ${\mathcal{T}}$.
This torus can be constructed by taking a suitable double cover of the fiber
within its trivial neighborhood.
In explicit terms, $\mathcal{T}$ is given by the
points $(z_1,z_2,z_3,z_4)$ of $T^4$ and $\theta\in \{ \pi/4, 5\pi/4
\}$ which satisfy
\begin{equation}\label{eq:torus}
(z_3,z_4) =(2\epsilon +\sqrt{2}\epsilon \cdot {\rm
  cos}(g(z_1)+\theta), 2\epsilon+ \sqrt{2}\epsilon\cdot {\rm
  sin}(g(z_1)+\theta)).
\end{equation}

The torus ${\mathcal{T}}$ intersects the section $\sigma$ in two
points, and we assume that the chosen fiber $F$ is disjoint from
${\mathcal{T}}$. Note that if we equip $T^4$ with the product
symplectic structure (coming from volume forms on the $T^2$-factors)
then both the chosen torus $F$ and the chosen section are symplectic,
and the braided torus ${\mathcal{T}}$ is also chosen so that it is a
symplectic submanifold.

In constructing the examples, we will use several building blocks, the
main one $V(p_1/q_1, p_2/q_2)$ is described in
\cite[$\S$3.1]{stipsicz-szabo:definite}. For the sake of completeness,
we recall its definition here. Viewing $T^4$ as the quotient of
$[0,1]^4$, we have the four circles $x=(t,0,0,0)$, $y = (0,t,0,0)$,
$a=(0,0,t,0)$, $b = (0,0,0,t)$, with $t\in [0,1]$; these loops are
based at $x_0= (0,0,0,0)$ and provide a free abelian generating set of
$\pi _1 (T^4, x_0)\cong \Z ^4$.  Furthermore, for the fixed
real numbers {$c_1,c_2,c_2',c_3,c_4', c_4 $} in $[1/2,3/4]$ with
$c_2'<c_2$ and $c_4'<c_4 $ (and coordinates $(z_1,z_2,z_3,z_4)$ on
$T^4$) we consider the 2-dimensional disjoint tori $T_1\subset T^4$
given by the equations $\{ z_2=c_2, z_4 = c_4\} $, $T_2$ given by $\{
z_1= c_1, z_4= c_4'\}$, and
$T_3$ given $\{ z_2=c_2', z_3=c_3\}$.

Consider the disk $D\subset T^2=[0,1]^2/\sim$ centered at $(2\epsilon,
2\epsilon)\in T^2$ with radius $\epsilon$, and notice that $\epsilon$
can (and will) be chosen so small that the above loops as well as the
three tori $T_1,T_2$ and $T_3$ are in $(T^2\setminus D)\times
(T^2\setminus D)$. Using the symplectic structure on $T^4$ as above,
the tori $T_1, T_2, {T_3}$ are Lagrangian, and applying torus surgery
with coefficient $p_1/q_1$ on $T_1$ (with simple closed curve
$\gamma _1\subset T_1$ homologous to $x$) and with coefficient $p_2/q_2$
along $T_2$ (with simple closed curve $\gamma _2\subset T_2$ homologous to
$a$) we get the four-manifold $V(p_1/q_1, p_2/q_2)$. Let $M_0$ denote
$(D\times T^2)\cup (T^2\times D)$ which can be viewed in $T^4$ and in
$V(p_1/q_1, p_2/q_2)$ as well. We denote the complement of the
interior of $M_0$ in $V(p_1/q_1, p_2/q_2)$ by $V_0(p_1/q_1, p_2/q_2)$.

Recall that $n=2m$ is fixed. Consider $m$ four-manifolds
$V^i(u_i/r_i)$ (with $i=1,\ldots , m$ and $u_i, r_i\in \Z$) we get
from $T^4$ by performing $u_i/r_i$ surgery along the Lagrangian torus
$T_1$ with its Lagrangian framing and with simple closed curve
homologous to $a$.
Further $m$ four-manifolds $V_j(U_j/R_j)$ are fixed
which we get from $T^4$ (again, with $j=1,\ldots , m$ and $U_j,R_j \in \Z$)
by performing torus surgery along the torus $T_3\subset T^4$ defined above,
equipped with its  
Lagrangian framing, with surgery curve $b$, 
and surgery coefficient $U_j/R_j$.

We take the normal connected sum of these $2m$ four-manifolds with our
original $V(p_1/q_1,p_2/q_2)$ as follows.  Recall that the fiber $F$
is fixed in $T^4$, and since all torus surgeries were performed
disjoint from a neighborhood of $F$, a copy of this torus (with
a slight abuse of notation also denoted by $F$) appears in all the
four-manifolds $V(p_1/q_1,p_2/q_2), V^i(u_i/r_i)$ and $V_j(U_j/R_j)$.

In the sectional torus $\sigma = \{ s\}\times T^2 $ (with its usual identification
with $\{ s\} \times [0,1]^2/\sim$) fix $2m$ points
$\sigma _i$, for $i=1,\ldots , m$, and 
$\tau _j$, for $j=1,\ldots , m$:
at $(\frac{1}{2}, \frac{i}{10m})$ for $\sigma _i$
and $(\frac{1}{2}, \frac{2}{10}+\frac{j}{10m})$ for $\tau _{m-j}$;
see Figure~\ref{fig:basepoints}.
Fix small disjoint disks centered at these points.
\begin{figure}[htb]
\begin{center}
\setlength{\unitlength}{1mm}
 \includegraphics[height=6cm]{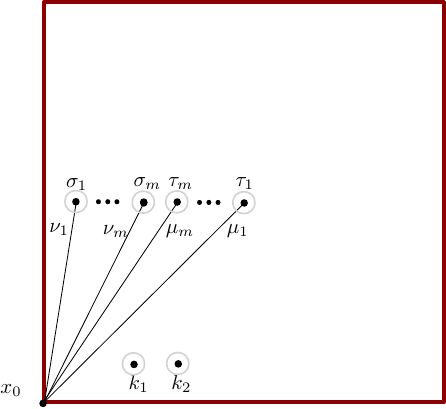}
\end{center}
\caption{\quad The choices of $\sigma _i $ and $\tau _j$.}
\label{fig:basepoints}
\end{figure}

Now take the normal connected sum of $V(p_1/q_1, p_2/q_2)$ with $V^i(u_i/r_i)$ by
identifying the boundary of the small tubular neighbourhood of
$F\subset V^i(u_i/r_i)$ with the boundary of a sufficiently small
tubular neighbourhood of $T^2\times \{ \sigma_i\}$ (given by
$T^2$-times the small disk fixed above) using the diffeomorphism
lifted from the obvious identification of $F\subset V^i(u_i/r_i)$ with
$T^2\times \{ \sigma _i\}$. Take further $m$ normal connected sums in a similar
manner with $V_j(U_j/R_j)$, using now the tori $T^2\times \{ \tau
_j\}$.  Denoting the vector $(u_1/r_1, \ldots , u_m/r_m)$ by ${\bf
  {u}}/{\bf {r}}$ (and $(U_1/R_1, \ldots , U_m/R_m)$ by ${\bf
  {U}}/{\bf {R}}$), the above construction provides the four-manifold
\[
Y=Y(p_1/q_1, p_2/q_2, {\bf {u}}/{\bf {r}}, {\bf {U}}/{\bf {R}}).
\]
In the construction of $Y$ we can glue the sections $\sigma =\{
s\}\times T^2$ (having a copy in each one of the above four-manifolds
$V(p_1/q_1,p_2/q_2)$, $V^i(u_i/r_i)$ and $V_j(U_j/R_j)$) together to
get an embedded surface $\Sigma \subset Y$ with genus $g=2m+1$ and
self-intersection zero.

Note that the torus ${\mathcal {T}}$ of Equation~\eqref{eq:torus} can
be viewed in $V(p_1/q_1, p_2/q_2)$ and indeed in $Y$, giving rise to a
torus (also denoted by ${\mathcal {T}}$) intersecting $\Sigma$ in two
points $k_1,k_2$.  Consider the (singular) surface $\Sigma \cup
{\mathcal {T}}$, resolve the positive transverse intersection at $k_1$
and blow up the four-manifold $Y$ at $k_2$.  By considering the proper
transform of $\Sigma \cup {\mathcal {T}}$, we get an embedded surface
$G_1$ of genus $g+1=2m+2$ and of self-intersection zero in
\begin{equation}\label{eq:TheW}
W(p_1/q_1, p_2/q_2, {\bf {u}}/{\bf {r}}, {\bf {U}}/{\bf {R}}):=
Y(p_1/q_1, p_2/q_2, {\bf {u}}/{\bf {r}}, {\bf {U}}/{\bf {R}})\# \cpkk .
\end{equation}
Notice that if $p_1, p_2, u_i, U_i\in \{ \pm 1\}$ then the
corresponding $W$ is a symplectic manifold. In addition, as the fiber,
the braided surface and the section are all symplectic submanifolds of
$T^4$, and all operations (smoothing, blow-up and normal connected
sum) are symplectic, it follows that $G_1$ is a symplectic surface in
$W$.

Consider now two copies of $(W,G_1):=(W(p_1/q_1, p_2/q_2, {\bf
  {u}}/{\bf {r}}, {\bf {U}}/{\bf {R}}), G_1)$ and define $ {\widetilde
  {X}} (p_1/q_1, p_2/q_2, {\bf {u}}/{\bf {r}}, {\bf {U}}/{\bf {R}})$
as their normal connected sum along the two copies of $G_1$, where the
gluing map $\Phi_g$ is chosen as follows. First, define the map 
$S^1\times \Sigma _{g+1}\to S^1\times \Sigma _{g+1}$ as the antipodal
map on the $S^1$-factor and the orientation-reversing involution
$\phi_g$ on $\Sigma _{g+1}= \Sigma_{m+1}^1 \cup \Sigma_{m+1}^1$, which
exchanges the two subsurfaces $\Sigma_{m+1}$ by reflecting
$\Sigma_{g+1}$ along their common boundary.
See Figure~\ref{fig:reflection}.
\begin{figure}[htb]
\begin{center}
\setlength{\unitlength}{1mm}
 \includegraphics[height=7.6cm]{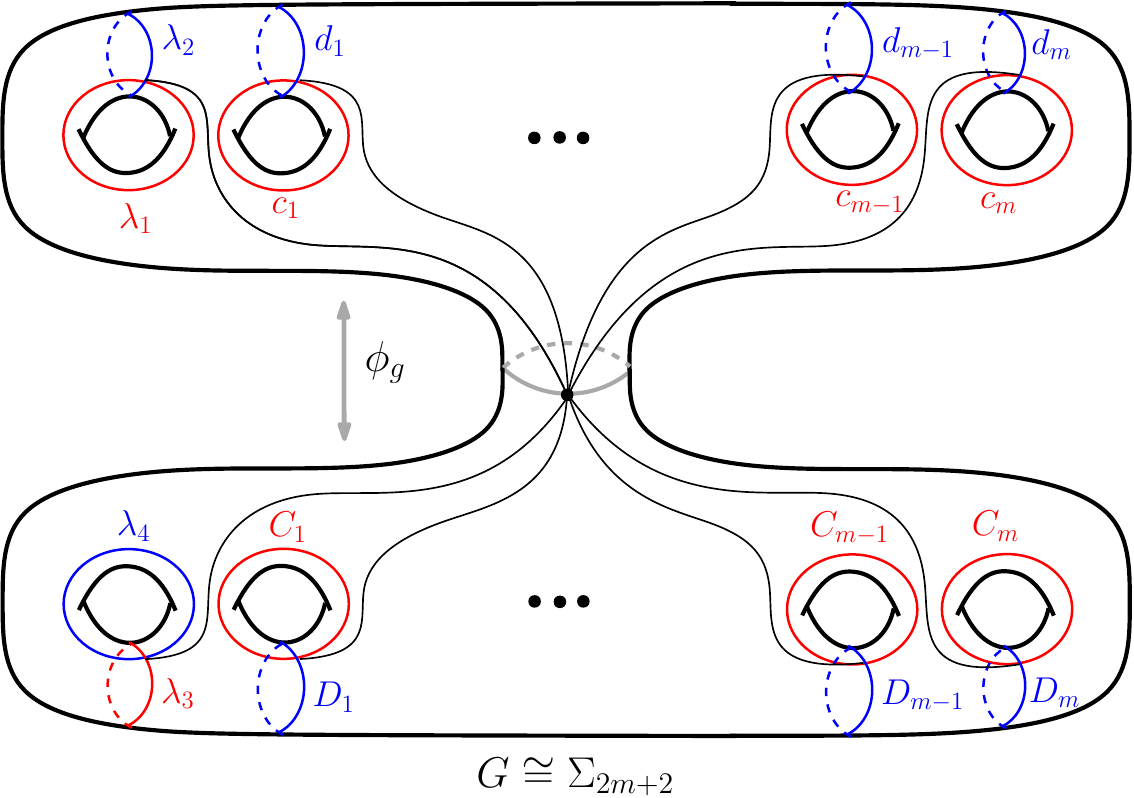}
\end{center}
\caption{\quad The reflection $\phi_g$ of the genus $2m+2$ surface
  $G$; the fixed point set is the gray circle in the middle. The
  fundamental group generators are given by the loops $\{\lambda _i,
  c_j, d_j, C_j, D_j\}$ with the respective arcs connecting them to
  the base point in the middle. The homology generators are $\{ \lambda_i, 
  c_j, d_j, C_j, D_j\}$.}
\label{fig:reflection}
\end{figure}
In order to apply this map, we need to identify $\partial ( W(p_1/q_1,
p_2/q_2, {\bf {u}}/{\bf {r}}, {\bf {U}}/{\bf {R}})\setminus G)$ with
$S^1\times \Sigma _{g+1}$ --- that is, we need to fix a framing on
$G_1$.  {In this step we will use the ideas described in
  \cite[$\S$4.2]{stipsicz-szabo:definite}. Indeed, in $V(p_1/q_1,
  p_2/q_2)$ the genus-2 surface has been equipped with a framing as it
  is detailed in \cite[$\S$4.2]{stipsicz-szabo:definite}.  In the
  further four-manifolds $V^i(u_i/r_i)$ and $V_j(U_j/R_j)$ the
  sections $\{ s\}\times T^2$ come with a natural framing from the
  product structure of $T^4$ before the torus surgeries. These
  framings glue together under the normal connected sums to provide a
  suitable framing for $G_1\subset W(p_1/q_1, p_2/q_2, {\bf {u}}/{\bf
    {r}}, {\bf {U}}/{\bf {R}})$.}  Using this framing, we get an
embedding $D^2\times G_1\subset W(p_1/q_1, p_2/q_2, {\bf {u}}/{\bf
  {r}}, {\bf {U}}/{\bf {R}})$; let $G$ denote $\{ {\rm {pt}}\} \times
G_1$ where ${\rm {pt}}\in \partial D^2$.

As $\Phi _g$ is fixed point free, the four-manifold
${\widetilde {X}} (p_1/q_1, p_2/q_2, {\bf {u}}/{\bf {r}}, {\bf {U}}/{\bf {R}})$
admits a fixed point free, orientation preserving involution $\iota _g$,
by combining
the identification of the two $W$-pieces with the gluing diffeomorphism.

\begin{definition}
We let ${\widetilde {X}}_k$ denote the special case where
all $u_i=U_i=p_2=1$, $r_i=R_i=q_1=q_2=-1$ and $p_1=k\in \N$.
\end{definition}

\subsection{The fundamental group calculation} \ 

This subsection is devoted to the verification the following:
\begin{thm}\label{thm:SimplyConnected}
  For a fixed $m$ (and hence $g=2m+1$)
  the manifolds $X_k$ ($k\in \N$) are simply connected.
\end{thm}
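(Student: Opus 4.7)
The plan is to apply the Seifert--Van Kampen theorem twice: first to compute $\pi_1(W \setminus \nu G_1)$ from the building blocks $V(k/{-1}, 1/{-1})$, $V^i(1/{-1})$ and $V_j(1/{-1})$ that assemble $Y$ (and then $W=Y\# \cpkk$), and second to compute $\pi_1(\widetilde{X}_k)$ as the amalgamated pushout of two copies of $\pi_1(W\setminus \nu G_1)$ glued via $\Phi_g$. The argument will closely follow the template of \cite[\S 4]{stipsicz-szabo:definite}, which carries out the $n=0$ version of this computation.

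In each elementary building block the Luttinger surgery along a Lagrangian torus (or $T_1$, $T_3$) with coefficient $1/{-1}$ relative to its Lagrangian framing expresses a generator of $\pi_1(T^4)$ as a commutator of the other generators. The only exception is the coefficient $k/{-1}$ surgery on $T_1$ inside $V(k/{-1},1/{-1})$, which only yields the relation $x = [b^{-1},y^{-1}]^k$, leaving $x$ not immediately expressible as a single commutator. After the normal connected sums along fibers $F\cong T^2$ that build $Y$, one obtains that $\pi_1(Y\setminus \nu \Sigma)$ is generated by the loops $\{x_j,y_j\}$ on the section $\Sigma$ of genus $g=2m+1$ together with the meridian $\mu_\Sigma$, the latter being a commutator on $\Sigma$.

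Next, $G_1\subset W$ is the proper transform of $\Sigma\cup\mathcal{T}$ after resolving one transverse intersection and blowing up the other. A standard Van Kampen computation, using that the complement of the exceptional sphere in $\cpkk$ has trivial fundamental group and that $\mathcal{T}$ is an essential torus meeting $\Sigma$ twice, gives generators for $\pi_1(W\setminus \nu G_1)$ consisting of the pushed-off loops $\{\lambda_i,c_j,d_j,C_j,D_j\}$ of Figure~\ref{fig:reflection} on $G_1$ together with the meridian $\mu_{G_1}$, where $\mu_{G_1}$ equals a product of commutators of these loops. Applying Van Kampen to $\widetilde{X}_k=(W\setminus \nu G_1)_1\cup_{\Phi_g}(W\setminus \nu G_1)_2$ then amalgamates the two groups; the gluing map $\Phi_g$ identifies each $c_j$ on one side with $C_j$ on the other (and similarly $d_j \leftrightarrow D_j^{\pm 1}$), while inverting $\mu_{G_1}$.

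The key observation that completes the proof is that the reflection $\phi_g$ exchanges the two halves $\Sigma_{m+1}^1$ of $G_1$, and hence pairs each ``bad'' generator surviving on one side of $\widetilde{X}_k$ with a generator on the other side that already satisfies a Luttinger (not merely $k$-th power) commutator relation. Systematically substituting the side-2 relations into the side-1 relations (and vice versa) reduces every generator to a commutator that is itself forced to be trivial. The main obstacle is the careful bookkeeping of the numerous generators and relations, together with verifying that the surgeries in the extra building blocks $V^i$ and $V_j$ supply all the Luttinger relations needed for the reflection-based substitution to close up; the $k$-dependence of the $T_1$-surgery in $V(k/{-1},1/{-1})$ is neutralized precisely by the reflection, which is why $\widetilde{X}_k$ is simply connected for every $k\in\N$.
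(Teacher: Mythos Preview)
Your overall strategy---two applications of Seifert--Van Kampen, Luttinger relations from the building blocks, and the reflection $\phi_g$ to transport relations between the two copies---is exactly the paper's approach. However, your sketch has a genuine gap at the decisive step.

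You describe the action of $\Phi_g$ only on the loops $c_j,d_j,C_j,D_j$ (which under the map $F$ correspond to $a_i,b_i,A_j,B_j$), but you say nothing about its action on the $\lambda_i$, i.e.\ on the push-offs $u_1,v_1,u_2,v_2$. These are precisely the loops that carry the generators $x,y,a,b$ of the central block $V(k/{-1},1/{-1})$, and the $k$--dependence lives entirely in the relation $[b^{-1},y^{-1}]^k x^{-1}=1$ among \emph{those} generators. The paper's key point is that $\phi_g$ swaps $u_1\leftrightarrow v_2$ and $u_2\leftrightarrow v_1$; under $F$ this means that $b,y$ from one copy of $W_k$ are identified (up to the usual commutator corrections) with $x,a$ from the other. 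Since $[x,a]=1$ holds in $W_k$ (one of the relations you did not single out), after gluing one obtains $[b,y]=1$, whence $x=[b^{-1},y^{-1}]^k=1$ regardless of $k$. Only then do the remaining relations $[b^{-1},x^{-1}]a^{-1}=1$, $[b_i^{-1},y]a_i^{-1}=1$, $[A_j^{-1},y]B_j^{-1}=1$ cascade to kill everything else. Your phrase ``pairs each bad generator with one satisfying a Luttinger relation'' gestures at this, but without the explicit $u_i\leftrightarrow v_j$ swap and the specific relation $[x,a]=1$ there is no actual mechanism in your write-up that eliminates $x$.

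A secondary imprecision: the pushed-off loops on $G$ do \emph{not} by themselves generate $\pi_1(W_k\setminus\nu G_1)$---the lemma in the paper shows $F(u_1)=y^2\cdot\prod[a_i,b_i]\cdot\prod[A_j,B_j]$, not $y$. The paper works instead with the intrinsic generators $x,y,a,b,a_i,b_i,A_j,B_j$ of $\pi_1(W_k)$ (Proposition~\ref{prop:FundGroupCalcBasic}) and uses $F$ only to translate the effect of $\phi_g$ into relations among them. Your claim that the generators are ``loops $\{x_j,y_j\}$ on the section $\Sigma$'' conflates the section loops with the fiber loops $x,y$ and should be corrected.
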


{As a first step, we determine a generating system and some relations
  in the fundamental group of $W_k=W(-k, -1, {\bf {-1}}, {\bf {-1}})$.
  In order to do so, fix the following convention: in the torus $\{
  z_1=z_2=0\}$ (containing the basepoint $x_0$) in $T^4$, and
  containing the points $\sigma _1, \ldots \sigma _m, \tau _m ,\ldots
  , \tau _1$, fix paths $\nu _1, \ldots \nu _m, \mu _m, \ldots , \mu
  _1$, as shown by Figure~\ref{fig:basepoints}.

This torus gives rise to a similar torus after the torus surgeries,
hence we can view this torus, together with the points and paths, also
in $V(-k, -1)$.  When taking the normal connected sum of $V(-k,-1)$ with the
manifolds $V^i(-1)$ and $V_j(-1)$, we connect sum the torus $\{
z_1=z_2=\epsilon\}\subset V(-k,-1)$ with the sections of the $V^i$'s
and the $V_j$'s to construct the genus--$g$ surface, which we use to
construct $G_1$. Using these paths and a pair of transversely
intersecting simple closed curves in each section (which are also tori),
we find the loops $a_i, b_i$ (from $V^i(-1)$) and $A_j,B_j$ (from $V_j(-1)$)
($i,j=1, \ldots , m$) in
$W_k$. Using the framing we push these curves $a_i, b_i, A_j, B_j$ to the
boundary of the tubular neighbourhood of $G_1$ --- these push-offs will be
denoted by the same letters.

\begin{proposition}\label{prop:FundGroupCalcBasic}
  The fundamental group $W_k$ (equipped with the base point
  $x_0\in V(-k, -1)$)
is generated by the loops $x,y, a, b, a_i, b_i$, for $i=1, \ldots ,m$,
and $A_j, B_j$, for $j=1, \ldots , m$.
Furthermore, these generators satisfy the relations
\begin{equation}\label{eq:first}
[x,y]=[a,b]=1
\end{equation}
\begin{equation}\label{eq:second}
[x,a]=[y,a]=[b^{-1},y^{-1}]^kx^{-1}=[b^{-1},x^{-1}]a^{-1}=1
\end{equation}
and 
\begin{equation}\label{eq:third}
[x,a_i]=[y,a_i]=[x,b_i]=[b_i^{-1},y]a_i^{-1}=1,
\end{equation}
\begin{equation}\label{eq:fourth}
[x,A_j]=[y,B_j]=[x,B_j]=[A_j^{-1},y]B_j^{-1}=1.
\end{equation}
The same relations hold in $W_k \setminus \nu (G_1)$ with the
exception of the relations in~\eqref{eq:first}.
\end{proposition}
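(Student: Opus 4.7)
The plan is to compute $\pi_1(W_k)$ using iterated applications of the Seifert--Van Kampen theorem and the standard Van Kampen recipe for torus surgery, then argue the last claim by rerunning the same computation on the complement $W_k\setminus\nu(G_1)$.

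First I would handle the core building block $V(-k,-1)$ on its own. Starting from $\pi_1(T^4)=\langle x,y,a,b\mid [x,y],[x,a],[x,b],[y,a],[y,b],[a,b]\rangle$, I would cut out $\nu T_1\cup\nu T_2$, compute the fundamental group of the complement by Van Kampen (so the commutators $[x,b]$ and $[y,b]$ drop out, replaced by the newly relevant meridians $\mu_{T_1}=[b^{-1},y^{-1}]$ and $\mu_{T_2}=[b^{-1},x^{-1}]$ read off from small normal disks), and then add back the two surgery relations $\mu_{T_1}^k\,\lambda_1^{-1}=1$ and $\mu_{T_2}\,\lambda_2^{-1}=1$ with $\lambda_1\sim x$, $\lambda_2\sim a$. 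The surviving tori $F=T^2\times\{\text{pt}\}$ and $\sigma=\{s\}\times T^2$ (which by construction lie in the complement of $T_1\cup T_2$) then push in the relations $[x,y]=1$ and $[a,b]=1$, producing exactly \eqref{eq:first} together with the two surgery relations of \eqref{eq:second}. An entirely analogous local computation in $V^i(-1)$ and $V_j(-1)$ yields the surgery relations underlying \eqref{eq:third} and \eqref{eq:fourth}, with generators $\{x_i,y_i,a_i,b_i\}$ and $\{x_j',y_j',A_j,B_j\}$ respectively.

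Next I would perform the normal connected sums one at a time. Each sum is along a fiber torus, so Van Kampen identifies the fiber-direction generators $(x_i,y_i)$ and $(x_j',y_j')$ of the summands with the basepoint-transported copies $(x,y)$ coming from $V(-k,-1)$; the transport is done along the paths $\nu_i$ and $\mu_j$ fixed in $\sigma$, which is why the section-direction generators $a_i,b_i,A_j,B_j$ enter the output group as new generators based at $x_0$. At each step the relations already derived in each factor descend, producing the full list \eqref{eq:first}--\eqref{eq:fourth}. The subsequent blow-up producing $W_k$ from $Y$ is an internal connected sum with $\overline{\mathbb{CP}^2}$ at a smooth point, so it does not alter the fundamental group and the relations persist in $W_k$.

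For the last sentence of the proposition I would repeat the above, but this time keeping $\nu(G_1)$ removed from the start. Since $G_1$ is the proper transform of $\Sigma\cup\mathcal{T}$, its tubular neighborhood absorbs both the fiber $F$ (through the torus $\mathcal{T}$, a $2$-braided version of $F$) and the section $\sigma$ (the genus--$g$ surface $\Sigma$ is built from $\sigma$ together with the sections of the $V^i,V_j$). Removing $\nu(G_1)$ therefore deletes exactly the two geometric sources of the relations $[x,y]=1$ and $[a,b]=1$, while the surgery tori $T_1,T_2$, as well as those used to build $V^i(-1)$ and $V_j(-1)$, all lie away from $\Sigma$ and $\mathcal{T}$, so the relations \eqref{eq:second}--\eqref{eq:fourth} are still valid in $\pi_1(W_k\setminus\nu(G_1))$.

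The main obstacle I anticipate is bookkeeping of basepoints and path conventions when translating commutators across the fiber sums: the meridian identifications $\mu_F=[a,b]$ on one side with $\mu_{F^i}=[a_i,b_i]$ on the other come with orientation signs that have to be matched to the framing on $G_1$ inherited from the product framings on the individual sections (as in \cite[\S4.2]{stipsicz-szabo:definite}). Getting these signs correct is what fixes the precise form of the commutator relations in \eqref{eq:second}--\eqref{eq:fourth}; once the meridian and framing conventions are pinned down, the Van Kampen assembly is routine.
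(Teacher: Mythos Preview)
Your proposal is correct and follows essentially the same approach as the paper: both argue via Seifert--Van Kampen on the building blocks and then across the normal connected sums, with the fiber-sum gluings identifying the $(x_i,y_i)$ and $(X_j,Y_j)$ with $(x,y)$. The paper simply defers the building-block computation to Baldridge--Kirk~\cite{BK} rather than redoing it, and for the complement it phrases the loss of \eqref{eq:first} as $G_1$ \emph{intersecting} the horizontal and vertical tori in a point each (so those tori are punctured in $W_k\setminus\nu(G_1)$), rather than $\nu(G_1)$ ``absorbing'' them --- but the substance is the same. One small point the paper makes explicit that you leave implicit: the meridian of $G_1$ is an a priori extra generator in $\pi_1(W_k\setminus\nu(G_1))$, but it equals $[x,y]$, so no new generator is actually needed.
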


\begin{proof}
For the proof, we rely on the arguments of
Baldridge-Kirk~\cite{BK}, where the generators and some key relations of the
fundamental group of $V(-k, -1)$ and $V_0(-k,-1)$ (the complement of
$M_0$ in $V(-k, -1)$) are identified.  Indeed, the loops $x,y,a,b$ (as
we specified earlier) generate $\pi _1 (V(-k,-1))$, and similar
elements $x_i, y_i, a_i, b_i$ generate $\pi _1(V^i(-1))$ and $X_j,
Y_j,A_j, B_j$ generate $\pi _1 (V_j(-1))$. By the
Seifert-Van Kampen theorem, in the normal connected sums 
the generators $x_i$ and $X_j$ get all
identified with $x$ and similarly all $y_i$ and $Y_j$ are identified
with $y$. These observations verify that the set of homotopy elements
given in the proposition indeed generate the fundamental group of $W_k$.

The relations in $W_k$ follow from the relations found by \cite{BK}
in $V(-k,-1)$ and in $V_0(-k,-1)$. The same goes for the relations in
$V^i$'s and $V_j$'s and in the complements of their fibers and
sections.

The meridian of $G_1\subset W_k$ may constitute an additional
generator when considering $\pi _1 (W_k\setminus G_1)$, but this
element is equal to $[x,y]$.
As $G_1$ intersects the horizontal and vertical tori of $T^4$ (and
hence of $V(-k,-1)$) in a point each, in $\pi _1 (W_k \setminus G_1)$
the two relations in~\eqref{eq:first} seize to hold.
\end{proof}

Let the simple closed curves in $T^2\times \{ \text{pt} \}\subset T^4$
intersecting each other in a single point generating its first
homology denoted by $\lambda _1, \lambda _2$,
while in $\{ \text{pt} \} \times
T^2\subset T^4$ a similar pair of curves by $\lambda _3, \lambda _4$,
as shown in \cite[Figure~3]{stipsicz-szabo:definite}.
In constructing $G_1$, we normal
connect sum copies of $V^i$, adding genus (with homology generators
$c_i,d_i$, $i=1, \ldots , m$) to the section  
and copies of $V_j$, adding genus (with homology
generators $C_j, D_j$ with $j=1, \ldots ,m$) to the section. In
conclusion, the first homology of the surface $G_1$ is generated by
these curves, see Figure~\ref{fig:reflection} for a pictorial
presentation of them.  Connecting the intersection points of these
pairs to the base point (as shown in Figure~\ref{fig:reflection}),
these simple closed curves naturally provide elements in the
fundamental group of the surface $G_1$ (with base point in the middle),
which we denote by the same letters. Using the framing, we push these
curves (and the base point) to $G$; in this process
$\lambda _1$ gives rise to $u_1$, $\lambda _2$ to $v_1$, $\lambda _3$ to
$u_2$ and $\lambda _4$ to $v_2$, cf. \cite[Figure~4]{stipsicz-szabo:definite}.
The push-offs of $c_i, d_i, C_j, D_j$ can be chosen so
that these curves are homotopic to $a_i,b_i, A_j, B_j$
fixed earlier ($i,j=1, \ldots , m$).

For determining the fundamental group of ${\widetilde {X}}_k$, we will
need to understand the map $F\colon \pi _1 (G, x_0)\to \pi _1 (W_k,
x_0)$ induced by the embedding $i\colon G \to W_k$. (Here the base
point $x_0$ is in $G=\{ {\rm {pt}}\} \times G_1\subset W_k$.)
The map $F$ induced by the
embedding of \cite[Lemma~4.4]{stipsicz-szabo:definite} generalizes to
this setting, and the adaptation of the proof of that lemma shows:
\begin{lemma}
The above choices of curves can be made so that 
$F$ maps
\begin{itemize}
\item $c_i$ to $a_i$ and $C_j$ to $A_j$ $(i,j=1, \ldots , m)$
\item $d_i$ to $b_i$ and $D_j$ to $B_j$ $(i,j=1, \ldots , m)$
\item $v_2$  to $a$ and $u_2$ to $b$, and finally
\item $v_1$ to $x$ and $u_1$ to $y^2\cdot\Pi _{i=1}^m[a_i,b_i]\cdot
\Pi_{j=m}^1[A_j,B_j]$. 
\end{itemize}
\end{lemma}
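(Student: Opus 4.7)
The plan is to follow the strategy of \cite[Lemma~4.4]{stipsicz-szabo:definite}, adapting it to the enlarged surface $G_1$ built from the section $\Sigma$ of the normal connected sum $Y = V(-k,-1) \,\#_F V^1(-1) \,\#_F \cdots \,\#_F V_m(-1)$, together with the braided torus $\mathcal{T}$ (resolved at $k_1$, blown up at $k_2$). Throughout, the claim is that a suitable simultaneous choice of representatives for the homology classes $\{u_1, v_1, u_2, v_2, c_i, d_i, C_j, D_j\}$ on $G$ can be made so that, once pushed off by the chosen framing into $\partial \nu G_1$, their free homotopy classes in $W_k$ coincide with the loops listed.

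First I would dispatch the easy identifications. The curves $c_i, d_i$ (resp.\ $C_j, D_j$) were introduced as the standard genus generators added to $\Sigma$ when normal connect-summing with $V^i(-1)$ along $T^2\times\{\sigma_i\}$ (resp.\ with $V_j(-1)$ along $T^2\times\{\tau_j\}$). By construction they are obtained from pairs of dual simple closed curves in the section of $V^i(-1)$ (or $V_j(-1)$), so after framing push-off they are freely homotopic to the corresponding loops $a_i, b_i$ (or $A_j, B_j$) in $W_k$, which is the content of the first two bullets. Similarly, $u_2, v_2$ are push-offs of a symplectic basis of the $T^2$-factor $\{s\}\times T^2$ of the section $\sigma\subset T^4$ that survives into $V(-k,-1)$ and hence into $W_k$; with the labeling of Figure~\ref{fig:basepoints} this gives $v_2\mapsto a$ and $u_2\mapsto b$. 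The identification $v_1 \mapsto x$ is then immediate from the same reasoning applied to the other factor of $T^4$: the curve $\lambda_1\subset T^2\times\{\mathrm{pt}\}$ descends unchanged through the resolution/blow-up and the normal connected sums to a loop freely homotopic to $x$.

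The main point—and the only part that is not a direct matching of generators—is the image of $u_1$. The plan is to write $u_1$ on $G_1$ as the concatenation of (a) a push-off of $\lambda_2\subset T^2\times\{\mathrm{pt}\}$, which by itself would contribute $y$, with (b) a detour through the handle produced by smoothing $\Sigma\cap\mathcal{T}$ at $k_1$ and blowing up at $k_2$, and (c) detours through each of the $2m$ connected-sum necks attached at $\sigma_i$ and $\tau_j$. Because $\mathcal{T}$ represents $2[F]$, following the detour (b) once forces $u_1$ to pick up an extra factor of $y$ from running twice along the fiber direction of the braided surface (the proper transform of $\Sigma\cup\mathcal{T}$ doubles the $y$-contribution), so the combined contribution of (a) and (b) is $y^2$. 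Pushing $u_1$ so that it traverses each connected-sum neck once in the cyclic order dictated by the base-point configuration of Figure~\ref{fig:basepoints}, the loop returns to $x_0$ having accumulated a commutator $[a_i,b_i]$ (resp.\ $[A_j,B_j]$) at each pass, by the same local argument as in \cite[Lemma~4.4]{stipsicz-szabo:definite}: the neck identifies with the boundary of the tubular neighborhood of the fiber in $V^i(-1)$ (resp.\ $V_j(-1)$), on which the meridian of the section is expressed as the product of commutators of the genus generators. Collecting the contributions in the order prescribed by the paths $\nu_i, \mu_j$ yields
\[
u_1 \longmapsto y^2\cdot \prod_{i=1}^{m}[a_i,b_i]\cdot \prod_{j=m}^{1}[A_j,B_j].
\]

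The hard part, as expected, is the bookkeeping in the previous paragraph: simultaneously choosing the paths and framed push-offs of $u_1$, $v_1$, $c_i$, $d_i$, $C_j$, $D_j$ so that every commutator and every power of $y$ appears in the correct order and with the correct sign. This amounts to a careful, but essentially local, verification on the prescribed model for the resolution plus blow-up near $k_1, k_2$, combined with the standard model for the meridian of a fiber in a Lefschetz-type normal sum, both of which are fixed by the construction in $\S$\ref{ssec:construction}.
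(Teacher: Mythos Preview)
Your proposal is correct and follows essentially the same approach as the paper's proof. Both dispatch the first two bullets as obvious from the definition of the curves $c_i, d_i, C_j, D_j$ in the pieces $V^i, V_j$, and both obtain the last two bullets by appealing to \cite[Lemma~4.4]{stipsicz-szabo:definite} together with the observation that the normal circles of the $2m$ additional fiber sums contribute the commutators $[a_i,b_i]$ and $[A_j,B_j]$, in the order dictated by the paths of Figure~\ref{fig:basepoints}. Your write-up is in fact more explicit than the paper's about \emph{why} the factor $y^2$ appears (via $[\mathcal{T}]=2[F]$) and how the commutators enter; the paper simply points to the cited lemma and notes the modification. One small wording issue: when you say ``the meridian of the section is expressed as the product of commutators,'' what is actually being used is that the meridian of the \emph{fiber} $F\subset V^i(-1)$ equals $[a_i,b_i]$ (the section there being a torus); via the normal-sum identification this is the normal circle you pick up when $u_1$ detours past $\sigma_i$.
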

\begin{proof}
The homotopies claimed in the first two bullet points are rather
obvious in the manifolds $V^i$ and $V_j$. The last two statements
follow as it is detailed in \cite[Lemma~4.4]{stipsicz-szabo:definite},
with the simple modification that now we deal with the normal circles
of all normal connected sums, which can be expressed as commutators of
the chosen generators $a_i,b_i, A_j,B_j$; their order in the product
is determined by the choice of the paths given in
Figure~\ref{fig:basepoints}.
\end{proof}

The action of the reflection $\phi _g$ on $\pi _1(G, x_0)$ is easy to
determine.  Note first that $\phi _g$ moves the base point $x_0$ to
$\phi _g(x_0)$ with equal coordinates on $G_1$, but antipodal on the
$S^1$-factor.  The two points $x_0$ and $\phi _g (x_0)$ can be
connected by a semi-circle in one of the fibers of the fibration
$S^1\times G_1\to G_1$ (given by the chosen framing on $G_1$).  This
choice identifies $\pi _1 (G, x_0)$ with $\pi _1 (\phi _g (G), \phi _g
(x_0))$, and using this identification together with our convention
fixed in Figure~\ref{fig:reflection}, the homomorphism $(\phi _g)_*$
maps
\begin{itemize}
\item $u_1$ to $v_2$ (and $v_2$ to $u_1$);
\item $u_2$ to $v_1$, (and $v_1$ to $u_2$);
\item it interchanges $c_i$ and $C_i$ ($i=1,\ldots ,m$), and
\item interchanges $d_i$ and $D_i$ ($i=1,\ldots ,m$).
\end{itemize}

Now we are ready to conclude the computation of the fundamental group
of ${\widetilde {X}}_k$:

\begin{proof}[Proof of Theorem~\ref{thm:SimplyConnected}]
In the construction of the four-manifold ${\widetilde {X}}_k$ we take
the normal connected sum of two copies of $W_k$ along the surface
$G_1\subset W_k$. The gluing map $\phi _g$ patching $W_k\setminus \nu
(G_1)$ with the other copy is also fixed.

As $b$ and $y$ from one copy correspond to $x$ and $a$ in the other,
the relations $[b^{-1},y^{-1}]x^{-1}$ on one side and $[x,a]$ on the
other imply that $x$ in the first copy of $W_k$ is trivial in the
fundamental group of ${\widetilde {X}}_k$. This then shows that (as
$[b^{-1}, x^{-1}]a^{-1}$ is a relation) $a$ is also trivial; as this
argument works for both sides, $x$ and $a$ are trivial on both
sides. As these are identified via the gluing map with $b$ and $y$,
those elements are also trivial. The relations $[b_i^{-1},y]a_i^{-1}$
imply the triviality of $a_i$ ($i=1, \ldots ,m$) and similarly the
relations $[A_j ^{-1} y]B_j ^{-1}$ imply the triviality of $B_j$. As
these elements are identified with $A_i$ and $b_j$, we see that all
generators are trivial in $\pi _1({\widetilde {X}}_k)$, concluding the
argument.
\end{proof}

\subsection{The topology of ${\widetilde {X}}_k$} \

We first verify that

\begin{proposition}
${\widetilde {X}}_k$ is homeomorphic to
$(2g-1)\CP \# (2g+1)\CPb$.
\end{proposition}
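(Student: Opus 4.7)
The plan is to invoke Freedman's classification: since $\widetilde X_k$ is smooth (so its Kirby–Siebenmann invariant vanishes) and simply connected by Theorem~\ref{thm:SimplyConnected}, it will suffice to show that its intersection form matches the odd, indefinite, unimodular form $(2g-1)\langle 1\rangle \oplus (2g+1)\langle -1\rangle$ of $(2g-1)\CP\#(2g+1)\CPb$. Because odd indefinite unimodular forms over $\Z$ are classified by rank and signature, I only need to verify that $\widetilde X_k$ has (i) $b_2^+(\widetilde X_k)=2g-1$ and $b_2^-(\widetilde X_k)=2g+1$, and (ii) an odd intersection form.

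For (i), I would track $\chi$ and $\sigma$ through the layers of the construction. Torus surgery preserves both invariants, so each building block $V(p_1/q_1,p_2/q_2)$, $V^i(u_i/r_i)$, and $V_j(U_j/R_j)$ has $\chi=\sigma=0$, inherited from $T^4$. Normal connected sum along a torus of square zero is additive in $\chi$ and $\sigma$, so $\chi(Y)=\sigma(Y)=0$; blowing up one point then yields $\chi(W_k)=1$ and $\sigma(W_k)=-1$. Novikov additivity of signature together with the formula $\chi(W_k\#_{G_1}W_k)=2\chi(W_k)+4(g+1)-4$ for a normal connected sum along a genus $g+1$ surface of square zero then yield $\chi(\widetilde X_k)=8m+6$ and $\sigma(\widetilde X_k)=-2$, matching the target values for $g=2m+1$.

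For (ii), the idea is to exhibit a class in $H_2(\widetilde X_k;\Z)$ of odd self-intersection. Inside $W_k=Y\#\CPb$, the proper transform $\widetilde\Sigma$ represents $[\Sigma]-[E]$, has $\widetilde\Sigma^2=-1$, and meets the proper transform $\widetilde{\mathcal T}$ transversely in one point (the unresolved intersection $k_1$), so
\[
[\widetilde\Sigma]\cdot[G_1]\;=\;[\widetilde\Sigma]\cdot([\widetilde\Sigma]+[\widetilde{\mathcal T}])\;=\;-1+1\;=\;0.
\]
Using the long exact sequence of the pair $(W_k,\,W_k\setminus\nu G_1)$ together with the Thom isomorphism $H_2(W_k,W_k\setminus\nu G_1)\cong H_0(G_1)\cong\Z$, whose outgoing map is intersection with $[G_1]$, the class $[\widetilde\Sigma]$ lifts to a class in $H_2(W_k\setminus\nu G_1;\Z)$. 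Pushing it forward along the inclusion of one copy of $W_k\setminus\nu G_1$ inside $\widetilde X_k$ will then produce a class of self-intersection $-1$ in $H_2(\widetilde X_k;\Z)$, making the intersection form odd.

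The potentially delicate step is the lifting in (ii): although $[\widetilde\Sigma]\cdot[G_1]=0$ is a clean homological computation, the surface $\widetilde\Sigma$ shares an open subset with $G_1$ by construction, so one must realize the lifted class by a closed surface sitting entirely inside $W_k\setminus\nu G_1$. I plan to do this by first pushing $\widetilde\Sigma$ off $G_1$ via a nonvanishing section of the framed normal bundle $\nu G_1$ and then cancelling the two resulting transverse intersection points (of opposite signs, since the algebraic count is zero) by tubing along an arc in $G_1$. The resulting embedded surface has self-intersection $-1$ in any closed four-manifold containing $W_k\setminus\nu G_1$; combined with (i), Freedman's theorem then completes the argument.
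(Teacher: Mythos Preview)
Your argument is correct, but your route to oddness is considerably more labor-intensive than the paper's. The computation in (i) matches the paper's exactly (the paper is terser, simply noting that torus surgeries preserve $\chi$ and $\sigma$, and reading off $\chi(\widetilde X_k)=4g+2$, $\sigma(\widetilde X_k)=-2$). For (ii), however, the paper bypasses any explicit class entirely: since $\widetilde X_k$ is smooth and simply connected with $\sigma(\widetilde X_k)=-2$ not divisible by $16$, Rokhlin's theorem forces the intersection form to be odd. That is a one-line replacement for your entire lifting/tubing discussion.

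Your approach does work, and has the virtue of actually exhibiting an odd class rather than inferring one must exist. Two small comments on your execution. First, the phrase ``pushing $\widetilde\Sigma$ off $G_1$ via a nonvanishing section of $\nu G_1$'' is slightly imprecise: near the resolved node $k_1$ the surface $\widetilde\Sigma$ is a disk $\{w=0\}$ while $G_1$ is the smoothed annulus $\{zw=\varepsilon\}$, so $\widetilde\Sigma$ is not a subset of $G_1$ there and the framing of $\nu G_1$ does not directly move that disk. Second, you don't actually need the explicit geometric tubing. Once you know $[\widetilde\Sigma]\cdot[G_1]=0$, the long exact sequence you wrote down already produces a lift $\alpha\in H_2(W_k\setminus\nu G_1;\Z)$, and any class in $H_2$ of a smooth $4$--manifold with boundary is represented by a closed embedded surface in the interior; the self-intersection of that surface is computed locally and hence agrees in $W_k$ and in $\widetilde X_k$. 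This sidesteps the delicate push-off entirely. Either way, the Rokhlin shortcut is what the paper uses and is the cleaner path here.
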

\begin{proof}
As a torus surgery does not change the signature $\sigma$ and the
Euler characteristic $\chi$, these numerical invariants of
${\widetilde {X}}_k$ are $\sigma ({\widetilde {X}}_k)=-2$ and $\chi
({\widetilde {X}}_k)=4g+2$.  As the smooth four-manifold ${\widetilde
  {X}}_k$ is simply connected, and its signature is not divisible by
$16$, by Rokhlin's theorem ${\widetilde {X}}_k$ has odd
intersection form.  The result follows from Freedman's classification
theorem \cite{freedman} for smoothable topological manifolds.
\end{proof}

Finally, we claim that
\begin{proposition}
For fixed $n=2m$ 
the manifolds ${\widetilde {X}}_k$ and ${\widetilde {X}}_{k'}$ are
diffeomorphic if and only if $k=k'$.
\end{proposition}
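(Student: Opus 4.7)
The plan is to distinguish the manifolds $\widetilde{X}_k$ via their Seiberg--Witten invariants. By Theorem~\ref{thm:SimplyConnected} each $\widetilde{X}_k$ is simply connected, and a direct count gives $b_2^+(\widetilde{X}_k) = 2g-1 = 4m+1 > 1$, so the maximal value $M_{\SW}(\widetilde{X}_k)$ is a genuine diffeomorphism invariant.

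First I would establish the base case. When $k=1$ one has $p_1/q_1 = -1$, which together with $p_2/q_2 = -1$ and all $u_i/r_i = U_j/R_j = -1$ turns every torus surgery entering the construction into a Luttinger surgery. Consequently, the building blocks $V(-1,-1)$, $V^i(-1)$ and $V_j(-1)$ are symplectic, the surface $G_1 \subset W_1$ is symplectic (as the transversal smoothing and blow-up of symplectic pieces of $T^4$), and the doubling of $W_1$ along $G_1$ is a symplectic normal connected sum (Lemma~\ref{lem:sympsum}). Hence $\widetilde{X}_1$ carries a symplectic form $\widetilde{\omega}_1$. None of the pieces $V(-1,-1)$, $V^i(-1)$, $V_j(-1)$ contain exceptional spheres, and the single blow-up used in~\eqref{eq:TheW} is absorbed into $G_1$; by \cite{usher:minimality} the double is minimal, so by Taubes $M_{\SW}(\widetilde{X}_1) \geq 1$.

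Next I would invoke the Morgan--Mrowka--Szab\'o torus surgery formula (Proposition~\ref{prop:torusSW}) along the Lagrangian torus $T_1 \subset V(-k,-1) \subset \widetilde{X}_k$. Writing $M = \widetilde{X}_k \setminus \mathrm{int}(\nu T_1)$ and identifying $\widetilde{X}_k$ with $M(-k,-1)$ in the notation of $\S$\ref{sec:SWandtorus}, for every spin$^c$ structure $\s$ on $M$ whose $c_1$ restricts trivially to $\partial M$ one has
\[
F_{\s}(-k,-1) \;=\; -k\,F_{\s}(1,0) \;-\; F_{\s}(0,1) .
\]
For the $\s$ obtained by restricting the canonical spin$^c$ structure of the symplectic manifold $\widetilde{X}_1 = M(-1,-1)$, Taubes' theorem yields $F_{\s}(-1,-1) \neq 0$; a parallel argument on the symplectic manifold $M(-1,0)$ (obtained from $\widetilde{X}_1$ by an additional Luttinger surgery along $T_1$ with a complementary surgery curve) supplies a second non-vanishing value. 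The resulting $2\times 2$ linear system forces $F_{\s}(1,0) \neq 0$, so $|F_{\s}(-k,-1)|$ grows linearly in $k$. Since this summed value is bounded above by $M_{\SW}(\widetilde{X}_k)$ times a combinatorial factor depending only on the finite set of spin$^c$ extensions, we conclude $M_{\SW}(\widetilde{X}_k) \to \infty$. Diffeomorphism-invariance of $M_{\SW}$, combined with a subsequencing argument as in Proposition~\ref{prop:multiplecovers}, then yields that $\widetilde{X}_k$ and $\widetilde{X}_{k'}$ are non-diffeomorphic for $k \neq k'$.

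The principal obstacle is the linear-system step: one must produce enough auxiliary non-vanishing values to rule out the degenerate possibility $F_{\s}(1,0)=0$, in which case the polynomial in $k$ would be constant and undetectable by the sum. The cleanest way is to exhibit a second symplectic filling of $M$ (the Luttinger-surgery sibling described above) and observe that its canonical class differs from that of $\widetilde{X}_1$ so the two contributions are independent; this is in the same spirit as the distinguishing arguments of \cite{stipsicz-szabo:definite} and carries essentially the full technical weight of the proof.
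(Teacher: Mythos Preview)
Your overall strategy is on the right track, but there are two genuine gaps that prevent the argument from proving the proposition as stated.

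\textbf{Two tori, not one.} The manifold $\widetilde{X}_k$ is the normal connected sum of \emph{two} copies of $W_k=W(-k,-1,\mathbf{-1},\mathbf{-1})$, so the parameter $k$ enters through \emph{two} tori $T_1$, one in each copy. When you set $M=\widetilde{X}_k\setminus\mathrm{int}(\nu T_1)$ using only one of them, the complement $M$ still depends on $k$ through the other side; in particular your $M(-1,-1)$ is not $\widetilde{X}_1$ but rather the asymmetric manifold $W(-1,-1,\dots)\cup_{\Phi_g}W(-k,-1,\dots)$. You therefore cannot feed Taubes' nonvanishing for $\widetilde{X}_1$ into your linear system as written. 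The surgery formula must be applied twice, once for each $T_1$, and the intermediate manifolds have to be analyzed as well.

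\textbf{What actually controls the surgery formula.} Your plan to produce a second symplectic filling $M(-1,0)$ in order to rule out $F_{\s}(1,0)=0$ is both vague and unnecessary. The paper takes the opposite route: it shows that the \emph{0-surgery} manifold $W(0,-1,\dots)\cup W(-k,-1,\dots)$ has \emph{no} basic class compatible with the constraint $K^2=8g-2$, because after 0-surgery the fiber class is represented by a sphere and the adjunction inequality is violated. This forces the $F(0,\cdot)$ term in the Morgan--Mrowka--Szab\'o formula to vanish, so $F(k,-1)=k\,F(1,-1)$; applying the same reasoning to the second $T_1$ gives the exact value $\SW_{\widetilde{X}_k}(\pm K)=\pm k^2$. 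The input that makes this clean is not a second symplectic structure but rather the adjunction argument (using the $8m$ tori of square zero together with the surfaces $G_1$, the dual genus-2 fiber, and the two genus-2 surfaces built from the exceptional spheres) showing that $\widetilde{X}_1$ has \emph{exactly two} basic classes.

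\textbf{Consequence for the statement.} Even if your growth argument went through, it would only yield $M_{\SW}(\widetilde{X}_k)\to\infty$, which after subsequencing gives infinitely many pairwise distinct manifolds, not that $\widetilde{X}_k\not\cong\widetilde{X}_{k'}$ for \emph{all} $k\neq k'$. The paper's exact computation $M_{\SW}(\widetilde{X}_k)=k^2$ is what makes the map $k\mapsto[\widetilde{X}_k]$ injective and proves the proposition as stated.
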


This will follow from the computation of their Seiberg-Witten invariants:

\begin{lemma}
The four-manifold $X_m$ has exactly two basic classes $\pm K$, and the Seiberg-Witten
values on these classes are $\SW_{X_m}(\pm K)=\pm m^2$.
\end{lemma}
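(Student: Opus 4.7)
The plan is to first compute the Seiberg--Witten invariants of the symplectic model $\widetilde{X}_1$ (corresponding to all surgery coefficients lying in $\{\pm 1\}$) using Taubes' theorem together with adjunction inequalities and the blow-up formula, and then transfer the computation to arbitrary $k$ via the Morgan--Mrowka--Szab\'o torus surgery formula of Proposition~\ref{prop:torusSW}.

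The four-manifold $W_1 = W(-1,-1,\mathbf{-1},\mathbf{-1})$ is obtained from $T^4$ by Luttinger surgeries (with coefficients in $\{\pm 1\}$) and a single symplectic blow-up, so it is symplectic; in particular the surface $G_1 \subset W_1$ is a symplectic genus-$(2m+2)$ surface of self-intersection zero. The symplectic normal connected sum of two copies of $(W_1, G_1)$ along $G_1$ and $\overline{G_1}$ is then a simply connected symplectic four-manifold $\widetilde{X}_1$ with $b_2^+ > 1$, and Taubes' theorem \eqref{eq:symp} immediately gives $\SW_{\widetilde{X}_1}(\pm K) = \pm 1$ for $K$ the canonical class of $\widetilde{X}_1$. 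To prove uniqueness of $\pm K$ as basic classes, I would apply Proposition~\ref{prop:adjunction} to the surface $G$ (yielding $|\langle K', [G]\rangle| \leq 4m+2$ for any basic class $K'$) together with the square-zero symplectic tori descending from the fiber $F$ and from the $V^i, V_j$ summands, and then invoke the blow-up formula (Proposition~\ref{prop:blowup-SW}) to pin down $|\langle K', [E]\rangle| = 1$ for each exceptional sphere $E$ of the $\#\cpkk$ blow-ups in each $W$-piece.

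For general $k$, the manifold $\widetilde{X}_k$ arises from $\widetilde{X}_1$ by replacing the surgery coefficient $-1$ on the Lagrangian torus $T_1 \subset V(-1,-1)$ with $-k$, performed symmetrically on each of the two $W$-pieces in the doubled construction. Two applications of Proposition~\ref{prop:torusSW} (one per $W$-piece), seeded by the symplectic computation above and by the Luttinger intermediate values, express $\SW_{\widetilde{X}_k}$ evaluated at the class extending $K$ as an explicit affine expression in $k$, from which the claimed value $\pm m^2$ is extracted via a direct tally of the contributions indexed by the $m$ pairs of $V^i, V_j$ summands incorporated on each side of the double.

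The main obstacle will be the adjunction and blow-up bookkeeping of the second paragraph: the abundance of embedded surfaces arising from the product pieces and iterated normal connected sums, together with the braided torus $\mathcal{T}$ from \eqref{eq:torus} and its geometric duals, must each be examined systematically in order to rule out any extra basic class. Once this uniqueness is established, the chain of Morgan--Mrowka--Szab\'o surgery relations in the final step is essentially algebraic and routine.
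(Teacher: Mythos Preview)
Your overall strategy---Taubes plus adjunction for $\widetilde{X}_1$, then Morgan--Mrowka--Szab\'o to pass to general $k$---matches the paper's approach. But the final step contains a genuine misunderstanding of where the value $\pm k^2$ (the lemma's ``$m$'' is the surgery parameter $k=p_1$, not the number of $V^i,V_j$ summands) comes from.

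The $m$ pairs of $V^i,V_j$ pieces all carry surgery coefficients in $\{\pm 1\}$ and do \emph{not} contribute nontrivial factors to the Seiberg--Witten value. What produces the square is that the torus $T_1$ with coefficient $-k$ appears once in \emph{each} of the two $W$-pieces of the double, so Proposition~\ref{prop:torusSW} is applied twice and each application contributes a factor of $k$. For this to work you must also show that the $F(1,0)$ term in the surgery formula vanishes; the paper does this by observing that after $0$-surgery on $T_1$ the fiber class is represented by a sphere, which glues to a self-intersection-zero torus in the doubled manifold on which any basic class would have to evaluate as $\pm 2$, violating the adjunction inequality. Without this vanishing, two applications of the formula give a quadratic in $k$ with possibly nonzero lower-order terms, not the clean $k^2$. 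Your phrase ``an explicit affine expression in $k$'' and the appeal to a ``tally of the contributions indexed by the $m$ pairs'' both reflect this gap.

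For the uniqueness of $\pm K$, your sketch is in the right direction but underspecified: the paper uses the $8m$ square-zero tori coming from the dual Lagrangian pairs in the $V^i,V_j$ blocks (forcing a basic class to vanish on those), then identifies the remaining four generators of $H_2$ as $[G_1]$, a genus-$2$ dual built from fibers, and two genus-$2$ surfaces of square $-1$ built from the exceptional spheres; simple type plus adjunction on these four pins the class down. You should make sure your surfaces actually generate the rest of $H_2$ rather than just listing adjunction bounds.
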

\begin{proof}
  For $g=1$ this claim was verified in \cite{stipsicz-szabo:definite}
  for a slightly different gluing diffeomorphism, but the arguments of
  \cite[Theorem~4.7]{stipsicz-szabo:definite} apply to the current
  setting verbatim. Indeed, we can use the manifolds constructed in
  \cite{stipsicz-szabo:definite} in proving the $g=1$ case of the lemma.
  
Assuming $g=2m+1>1$, note first that the manifold ${\widetilde {X}}_1$
is constructed from a symplectic four-manifold using Luttinger
surgeries, hence itself is symplectic. This shows that (as
$b_2^+({\widetilde {X}}_k)=2g-1>1$) the cohomology classes $\pm
c_1({\widetilde {X}}_1, \omega)$ are basic classes, with
Seiberg-Witten values $\pm 1$.  Using the adjunction inequality it can
be shown that these are the only basic classes.  Indeed, in
$W_1=W(-1,-1, {\bf {-1}}, {\bf {-1}})$ there are $2m$ pairs of tori,
intersecting each other within the pairs transversally once (otherwise
disjoint), and having self-intersection 0.  Gluing two copies of $W_1$
(along $G$) together, we see $8m$ tori of self-intersection zero, on
which (by the adjunction inequality of
Proposition~\ref{prop:adjunction}) all basic classes evaluate
trivially. We have that $b_2({\widetilde {X}}_k)=4g=8m+4$, and for the
evaluation of a basic class on the remaining four homology elements
the argument of \cite[Lemma~4.7]{stipsicz-szabo:definite} applies.
Indeed, the remaining four homology elements can be represented by
embedded surfaces as follows: the surface $G_1$ (a self-intersection
zero, genus-$(g+1)$ surface) is intersected transversely once by a
self-intersection zero, genus--2 surface we get by gluing together two
fibers from $V(-1,-1)$; these two homology classes are denoted by $x$
and $y$ respectively.  The blow-up on either side of the construction
provides a $(-1)$--sphere which intersects $G_1$ twice. Completing
these twice-punctured spheres with punctured tori on the other side,
we get two disjoint genus-2 surfaces with self-intersection $-1$ and
intersecting $G_1$ twice. Let $q_1, q_2$ denote their homology
classes; so $q_1\cdot q_2=0$, $q_i\cdot y=0$ and $q_i\cdot x=2$ for
$i=1,2$.

As a basic class $K$ for a symplectic four-manifold (being of simple
type) has $K^2=3\sigma +2\chi$, which for ${\widetilde {X}}_1$ is
equal to $8g-2$, we get that (as $K$ is characteristic) it should
evaluate $\pm 2g$ on $x$, $\pm 2$ on $y$, and the two evaluations need
to have the same sign. Once again, to have $K^2=8g-2$, we need that
$K$ evaluates on $q_1, q_2$ as $\pm 1$. Assume that $K(x)=2g$ (and so
$K(y)=2$).  The exact same argument as in
\cite[Lemma~4.7]{stipsicz-szabo:definite} implies that
$K(q_1)=K(q_2)=1$, implying that $K=\pm c_1({\widetilde {X}}_1)$.

Now Proposition~\ref{prop:torusSW} concludes the argument for the
proof of the lemma. Indeed, the formula shows that ${\widetilde
  {X}}_k$ also has exactly two basic classes, provided we determine
the Seiberg-Witten function for the manifold $W(0,-1, {\bf {-1}}, {\bf
  {-1}})\cup W(-k,-1, {\bf {-1}}, {\bf {-1}})$ we get by performing
0-surgery on one of the surgery tori. After performing 0-surgery on
the torus $T_1$, we can represent the fiber with an embedded sphere,
which in $W(0,-1, {\bf {-1}}, {\bf {-1}})\cup W(-k,-1, {\bf {-1}},
{\bf {-1}})$ connects to a torus, representing a homology class with a
torus on which potential basic classes evaluate as 2 (in order to have
the required square), providing the desired contradiction.

Then Proposition~\ref{prop:torusSW}, applied twice, determines
the value of the Seiberg-Witten invariant on the unique (pair) of basic
classes $\pm K$, giving
\[
\SW_{{\widetilde {X}}_k}(\pm K)=\pm k^2,
\]
verifying that the positive integer $k$ is a diffeomorphism invariant of
$X_k$, concluding the argument.
\end{proof}

\subsection{Exotic four-manifolds with signature $-1$ and fundamental group $\Z _2$} \

We are ready to complete the main result of this section.

\begin{proof}[Proof of Theorem~\ref{thm:exoticSignatureminusone}]
  We are going to present infinitely many exotic copies of the
  four-manifold $S_n=L_2\# n \CP \# (n+1)\CPb$, with pairwise
  (virtually) distinct Seiberg-Witten invariants.  By 'virtual
  Seiberg-Witten invariants' we mean the invariants of the manifold at
  hand, together with the Seiberg-Witten invariants of its double
  cover.

For odd
$n$, the desired examples can be obtained from exotic simply connected
families via Luttinger surgeries, as we did in
$\S$\ref{sec:signzero-2}. For $n=1$, the desired example is
constructed in \cite[$\S$9]{akhmedov-park}; for $n > 1$, the
construction of such examples is outlined in
\cite[Remark~11]{akhmedov-park}. The corresponding infinite families
of four-manifolds have pairwise distinct Seiberg-Witten invariants.

For even $n$, by choosing $n=2m$, our construction in this section
yields an infinite family of closed, simply connected four-manifolds
$\{ {\widetilde {X}}_k\}$ with free, orientation-preserving
involutions $\tau_k$, which pairwise have distinct Seiberg-Witten
invariants. The quotient four-manifolds $\{X_k={\widetilde {X}}_k /
\tau_k \, | \, k \in \Z^+\}$ provide the desired family of exotic
four-manifolds in the homeomorphism class of $S_n$. Note that here we
distinguished the exotic structures through the Seiberg-Witten
invariants of their universal double covers. (Indeed, all these
four-manifolds have vanishing Seiberg-Witten invariants, as
$b_2^+(S_n)=n$ is even, which implies that the moduli spaces of
Seiberg-Witten solutions are odd dimensional.)
\end{proof}

\smallskip
\section{Proof of Theorem~\ref{thma}} \label{sec:proofA}

We are now ready to prove our first main theorem.

\begin{proof}[Proof of Theorem~\ref{thma}]
We begin with the $\pi_1=\Z_2$ case. We are going to generalize our results to the $\pi_1=\Z_{4m+2}$ case, for any further $m \in \N$ afterwards.

Let $\{X_{n,k} \}$ be an infinite family of pairwise non-diffeomorphic
four-manifolds in the homeomorphism class of $R_{n,n}=L_2 \# n \, \CP
\# n \, \CPb$ we constructed in Theorem~\ref{thm:signzero} for every
$n \geq 8$, where $k$ runs through values in $\Z^+$. For $k \neq k'$,
either $X_{n,k}$ and $X_{n,k'}$ have different Seiberg-Witten
invariants, or their universal double covers $\widetilde{X}_{n,k}$ and
$\widetilde{X}_{n,k'}$ do.  By Corollary~\ref{cor:BlowUpDistinction}
it follows that the blow-ups $X_{n,k} \# \ell \, \CPb$ and $X_{n,k'}
\# \ell \, \CPb$ are pairwise non-diffeomorphic, because either they
have different Seiberg-Witten invariants, or their universal double
covers $\{\widetilde{X}_{n,k} \# {2\ell} \, \CPb \}$ and
$\{\widetilde{X}_{n,k'} \# {2\ell} \, \CPb \}$ do.  Therefore,
$\{X_{n,k} \# \ell \, \CPb \}$ constitutes an infinite family of
pairwise non-diffeomorphic four-manifolds homeomorphic to $R_{n,
  n+\ell}$.  Reversing the orientations of these four-manifolds, we
get a similar family homeomorphic to $R_{n+\ell , n}$.  We conclude
that there are infinitely many exotic copies of $R_{a,b}$ for every
$a, b \geq 8$.

Next, let $\{X'_{n,k} \}$ be an infinite family of pairwise
non-diffeomorphic four-manifolds in the homeomorphism class of
$R_{n,n+1}= L_2 \# n \, \CP \# \, (n+1) \CPb$ we constructed in
Theorem~\ref{thm:exoticSignatureminusone} for every $n \geq 0$, and $k
\in \Z^+$. Once again, for $k \neq k'$, the four-manifolds $X_{n,k}$
and $X_{n,k'}$ have different virtual Seiberg-Witten invariants,
so their blow-ups are also pairwise non-diffeomorphic
(cf. Corollary~\ref{cor:BlowUpDistinction}). It follows that
$\{X'_{n,k} \# \ell \, \CPb \}$ constitutes an infinite family of
pairwise non-diffeomorphic four-manifolds in the homeomorphism class
of $R_{n, n+\ell +1}$. We get a similar family in the homeomorphism
class of $R_{n+\ell +1, n}$ by reversing orientations.

Therefore, there are infinitely many exotic copies of $R_{a,b}$ also when $a, b \leq 7$, provided $a \neq b$, which realize the remaining homeomorphism types promised in the statement of the theorem when $\pi_1=\Z_2$.

To generalize to any $\pi_1=\Z_{4m+2}$, we do the following: 

When the family $\{X_{n,k}\}$ (or $\{X'_{n,k}\}$) is constructed as
the quotient of simply connected four-manifolds
$\{\widetilde{X}_{n,k}\}$, we apply the $\Z_2$--equivariant circle sum
construction of $\S$\ref{sec:circlesum} with inputs
$Z=\widetilde{X}_{n,k}$ and the lens space $Y=L(2m+1, 1)$. Denote each quotient
four-manifold by $X_{n,k}(m)$. By Lemma~\ref{lem:circlesum-topology},
we have $\pi_1(X_{n,k}(m))=\Z_{2m+1} \x \Z_2= \Z_{4m+2}$. By
Corollary~\ref{cor:circlesum-SW}, the family $\{X_{n,k}(m)\}$ consists of
four-manifolds with distinct virtual Seiberg-Witten invariants.

When the family $\{X_{n,k}\}$ (or $\{X'_{n,k}\}$) is derived by a
certain Luttinger surgery with surgery coefficient $q=2$ from a simply
connected symplectic four-manifold (for each $k \in \Z^+$), we can
instead perform a $q=4m+2$ Luttinger surgery along the same torus
(with the same surgery curve) to derive a four-manifold we denote by
$X_{n,k}(m)$. As noted for instance in ~$\S$\ref{sec:signzero-2}, we
get $\pi_1(X_{n,k}(m))=\Z_{4m+2}$ and $\{X_{n,k}(m)\}$ also consists
of four-manifolds with distinct Seiberg-Witten invariants.

These two modified constructions then provide infinitely many exotic
copies of $R_{a,b}(m)$ with signatures $\sigma=0$ (when $a=b \geq 8$)
and $-1$ (when $a=b-1\geq 0$). As before, we obtain infinitely many exotic
copies of all the other $R_{a,b}(m)$ claimed in the theorem
by blow-ups and by reversing
orientations.
\end{proof}

\smallskip
\begin{remark} \label{rk:remaining}
A word about the exceptions in Theorem~\ref{thma} for $\pi _1=\Z _2$:
For $n\in \{1, 2, 3 \}$ the double cover $(2n+1)(\cpk \# \cpkk )$ of
$R_n=L_2 \# n\, (\CP \# \CPb)$ is not known to admit exotic copies;
whereas for each $n \in \{4,5,6,7\}$ the main theorem of
\cite{baykur-hamada:signaturezero} provides infinitely many exotic
copies, but we did not manage to find an orientation preserving free
involution on these four-manifolds.
\end{remark}

\begin{remark} \label{rk:literature}
Exotic smooth structures on some of these topological four-manifolds
were given in earlier works. For example, for odd $b_2^+$, symplectic
examples with cyclic fundamental groups (and in particular with
$\pi_1=\Z_{4m+2}$), can be derived by performing Luttinger surgeries
from the examples of \cite{ABBKP}. There are also many classical
examples from complex algebraic geometry, such as elliptic surfaces
$E(n)_{p,q}$ with gcd$(p,q)=4m+2$. The most extensive study of
examples with odd $b_2^+$ and finite cyclic fundamental group was
carried out by Torres in \cite{torres}. Definite examples and examples
with even $b_2^+$ and $b_2^-$, which are also derived from minimal
simply connected four-manifolds equipped with free involutions, are
more recent; see \cite{beke-koltai-zampa,  harris2024exotic,
LLP,
  stipsicz-szabo:definite, stipsicz-szabo:bigdefinite}.  One
can also use Bauer's non-vanishing result for stable cohomotopy
Seiberg-Witten invariants \cite{bauer} to obtain some exotic families with
cyclic $\pi_1$ and even $b_2^+$ and $b_2^-$ from this collection of
examples and the simply connected ones.  
\end{remark}

\begin{remark}  \label{rk:irred}
We expect that, for $\frac{1}{5}(a-4) \leq b\leq 5 a+4$, an analogous
statement to Theorem~\ref{thma} with infinite families of
\emph{irreducible} smooth structures can be obtained using the methods
of this paper. The additional inequalities imposed on the
four-manifold $X$ homeomorphic to $R_{a,b}$ translates to having
$c_1^2=2 \chi + 3 \sigma \geq 0$ for at least one orientation of $X$
and its double cover, which makes it possible for us to detect
irreducible smooth structures on them via Seiberg-Witten theory. Our
methods can also be applied to obtain partial results for
four-manifolds with \emph{even} intersection forms, but without access
to blow-ups (which would yield odd intersection forms) to expand the
geography, the possible results are largely restricted to the realm of
irreducible four-manifolds. Unresolved mysteries surrounding the
topology of smooth spin four-manifolds, such as the outstanding
$11/8$--conjecture, make it currently impossible to approximate any
exhaustive statements as in Theorem~\ref{thma}.
\end{remark}

\smallskip
\section{Non-complex fake projective planes} \label{sec:fpp}

In this final section, we prove Theorem~\ref{thmb}.  We are going to
obtain our irreducible fake projective planes in two ways, one
leveraging the equivariant circle sum construction, and the other via
equivariant torus surgeries. 
We first record the following fact:

\begin{proposition} \label{prop:FPPirreducible}
  A complex fake projective plane, and more generally, a
  compact complex surface with Chern numbers $c_1^2=3 c_2 >0$ is smoothly irreducible.
\end{proposition}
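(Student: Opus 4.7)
The plan is to first classify the compact complex surfaces satisfying the Chern number equality $c_1^2 = 3c_2 > 0$, and then verify smooth irreducibility case by case. For the classification, I would combine the Enriques--Kodaira classification with the Bogomolov--Miyaoka--Yau inequality and its equality case due to Yau. A quick case analysis eliminates surfaces of Kodaira dimension at most one: class~VII surfaces and minimal surfaces with $\kappa \in \{0,1\}$ have $c_1^2 \leq 0$, while among rational and ruled surfaces only $\CP$ itself achieves the ratio $c_1^2/c_2 = 3$ (any blow-up strictly decreases it). For minimal surfaces of general type, the BMY inequality $c_1^2 \leq 3c_2$ is an equality precisely when the universal cover is the complex two-ball $\mathbb{B}$. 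So the surfaces in question are either $\CP$ or smooth ball quotients $X = \mathbb{B}/\Gamma$ with $\Gamma$ a torsion-free cocompact lattice in $\mathrm{PU}(2,1)$.

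For $\CP$ itself, the statement is immediate: any smooth decomposition $\CP = X_1 \# X_2$ forces both $X_i$ to be simply connected with $b_2(X_1) + b_2(X_2) = 1$, so one summand has vanishing $b_2$ and is thus a homotopy four-sphere.

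For a ball quotient $X = \mathbb{B}/\Gamma$, I would exploit asphericity: $\mathbb{B}$ is contractible, so $X$ is a $K(\Gamma,1)$. Suppose $X = X_1 \# X_2$. Since $\Gamma$ acts geometrically on the contractible CAT(0) space $\mathbb{B}$ it is one-ended, and Stallings' theorem (together with torsion-freeness of $\Gamma$) then prohibits any nontrivial free product decomposition. As $\pi_1(X_1) \ast \pi_1(X_2) = \Gamma$, one of the summands, say $X_1$, must be simply connected. The universal cover $\widetilde{X}$ is then obtained from $\widetilde{X_2}$ by attaching a copy of $X_1 \setminus D^4$ at each of the $|\Gamma|$ preimages of the connect-sum sphere. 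A Mayer--Vietoris computation along these three-spheres (which contribute nothing since $H_1(S^3)=H_2(S^3)=0$) gives
\[
H_2(\widetilde{X}) \;\cong\; H_2(\widetilde{X_2}) \;\oplus\; \bigoplus_{\gamma \in \Gamma} H_2(X_1).
\]
Since $\widetilde{X} = \mathbb{B}$ is contractible, the right-hand side must vanish; in particular $H_2(X_1) = 0$, and combined with $\pi_1(X_1) = 1$ this identifies $X_1$ as a homotopy four-sphere.

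The main technical points to pin down carefully are the BMY equality characterization (and the verification that no rational/ruled surface besides $\CP$ realizes the ratio $3$), and the one-endedness of torsion-free cocompact lattices in $\mathrm{PU}(2,1)$ with the accompanying Bass--Serre-type description of the universal cover of a connected sum; both are standard but essential to the argument. An alternative route, staying closer to the Seiberg--Witten techniques used elsewhere in the paper, would be to pass to a finite regular cover where $b_2^+ \geq 2$ (possible by residual finiteness of $\Gamma$) and then invoke Hamilton--Kotschick-type irreducibility of minimal K\"ahler surfaces; but the asphericity argument seems more direct and handles the $b_2^+ = 1$ case of fake projective planes without chamber subtleties.
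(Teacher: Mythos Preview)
Your argument is correct. Both your main route and the paper's hinge on the same structural fact---Yau's theorem makes such a surface a ball quotient (aside from $\CP$, which you handle separately and the paper tacitly skips), hence aspherical---but the way asphericity is converted into irreducibility differs. The paper invokes a theorem of Kahn: in a compact four-manifold, a locally flat embedded three-sphere is null-homotopic if and only if it bounds a contractible piece; since $\pi_3(X)=0$, the separating sphere in any connected-sum decomposition bounds a contractible summand, hence a homotopy four-sphere. You instead use Stallings' theorem (one-endedness of the torsion-free cocompact lattice $\Gamma$ forbids a nontrivial free product) to force one summand to be simply connected, and then read off $H_2(X_1)=0$ from a Mayer--Vietoris computation in the contractible universal cover. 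Both are clean; Kahn's theorem is a single black box that avoids the case split, while your argument is more elementary and makes the role of asphericity transparent.

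Your proposed alternative---pass to a finite cover with $b_2^+>1$ via residual finiteness of linear groups and invoke irreducibility of minimal K\"ahler surfaces there---is exactly the paper's second proof. One minor point worth tightening in a write-up: when you describe the universal cover as $\widetilde{X_2}$ with a copy of $X_1\setminus D^4$ attached at each $\Gamma$-translate, bear in mind that $\Gamma$ is infinite, so the Mayer--Vietoris is over an infinite index set; this is harmless for singular homology but deserves a word.
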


\begin{proof}
  Any compact complex surface $X$ with $c_1^2(X) > 0$ is K\"{a}hler, and
when $c_1^2(X)=3 c_2(X)$, by Yau's celebrated result \cite{Yau},
it is a quotient of the complex ball. This implies that $X$ is aspherical.

As shown by Kahn \cite{kahn}, a locally flatly embedded three--sphere
$Y$ in any compact four--manifold $X$ is null-homotopic if and only if
it bounds a contractible four--manifold in $X$.
Since $\pi_3(X)=0$, we 
conclude that for any smooth decomposition $X=X_1 \# X_2$, one
of the $X_i$ is a homotopy four--sphere. This implies that
$X$ is irreducible.

Alternatively,  by the Bogomolov-Miyaoka-Yau inequality, the complex surface $X$ is
necessarily minimal, and by well-known applications of gauge theory, a
minimal K\"{a}hler surface $X$ with $b_2^+(X)>1$ is smoothly
irreducible. In the $b_2^+(X)=1$ case, namely when $X$ is a complex
fake projective plane, we recall that $\pi_1(X)$ is a sublattice in
$PU(2,1)$.  By a classical result of Mal'cev \cite{Malcev},
every finitely generated linear group is residually
finite, consequently, $\pi_1(X)$ is residually finite. Let $X' \to X$
be a covering corresponding to a proper finite index subgroup of
$\pi_1(X) \neq 1$. The finite cover $X'$ is a K\"{a}hler surface with
$c_1^2(X')=3c_2(X')$ and $b_2^+(X')>1$, so, as argued above, it is
smoothly irreducible, in turn implying irreducibility of $X$.
\end{proof}

\subsection{First construction} \label{sec:fpp-1} \

Let $Z$ be one of the minimal smooth four-manifold homeomorphic to
$\CP \# 3 \CPb$, equipped with an orientation-preserving free
involution $\tau$ and with non-trivial Seiberg-Witten invariants
constructed in \cite{stipsicz-szabo:definite} or in
$\S$\ref{sec:signone}.  Let $Y$ be a rational homology three-sphere.

We run the $\Z_2$--equivariant circle sum construction of
$\S$\ref{sec:circlesum} with inputs $(Z, \tau)$ and
$Y$. By our assumption on $Y$, the spun $S(Y)$ is a rational homology
four-sphere. It follows that if $Z$
is a rational homology $\CP \#
3\CPb$, then so is $\widetilde{X} = Z \# S(Y)$. Consequently, by
Lemma~\ref{lem:circlesum-topology} the quotient four-manifold $X$ is a
rational homology $\CPb$ with $\pi_1(X)=\pi_1(Y) \x \Z_2$.

Note that the four-manifold $Z$ is irreducible (as it has two basic classes), while the double cover $\widetilde{X}$ is clearly reducible.
We claim that

\begin{proposition}
The four-manifold $X$ is 
  irreducible.
  \end{proposition}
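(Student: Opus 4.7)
The plan is to argue by contradiction. Suppose $X = X_1 \# X_2$ is a nontrivial smooth decomposition in which neither $X_i$ is a homotopy four-sphere; I will derive a contradiction from the Seiberg--Witten invariants of the double cover $\widetilde X = Z \# S(Y)$.

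First I would pin down the topology of the summands. The group $\pi_1(X) = \pi_1(Y) \times \Z_2$ has nontrivial center (containing the $\Z_2$ factor), while any nontrivial free product of nontrivial groups has trivial center (including the infinite dihedral group $\Z_2 * \Z_2$). Thus the Seifert--Van Kampen decomposition $\pi_1(X) = \pi_1(X_1) * \pi_1(X_2)$ forces one factor to be trivial; after relabeling, $\pi_1(X_1) = 1$. Since $X$ is a rational homology $\CPb$, $Q_X \cong \langle -1 \rangle$ splits as $Q_{X_1} \oplus Q_{X_2}$ with one summand of free rank zero. If $Q_{X_1}$ were of rank zero, then $X_1$ would be a simply connected smooth four-manifold with trivial intersection form, hence a homotopy four-sphere by Freedman---contradicting our assumption. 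Therefore $Q_{X_1} = \langle -1 \rangle$ and $Q_{X_2} = 0$; by Freedman and Donaldson, $X_1$ is homeomorphic to $\CPb$, and $X_2$ is a rational homology four-sphere with $\pi_1(X_2) = \pi_1(Y) \times \Z_2$.

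Next I would lift the decomposition to the double cover. Since $X_1$ is simply connected, its preimage in $\widetilde X$ is two disjoint copies of $X_1$, so Seifert--Van Kampen gives the smooth equivalence
\[
  Z \# S(Y) \;=\; \widetilde X \;\cong_{\mathrm{diff}}\; X_1 \# X_1 \# \widetilde X_2,
\]
where $\widetilde X_2 \to X_2$ is the connected double cover; a direct computation yields $\pi_1(\widetilde X_2) = \pi_1(Y)$, $\chi(\widetilde X_2) = 4$, $\sigma(\widetilde X_2) = 0$, and $b_2^\pm(\widetilde X_2) = 1$.

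The key step is a Seiberg--Witten comparison. By Corollary \ref{cor:circlesum-SW}, the basic classes of $\widetilde X$ are precisely the extensions of the two basic classes $\pm K_Z$ of $Z$ via the spin$^c$ structures on $S(Y)$, yielding $2\,|H_1(Y;\Z)|$ basic classes in total, each with $|\SW|$-value $|\SW_Z(K_Z)| \neq 0$. On the other hand, applying the blow-up formula (Proposition \ref{prop:blowup-SW}, with the $b_2^+=1$ refinement of Remark \ref{rem:BlowUpB2+1}) to the decomposition $\widetilde X = \widetilde X_2 \# X_1 \# X_1$ would produce the basic classes of $\widetilde X$ as quadruples $K_{\widetilde X_2} + \varepsilon_1 e_1 + \varepsilon_2 e_2$ indexed by basic classes $K_{\widetilde X_2}$ of $\widetilde X_2$ and signs $\varepsilon_i = \pm 1$, where $e_1, e_2$ are the exceptional classes coming from the two copies of $X_1$. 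The two descriptions of the basic-class set of $\widetilde X$ are incompatible: the count alone contradicts the assumption when $|H_1(Y;\Z)|$ is not a multiple of $4$, and in the remaining cases one uses the minimality of $Z$---which precludes embedded smooth $(-1)$-spheres in $Z$---together with the fact that $S(Y)$ has purely torsion $H_2$ (so any embedded sphere in $S(Y)$ is null-homologous with vanishing normal Euler number) to rule out the pair of disjoint smooth $(-1)$-spheres in $\widetilde X$ that the blow-up decomposition would require. The hardest part will be this last neck-crossing analysis, which I would carry out by leveraging the adjunction inequality against the specific basic classes $\pm K_Z$ of $Z$ together with the constraints on $\SW$-values forced by Corollary \ref{cor:circlesum-SW}.
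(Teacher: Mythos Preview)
Your outline matches the paper's proof through the reduction to $\widetilde X \cong \widetilde X_2 \# X_1 \# X_1$ with $X_1$ a homotopy $\CPb$; the center argument for $\pi_1$ and the identification of the summands are identical. The divergence is in how you extract the contradiction from Seiberg--Witten theory, and there your argument has a genuine gap.

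Counting basic spin$^c$ structures integrally, as you do, forces the case split on $|H_1(Y;\Z)|$, and your plan for the residual case---rule out two disjoint smooth $(-1)$--spheres in $\widetilde X$ using minimality of $Z$ and the torsion $H_2$ of $S(Y)$---does not go through as stated. Minimality of each summand says nothing about spheres that cross the connect-sum neck; a putative $(-1)$--sphere $E$ in $Z\# S(Y)$ only yields a class $[E_Z]\in H_2(Z;\Z)$ with $[E_Z]^2=-1$ and $K_Z\cdot[E_Z]=\pm 1$, and in a homotopy $\CP\#3\CPb$ such classes certainly exist, so no contradiction arises at that level. You acknowledge this is ``the hardest part'' and gesture at an adjunction argument, but nothing you have written pins it down. (There is also a secondary issue: with $b_2^+(\widetilde X_2)=1$ the notion of ``basic class of $\widetilde X_2$'' is chamber-dependent, and your quadruple count implicitly assumes simple-type behavior on the blow-up side.)

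The paper avoids all of this by passing to the image of the basic spin$^c$ structures under $c_1$ in $H^2(\widetilde X;\Q)$ rather than counting them integrally. The torsion contribution from $S(Y)$ vanishes rationally, so the decomposition $\widetilde X=Z\# S(Y)$ forces this image to have exactly two elements, namely $\pm K_Z$. On the other hand, the decomposition $\widetilde X=\widetilde X_2\# 2X_1$ together with the blow-up formula forces every basic class to restrict to $\pm e_i$ on each $X_1$ summand; since $e_1,e_2$ are linearly independent in rational cohomology, the image has at least four elements. That is the entire contradiction---no case analysis, no neck-crossing spheres. Your integral count is a detour that ultimately needs exactly this rational observation to close, so you should replace it rather than patch it.
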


\begin{proof}
We first note
that $\pi_1(X)$ cannot be expressed as a free product of non-trivial
groups: indeed, $\pi _1 (X)$ has non-trivial center, while the free
product of non-trivial groups has trivial center.
Assume now that $X$ is reducible, so $X=X_1 \# X_2$, where neither $X_i$
is a homotopy four-sphere. By the above observation and the
Seifert-Van Kampen theorem, one of $X_i$, say $X_1$, should be simply
connected. So, by the Mayer-Vietoris long exact sequence, $X_1$ is a
homotopy $\CPb$ and $X_2$ is a rational homology four-sphere with
$\pi_1(X_2)=\pi_1(X)$. This implies that the double cover decomposes
as $\widetilde{X}= \widetilde{X}_2 \# \, 2 X_1$, where $\widetilde{X}_2$
is a double cover of $X_2$.

Consider now the image of the set $\{ \s \in {\rm {Spin}}^c(\widetilde{X})
\mid \SW_{\widetilde{X}}(\s )\neq 0 \}$ under the map $c_1\colon {\rm
  {Spin}}^c(\widetilde{X}) \to H^2 (\widetilde{X}; \Q )$ we get by
associating the first Chern class with rational coefficients to a
spin$^c$ structure.  The formulae of Proposition~\ref{prop:blowup-SW}
and Corollary~\ref{cor:circlesum-SW} show that the decomposition
$\widetilde{X}=Z \# S(Y)$ gives that the image has two elements
(as $Z$ has two basic classes), while
the decomposition $\widetilde{X}= \widetilde{X}_2 \# \, 2 X_1$ implies
that the image has at least four elements, providing the desired
contradiction, verifying the irreducibility of $X$.
\end{proof}

\begin{thm}\label{thm:fpp1} 
Let $R$ be the fundamental group of a rational homology three-sphere.
Then, there is an infinite family $\mathcal{F}(R)=\{F_k(R) \, | \, k
\in \Z^+\}$ of pairwise non-diffeomorphic, irreducible fake projective
planes with $\pi_1(F_k(R))=R \x \Z_2$. None of the fake projective
planes $F_k(R)$ admit symplectic or complex structures.
\end{thm}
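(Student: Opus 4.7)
The plan is to apply the $\Z_2$-equivariant circle sum construction of $\S$\ref{sec:circlesum} to an infinite family of simply connected inputs. Take $\{Z_k\}_{k\in\Z^+}$ to be the family of simply connected smooth four-manifolds homeomorphic to $\CP \# 3\CPb$ provided by the $g=1$ instance of the construction in $\S$\ref{sec:signone} (or by \cite{stipsicz-szabo:definite}); each $Z_k$ carries a free orientation-preserving involution $\tau_k$ and has exactly two Seiberg-Witten basic classes $\pm K_k$, with $\SW_{Z_k}(\pm K_k) = \pm k^2$. For the given rational homology three-sphere $Y$ with $\pi_1(Y) = R$, I would carry out the circle sum on $(Z_k, \tau_k)$ and $Y$ to produce a double cover $\widehat{X}_k \cong Z_k \# S(Y)$ and quotient $X_k$; then set $F_k(R) := \overline{X}_k$.

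First I would verify the topological type: by Lemma~\ref{lem:circlesum-topology}, $\pi_1(X_k) = R \x \Z_2$, $\chi(X_k) = 3$, and $\sigma(X_k) = -1$. Since $Y$ is a rational homology three-sphere, $S(Y)$ is a rational homology four-sphere, so $\widehat{X}_k$ has the rational cohomology ring of $\CP \# 3\CPb$ with odd intersection form inherited from $Z_k$. Hence $X_k$ is a rational homology $\CPb$, and reversing the orientation, $F_k(R)$ has the rational cohomology ring and integer intersection form of $\CP$. So $F_k(R)$ is a fake projective plane with the claimed fundamental group, and its irreducibility is exactly the preceding proposition, applied with $Z = Z_k$.

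For pairwise non-diffeomorphism, I would use the double covers: by Corollary~\ref{cor:circlesum-SW} every basic class of $Z_k$ lifts to a basic class of $\widehat{X}_k$ of the same absolute Seiberg-Witten value, so $M_{\SW}(\widehat{X}_k) \geq k^2$, unbounded in $k$. Proposition~\ref{prop:multiplecovers} with $n=2$ (applied via the canonical index-two subgroup $R \subset R \x \Z_2$) then yields, after passing to an infinite subfamily, that the $F_k(R)$ are pairwise non-diffeomorphic. Complex structures are ruled out immediately by the Prasad--Yeung/Cartwright--Steger classification: every complex fake projective plane is a ball quotient by a torsion-free arithmetic lattice in $PU(2,1)$, in particular has torsion-free fundamental group, while $\pi_1(F_k(R)) = R \x \Z_2$ has the torsion element of the $\Z_2$-factor.

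Ruling out symplectic structures on $F_k(R)$ is the main obstacle. If $F_k(R)$ were symplectic with canonical class $K_\omega$, then simple type would give $K_\omega^2 = 2\chi + 3\sigma = 9$, and Taubes would yield $\SW_{F_k(R)}(\pm K_\omega) = \pm 1$ in the symplectic chamber. Pulling $K_\omega$ back to the universal double cover $\overline{\widehat{X}_k} = \overline{Z_k} \# \overline{S(Y)}$, whose $b_2^+ = 3$ makes the Seiberg-Witten invariants chamber-free diffeomorphism invariants, would force a basic class of nontrivial Seiberg-Witten value on $\overline{\widehat{X}_k}$ that, by the connected sum formula applied with the rational homology four-sphere $\overline{S(Y)}$, descends to a basic class of $\overline{Z_k}$. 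The hardest step will be to show, through a careful analysis of the basic classes of $\overline{Z_k}$ via the explicit construction of $\S$\ref{sec:signone} and the behavior of Seiberg-Witten invariants under orientation reversal, that no such class can meet Taubes' constraint, thereby producing the desired contradiction.
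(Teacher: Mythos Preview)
Your construction of $F_k(R)$ via the circle sum, the verification of the topological type, irreducibility, and pairwise non-diffeomorphism all match the paper's proof essentially verbatim. Your argument excluding complex structures (torsion in $\pi_1$ is incompatible with being a ball quotient) is a valid variant of the paper's observation that the universal cover is not contractible; both rest on Yau's theorem.

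The genuine gap is in ruling out symplectic structures. You correctly identify this as the main obstacle, but your proposed route---pulling back $K_\omega$ to the double cover and then analyzing basic classes of $\overline{Z_k}$ under orientation reversal---is left as an unfinished plan, and the behavior of Seiberg-Witten invariants under orientation reversal is not something you can read off from the data at hand. The paper sidesteps this entirely with a geometric observation: in each $Z_k$ (away from the equivariant curve $\gamma_k$) there is an embedded torus of self-intersection $-2$, obtained by gluing the two copies of the exceptional sphere in the two $W_k$ pieces, each of which meets $G_1$ transversely in two points. This torus survives into $Z_k \# S(Y)$, and after reversing orientation it becomes a genus-one surface of square $+2$ in the double cover of $F_k(R)$. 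Since this double cover has $b_2^+=3$, the adjunction inequality would force $0 \geq 2 + |K(T)|$ for any basic class $K$, which is impossible; hence the double cover has trivial Seiberg-Witten invariants and so cannot be symplectic. As symplectic structures pull back along finite covers, $F_k(R)$ itself admits no symplectic structure. This adjunction argument is the missing idea you need; once you have the square $-2$ torus, no analysis of $\SW_{\overline{Z_k}}$ is required.
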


\begin{proof}
Let $\{ Z_k \, | \, k \in \Z^+\}$ be an infinite family of irreducible
homotopy $\CP \# 3 \CPb$s, where each $Z_k$ is equipped with an
orientation-preserving free involution $\tau_k$ and $\SW_{Z_k} \nequiv
\SW_{Z_{k'}}$ when $k \neq k'$.  Such a family is given in
$\S$\ref{sec:signone}.

Running the above line of arguments for each $Z_k$ and a fixed
rational homology three-sphere $Y$, we get an infinite family $\{ X_k
\, | \, k \in \Z^+\}$ of irreducible
rational homology $\CPb$s. By Proposition~\ref{prop:multiplecovers}, passing to an infinite subindex family if needed, we can moreover assume that this family consists of pairwise non-diffeomorphic four-manifolds. 
Letting $F_k(R):=\overline{X}_k$, the four-manifold
$X_k$ with reversed orientation, we get the desired family
$\mathcal{F}(R)$, where $R= \pi_1(Y)$.

As the universal cover of $F_k(R)$ is not contractible, by Yau's
theorem \cite{Yau} none of these fake projective plane may admit a
complex structure.  However, to argue that $F_k(R)$ cannot admit a
symplectic structure, we rely on the topology of the particular
$Z_k$ we employed in our construction. As shown below, an alternate
proof of the non-existence of a complex structure on $F_k(R)$ will
also follow from this observation.

We observe that there is an embedded torus of square $-2$ in 
$Z_k \setminus \, \nu(\gamma_k)$. Here $\gamma_k$ is the
$\tau_k$--equivariant curve we use in the circle sum. One can realize
this torus by patching --- during the normal connected sum --- a copy of the
exceptional sphere in $W_k$ that intersects $G_1$ at two points
with another copy coming from the second $W_k$ piece; see
$\S$\ref{sec:signone}. This torus is also contained in $Z_k \# S(Y)$,
which, with opposite orientation, is the double cover of
$F_k(R)$. Because these oppositely oriented double covers contain
embedded tori of square $+2$, they violate the Seiberg-Witten
adjunction inequality (as their small perturbation Seiberg-Witten
invariants are non-trivial). Consequently, these
four-manifolds cannot admit
symplectic structures. Since admitting a symplectic structure is a
property preserved under coverings, we conclude that no $F_k(R)$ can
admit a symplectic structure. Lastly, because $b_1(F_k(R))$ is even,
it admits a complex structure if and only if it admits a K\"{a}hler
structure, providing another argument for $F_k(R)$ admitting no
complex structure.
\end{proof}

\begin{example} \label{ex:H1ofFPPs}
  Suppose that $A$ is a finite abelian group
  with primary decomposition $A\cong \Z_{n_1}\oplus \cdots \oplus \Z_{n_\ell}$ into cyclic groups.
  Then the group $G=\Z _2\oplus A$ is the first homology group of
  infinitely many smoothly distinct fake projective planes by applying the
  above circle sum construction by choosing $Y$ to be the connected
  sum of the lens spaces $L(n_i, 1)$ for $i=1, \ldots , \ell$.
  \end{example}

\subsection{Second construction} \label{sec:fpp-2} \

Our next family of irreducible fake projective planes have  more
restricted first homology than the examples above. Nonetheless, they
provide further examples with
additional fundamental groups.

We modify the construction of the minimal symplectic homotopy $\CP \#
3 \CPb$ in $\S$\ref{sec:signone}
(cf. \cite[$\S$4]{stipsicz-szabo:definite}) via equivariant Luttinger
and non-Luttinger surgeries along Lagrangian tori. Here, the
$\Z_2$--equivariance refers to performing simultaneous surgeries along
pairs of tori mapped to each other under the involution, with matching
choices.

Our main building block is $T^4 \# \CPb$, taken with the same embedded
\mbox{genus--$2$} symplectic surface $G_1$ of square zero and the
same pair of Lagrangian tori as in the building block
$W(p_1/q_1,p_2/q_2)$. (Recall that this four-manifold was introduced as the
blow-up of the building block $V(p_1/q_1, p_2/q_2)$.) However, we will
take different surgery coefficients for the pair of torus surgeries
this time. We consider two families: $W(-1/m, -1/n)$ and $W(-\ell ,
-1/m)$, where $m,n, \ell \in \Z^+$. Here, we use the notations
introduced in $\S$\ref{ssec:construction}.

Applying the normal connected sum construction of $\S$\ref{sec:fibersum}, we let
$X(m,n)$ denote the double of $W(-1/m, -1/n)$. (In the notation of $\S$\ref{ssec:construction}, this manifold is constructed by taking $g=1$, hence
working in $T^4$ and its blow-up $T^4\# \CPb$.)
This manifold is equipped with the
orientation-preserving free involution $\tau_{m,n}$, and we let
$F(m,n)$ denote the quotient four-manifold $X(m,n) \, / \,
\tau_{m,n}$, taken with the \emph{opposite} orientation.  Similarly,
let $X_m(\ell)$ denote the double of $W(-\ell, -1/m)$ with involution
$\tau'_{\ell ,m}$, and let $F_m(\ell )$ be the quotient four-manifold $X_m(\ell)
\, / \, \tau'_{\ell ,m}$, also taken with the opposite orientation.

By the irreducibility of $X(m,n)$ and $X_m(\ell)$
every $F(m,n)$ and $F_m(\ell )$ is irreducible.  From the homology
relations induced by the Luttinger surgeries in $T^4 \# \CPb$, we see
that $H_1(X(m,n); \Q)=H_1(X_m(\ell ); \Q)=0$. If we denote any of these
quotient four-manifolds
$F(m,n), F_m(\ell )$
by $F$ and its double cover by $q\colon
\widetilde{F} \to F$, then from the short exact sequence
\[
1 \longrightarrow \pi_1(\widetilde{F}) {\longrightarrow} \pi_1(F) \longrightarrow \Z_2 \longrightarrow 1 
\]
of their
fundamental groups we derive (for instance, by
applying the abelianization and $\Q$--tensoring functors) that
\begin{cor} 
For the four-manifold $F$ defined above we have $H_1(F;
\Q)= 0$, $\chi(F)=3$ and $\sigma(F)=1$, hence 
$F$ is a fake projective plane. \qed
\end{cor}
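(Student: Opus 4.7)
The plan is to read off $\chi(F)$ and $\sigma(F)$ from Lemma~\ref{lem:b2}, then extract $H_1(F;\Q)=0$ from the covering short exact sequence by abelianization, and finally combine these with the definition of a fake projective plane given in $\S$\ref{sec:intro}.

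For the numerical invariants, I note that the building block $W(p_1/q_1,p_2/q_2)$ is the blow-up $T^4\#\CPb$ modified by two torus surgeries along Lagrangian tori; since torus surgeries preserve Euler characteristic and signature, $\chi(W)=\chi(T^4\#\CPb)=1$ and $\sigma(W)=-1$ uniformly for the surgery profiles $(-1/m,-1/n)$ and $(-\ell,-1/m)$. The doubling surface $G_1\subset W$ has genus $g=2$, so Lemma~\ref{lem:b2} applied to the equivariant normal connected sum gives $\chi=1+2(2-1)=3$ and $\sigma=-1$ for the quotient four-manifolds $X(m,n)/\tau_{m,n}$ and $X_m(\ell)/\tau'_{\ell,m}$. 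Reversing orientation flips $\sigma$ but preserves $\chi$, so $\chi(F)=3$ and $\sigma(F)=1$.

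For the homological input, I would take the displayed short exact sequence of fundamental groups and apply abelianization, which is right-exact, to obtain
\[
H_1(\widetilde{F};\Z)\longrightarrow H_1(F;\Z)\longrightarrow \Z_2\longrightarrow 0.
\]
Tensoring with $\Q$ preserves right-exactness and kills $\Z_2$; since $H_1(\widetilde{F};\Q)=0$ (following from $H_1(X(m,n);\Q)=H_1(X_m(\ell);\Q)=0$, together with the fact that rational $H_1$ is insensitive to orientation reversal), $H_1(F;\Q)$ is a quotient of $0$ and therefore vanishes.

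Combining these computations, $b_1(F)=0$ together with $\chi(F)=3$ forces $b_2(F)=1$, and $\sigma(F)=1$ then forces $b_2^+(F)=1$, $b_2^-(F)=0$. Hence $H^*(F;\Q)\cong\Q[\alpha]/(\alpha^3)$ with $\alpha^2[F]=1$, matching $H^*(\mathbb{CP}^2;\Q)$ as rings, and the integral intersection form on $H_2(F;\Z)/\text{torsion}$ is the unimodular rank-one form $\langle 1\rangle$, again matching $\mathbb{CP}^2$. Since $\pi_1(F)$ is nontrivial in each instance of the construction while $\pi_1(\mathbb{CP}^2)=1$, $F$ is not diffeomorphic to $\mathbb{CP}^2$, so by the definition recorded in $\S$\ref{sec:intro}, $F$ is a fake projective plane. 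There is no substantive obstacle in this argument; the only subtlety is tracking the orientation reversal in the definition of $F$ so that the signature emerges with the correct sign.
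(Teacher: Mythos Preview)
Your proof is correct and follows essentially the same approach as the paper: the text immediately preceding the corollary explicitly instructs one to apply the abelianization and $\Q$--tensoring functors to the displayed covering short exact sequence, which is exactly what you do, and your computation of $\chi$ and $\sigma$ via Lemma~\ref{lem:b2} (with $Z=W$, $g(G_1)=2$, then orientation reversal) is the intended one. Your added verification that the resulting Betti numbers and intersection form match $\CP$ and that $\pi_1(F)\neq 1$ (forced by the existence of the double cover) simply makes explicit what the paper leaves implicit.
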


Each $X(m,n)$ is symplectic, since all the torus surgeries involved in
their construction are Luttinger surgeries. This is not the case for
$X_m(\ell )$, and in fact, a calculation of their Seiberg-Witten
invariants shows that they do not admit symplectic structures
once $\ell >2$, cf. 
\cite{stipsicz-szabo:definite}. Here, we can invoke Propositions~\ref{prop:torusSW} and~\ref{prop:multiplecovers} to conclude that, possibly after passing to an infinite subindex family,  
any $F_m(\ell )$ and $F_{m}(\ell ')$
are non-diffeomorphic for $\ell \neq \ell '$.

From their construction, we observe that the doubles $X(m,n)$ and
$X_m(\ell )$ also contain embedded tori of square $-2$. As in the
proof of Theorem~\ref{thm:fpp1}, this implies that $F(m,n)$ and
$F_m(\ell )$ (covered by oppositely oriented $X(m,n)$ and
$X_m(\ell)$) do not admit any symplectic or complex structures either.

Lastly, we turn to the fundamental groups we get. While our
construction here yields a fairly rich family of fundamental groups
for the fake projective planes $F(m,n)$ and $F_m(\ell )$, we restrict
our calculation to a subfamily, where we can explicitly identify the
resulting groups. Indeed, in the following we will focus exclusively
on $F_m(\ell)$ with $m $ odd.

We first argue that $\pi_1(X_m(\ell))=\Z_m \x \Z_m$.
Indeed, when computing $\pi _1(W_m(\ell))$, we need
to modify  the relators in~\eqref{eq:first} and \eqref{eq:second} as follows:
replace the
relator
\[
[b^{-1}x^{-1}]a^{-1}\qquad {\rm{ with}} \qquad [b^{-1}x^{-1}]a^{-m}
\]
and the relator
\[
  [b^{-1}, y^{-1}]^mx^{-1} \qquad {\rm { with}} \qquad [b^{-1}, y^{-1}]^\ell x^{-1}.
  \]
  
Then, the same arguments as in the proof of Proposition~\ref{prop:FundGroupCalcBasic}
  show that the group $\pi_1(X_m(\ell))$ is a quotient of
  $\Z_m \x \Z_m$.  
  The first homology group of this manifold
  is easier to determine from the relations induced by the torus surgeries, giving
  $H_1(X_m(\ell); \Z)=\Z_m \x \Z_m$, veryfing
  $\pi _1 (X_m(\ell))\cong \Z _m \x \Z _m$.

\begin{lemma}
For any $\ell  \in \Z^+$ and odd $m \in \Z^+$,
we have $\pi_1(F_m(\ell))=\Z_m \x D_m$,  where $D_m$
is the dihedral group of order $2m$.
\end{lemma}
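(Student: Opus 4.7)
The plan is as follows. Since $\tau'_{\ell,m}$ is a free involution on $X_m(\ell)$ and we have already established $\pi_1(X_m(\ell))\cong \Z_m\x\Z_m$, there is a short exact sequence
\[ 1 \longrightarrow \Z_m\x\Z_m \longrightarrow \pi_1(F_m(\ell)) \longrightarrow \Z_2 \longrightarrow 1,\]
presenting $\pi_1(F_m(\ell))$ as a group of order $2m^2$. For odd $m$ the orders of the kernel and quotient are coprime, so by Schur--Zassenhaus the sequence splits, identifying $\pi_1(F_m(\ell))$ with a semidirect product $(\Z_m\x\Z_m)\rtimes_{\varphi}\Z_2$ for an involutive automorphism $\varphi$. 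It remains to determine $\varphi$ and then to verify that the resulting semidirect product is isomorphic to $\Z_m\x D_m$.

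To determine $\varphi$, I would retrace the fundamental group analysis of Proposition~\ref{prop:FundGroupCalcBasic} with the modified relators. Labelling the generators coming from the two copies of $W_m(\ell)$ by subscripts $L$ and $R$, the gluing $\Phi$ identifies $b_L\leftrightarrow x_R$ and $y_L\leftrightarrow a_R$ (and symmetrically). The replacement $[b^{-1},y^{-1}]^\ell x^{-1}$, combined with $[x_R,a_R]=1$ from the opposite side, still forces $x_L=x_R=1$, hence $b_L=b_R=1$; the replacement $[b^{-1},x^{-1}]a^{-m}$ then reduces to $a_L^m=a_R^m=1$, while the surviving relation $[y,a]=1$ yields commutativity of $a_L$ with $y_L=a_R$. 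Thus $\pi_1(X_m(\ell))$ is generated by $\alpha:=a_L$ and $\beta:=a_R$ with $\alpha^m=\beta^m=1$ and $[\alpha,\beta]=1$, in agreement with the homological count. Because $\tau'$ interchanges the two copies of $W_m(\ell)$, it acts on this set of generators by the swap $\alpha\leftrightarrow\beta$ (possibly after adjusting by an inner automorphism, which is absorbed into the choice of splitting).

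Finally, to recognize the semidirect product, set $z:=\alpha\beta$ and $w:=\alpha\beta^{-1}$ in $\Z_m\x\Z_m$, and let $\sigma\in\pi_1(F_m(\ell))$ be an order-two lift of the generator of $\Z_2$ provided by the splitting. Then $\sigma z\sigma^{-1}=\beta\alpha=z$, so $\langle z\rangle\cong\Z_m$ commutes with $\sigma$, while $\sigma w\sigma^{-1}=\beta\alpha^{-1}=w^{-1}$, so $\langle w,\sigma\rangle\cong D_m$. Since $m$ is odd, $2$ is invertible modulo $m$, so $\langle z,w\rangle\supseteq\langle \alpha^2,\beta^2\rangle=\Z_m\x\Z_m$, and the intersection $\langle z\rangle\cap\langle w\rangle$ is trivial (the equation $(j,j)\equiv(k,-k)\pmod m$ forces $2j\equiv 0\pmod m$, hence $j\equiv 0$). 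Therefore
\[ \pi_1(F_m(\ell))=\langle z\rangle\x\langle w,\sigma\rangle\cong \Z_m\x D_m.\]

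The main obstacle in this plan is the middle step: verifying that $\tau'$ really acts on $\pi_1(X_m(\ell))$ as the literal swap $\alpha\leftrightarrow\beta$ (and not composed with some further nontrivial automorphism of $\Z_m\x\Z_m$). This is a careful but essentially routine extension of the computation underlying Proposition~\ref{prop:FundGroupCalcBasic} and Theorem~\ref{thm:SimplyConnected}, tracking both the boundary identification $\Phi_g$ and the change of basepoint induced by $\tau'$. The oddness of $m$ enters decisively in two places: to split the extension via Schur--Zassenhaus, and to invert $2$ in the final algebraic identification.
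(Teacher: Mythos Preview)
Your proof is correct and follows essentially the same route as the paper: both establish the split extension $(\Z_m\times\Z_m)\rtimes\Z_2$ with the swap action and then diagonalize via $z=\alpha\beta$, $w=\alpha\beta^{-1}$ to recognize $\Z_m\times D_m$. The only notable difference is that you invoke Schur--Zassenhaus for the splitting, whereas the paper exhibits an explicit order-two element $\gamma\in\pi_1(F_m(\ell))$ geometrically (as the image of an arc $\alpha$ with $\alpha\cup\tau'(\alpha)$ a null-homotopic loop in the simply connected model underlying $X_m(\ell)$); either works, and the paper's explicit splitting avoids the need to pin down the swap action only up to inner automorphism.
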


\begin{proof}
  To simplify the notation, let us set $\widetilde{X}=X_m(\ell)$,
  $X=F_m(\ell)$, and $m=2n+1$ for some $n \in \N$.  Let $\tau$ denote
  the free involution on $\widetilde{X}$ and $q\colon \widetilde{X}
  \to X$ the corresponding double covering. The induced short exact
  sequence
\begin{equation} \label{eq:coveringpi1}
1 \longrightarrow \pi_1(\widetilde{X}) \overset{{q}_*}{\longrightarrow} \pi_1(X) \longrightarrow \Z_2 \longrightarrow 1 
\end{equation}
splits on the right by the homomorphism $s\colon \Z_2 \to \pi_1(X)$ defined by $s(1)=\gamma$, where $\gamma=q(\alpha)$ for $\alpha$ a path lifting $\gamma$ and such that $\alpha \cup \tau(\alpha)$ is a loop doubly covering $\gamma$. Thus, $\pi_1(X)$ is a semi-direct product $\pi_1(\widetilde{X})
\rtimes \Z_2 = (\Z_m \x \Z_m ) \rtimes \Z_2$.

To calculate the action of $\Z_2$ on $\Z_m \x \Z_m =
q_*(\pi_1(\widetilde{X})) \triangleleft \pi_1(X)$, let the generators
$(1,0)$ and $(0,1)$ correspond to loops
$\beta_1=q(\widetilde{\beta}_1)$ and $\beta_2=q(\widetilde{\beta}_2)$,
where each $\widetilde{\beta}_i$ is a loop in a separate copy of
$\partial W(-\ell,-1/m) \setminus \nu \Sigma$ in $W_m(\ell)$. It
follows that $\gamma \beta_1 \gamma^{-1} =q (\alpha
\widetilde{\beta}_1 \,\tau(\alpha))= q(\widetilde{\beta}_2)= \beta_2$,
and similarly, we get $\gamma \beta_2 \gamma^{-1}=\beta_1$. We deduce
that the $\Z_2$ action on $\Z_m \x \Z_m$ is given by $(r,s) \mapsto
(s,r)$ for all $r, s \in \Z_m$.

Denoting the generators of $\Z_m \x \Z_m$ by $a$ and $b$ and that of
$\Z_2$ by $c$, we get the following presentation for $\pi_1(X)$:
\begin{align*}
   &  &\langle a, b,c \ |& \ a^m, b^m, c^2, [a,b], cac^{-1}b^{-1} \rangle  \\
    =&  &\langle a, b,c, x, y \ |& \ a^m, b^m, c^2, [a,b], cac^{-1}b^{-1}, x^{-1}ab, y^{-1}a^{-1}b \rangle  \\
    =&  &\langle a, b,c, x, y \ |& \ a^m, b^m, c^2, [a,b], cac^{-1}b^{-1}, x^{-1}ab, y^{-1}a^{-1}b, \\
    & & & \  x^m, y^m, [x,y], cxc^{-1}x^{-1}, cyc^{-1}y, a^{-2} x y^{-1}, b^{-2} xy
    \rangle  \\
     =&  &\langle c, x, y \ |& \ x^m, y^m, c^2, [x,y], [x,c], cyc^{-1}y \rangle ,
\end{align*}
where we relied on the relations $a=(a^2)^{n+1}$ and $b=(b^2)^{n+1}$ in $\pi_1(X)$, made possible by $m=2n+1$ being odd. So we get 
\[ 
\pi_1(X)=\langle x \ | x^m \rangle \x \langle c,y \ | \ y^m, c^2, cyc^{-1}y \rangle = \Z_m \x D_m
\]
as claimed. 
\end{proof}

We have therefore proved:

\begin{thm}\label{thm:fpp2}
  For every fixed odd $m\in \Z ^+$, there exists an infinite family
  \linebreak $\mathcal{F}(m)= \{F_\ell (m) \, | \, \ell \in \Z^+\}$ of pairwise
  non-diffeomorphic, irreducible fake projective planes with
  $\pi_1(F_\ell (m))=\Z_m \x D_m$. None of the fake projective planes
  $F_\ell (m)$ admit symplectic or complex structures.
\end{thm}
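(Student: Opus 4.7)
The plan is to consolidate the construction and results already laid out in $\S$\ref{sec:fpp-2}. Starting from the building block $W(-\ell, -1/m)$, the $\Z_2$--equivariant normal connected sum along the genus--$2$ surface $G_1$ produces a double $X_m(\ell)$ with a free orientation-preserving involution $\tau'_{\ell,m}$, and I define $F_\ell(m) := X_m(\ell)/\tau'_{\ell,m}$ with reversed orientation. The Corollary stated just above gives $H_1(F_\ell(m);\Q) = 0$, $\chi(F_\ell(m)) = 3$, and $\sigma(F_\ell(m)) = 1$, and the preceding Lemma computes $\pi_1(F_\ell(m)) = \Z_m \x D_m$ for odd $m$; Freedman's classification then identifies each $F_\ell(m)$ as a topological fake projective plane with the advertised fundamental group.

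To distinguish smooth structures I would apply Proposition~\ref{prop:torusSW} to the torus carrying the variable surgery coefficient $-\ell$. The symplectic case $\ell = 1$ is handled by Taubes' theorem, and the linear formula in the proposition then shows that the Seiberg-Witten function of the appropriately oriented double cover $X_m(\ell)$ is an affine function of $\ell$, making $M_{\SW}$ unbounded on the family $\{X_m(\ell)\}$. Since $\Z_m \x D_m$ is finite and therefore has only finitely many index-two subgroups, Proposition~\ref{prop:multiplecovers} with $n = 2$ extracts an infinite subfamily of pairwise non-diffeomorphic $F_\ell(m)$.

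For irreducibility, I would follow the scheme from the proof of Theorem~\ref{thm:fpp1}. The center of $\Z_m \x D_m$ contains the $\Z_m$ factor and is therefore non-trivial for $m > 1$, so $\pi_1(F_\ell(m))$ admits no non-trivial free product decomposition, and by the Seifert-Van Kampen theorem any smooth connected sum decomposition $F_\ell(m) = X_1 \# X_2$ must have one summand, say $X_1$, simply connected. Then $X_1$ is a homotopy $\CP$ (or $\CPb$) and $X_2$ a rational homology four-sphere with $\pi_1 = \Z_m \x D_m$, forcing the double cover to split smoothly as $\widetilde{X}_2 \# 2 X_1$. Comparing this hypothetical decomposition against the set of first Chern classes of Seiberg-Witten basic classes of $X_m(\ell)$ furnished by Proposition~\ref{prop:torusSW} (a small, bounded set of the same cardinality as in the symplectic case $\ell = 1$), the blow-up formula Proposition~\ref{prop:blowup-SW} yields a contradiction, exactly as in $\S$\ref{sec:fpp-1}.

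Finally, to rule out symplectic and complex structures, I would use the $-2$--torus in $X_m(\ell)$ obtained by patching the two $(-1)$--exceptional spheres from the two copies of $W(-\ell, -1/m)$, each of which meets $G_1$ transversely twice, across the identified genus--$2$ surface. Under the reversed orientation, this yields an embedded torus of square $+2$ in the double cover of $F_\ell(m)$, on which any basic class would force the adjunction inequality to fail; since the (small-perturbation) Seiberg-Witten invariants of this cover remain non-trivial by Proposition~\ref{prop:torusSW}, the cover (and hence $F_\ell(m)$) carries no symplectic structure. As $b_1(F_\ell(m)) = 0$ is even, any complex structure on $F_\ell(m)$ would produce a K\"ahler, hence symplectic, structure, and so none exists. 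The main obstacle I anticipate is the irreducibility step: for the non-symplectic range $\ell > 2$, Lemma~\ref{lem:sympsum} does not apply, and one must rely solely on the basic class count delivered by Proposition~\ref{prop:torusSW} to execute the blow-up comparison cleanly.
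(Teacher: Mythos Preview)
Your proposal is essentially correct and tracks the paper's development in \S\ref{sec:fpp-2} closely: the construction, the fundamental group via the preceding Lemma, the distinguishing of smooth structures via Propositions~\ref{prop:torusSW} and~\ref{prop:multiplecovers}, and the exclusion of symplectic and complex structures via the $(-2)$--torus all match the paper.

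The one genuine divergence is in the irreducibility argument. The paper handles this in a single line: the double cover $X_m(\ell)$ is itself irreducible, and irreducibility passes to quotients. You instead transplant the argument from \S\ref{sec:fpp-1}, assuming a decomposition of the quotient and lifting it to the cover. That more elaborate route was forced in \S\ref{sec:fpp-1} precisely because the cover there, $Z\#S(Y)$, is visibly reducible; here the cover is not, so the detour is avoidable. You are right, however, to flag that for $\ell>1$ Lemma~\ref{lem:sympsum} does not apply, and the paper's bare assertion that $X_m(\ell)$ is irreducible must in the end be justified by the same two-basic-class Seiberg-Witten count (via Proposition~\ref{prop:torusSW} and the $\ell=1$ symplectic case, as in \S\ref{sec:signone} and \cite{stipsicz-szabo:definite}) that you invoke. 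So both routes rest on the same Seiberg-Witten input; the paper's packaging is simply shorter.

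One minor correction: invoking Freedman's classification to conclude that $F_\ell(m)$ is a fake projective plane is both unnecessary and inapplicable. The paper's notion of a fake $\CP$ requires only the rational cohomology ring and the integral intersection form, which the cited Corollary establishes directly; and the Freedman and Hambleton--Kreck classifications do not cover the non-cyclic group $\Z_m\times D_m$ in any case.
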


\begin{remark}
  Theorem~\ref{thmb} is a combined statement of Theorems~\ref{thm:fpp1}
and~\ref{thm:fpp2}. Yet, many more non-complex irreducible fake
projective planes with different fundamental groups than the ones we
singled out can be constructed by combining the two constructions we
have presented here.
\end{remark}

\begin{remark}
As we noted in Example~\ref{ex:H1ofFPPs}, we get irreducible fake
projective planes with first homology $A \x \Z_2$, for $A$ any desired
finite abelian group $A$. It follows that exactly 41 out of 50 complex
fake projective planes have the same integral homology as one of the
non-symplectic fake projective planes we have produced here;
cf. \cite[Appendix]{GalkinKatzarkovMellit}.
\end{remark}

\begin{remark} 
We can refine our infinite families $\mathcal{F}(R)=\{F_k(R) \, | \, k
\in \Z^+\}$ and $\mathcal{F}(m)= \{F_\ell (m) \, | \, \ell \in \Z^+\}$
so that each one contains infinitely many, pairwise non-diffeomorphic
irreducible fake projective planes in the same homeomorphism class.

For the fake projective planes constructed in
$\S$\ref{sec:fpp-1}, the key observation is the following:
Because the quotients $Z_k/ \tau_k$ and $Z_{k'}/ \tau_{k'}$ are
homeomorphic, there is an equivariant homeomorphism between $(Z_k,
\tau_k)$ and $(Z_{k'}, \tau_{k'})$. Performing the equivariant circle
sum with $S^1 \x Y$ along circles that match under this homeomorphism,
we can extend this equivariant homeomorphism between $(Z_k
\#_{\gamma=\gamma'} S^1 \x Y, \tau_k')$ and
$(Z_{k'}\#_{\gamma=\gamma'} S^1 \x Y, \tau_{k'}')$, where $\tau'_k$,
$\tau'_{k'}$ are the free involutions on the circle
sums. Hence, we can assume that the quotients $F_k(R)$ and
$F_{k'}(R)$ are homeomorphic.

For the families of fake projective planes discussed in
$\S$\ref{sec:fpp-2}, we note that the result of Hambleton and Kreck
in \cite{hambleton-kreck2} implies that there are finitely many
homeomorphism types with a fixed finite fundamental group and integral
intersection form. Since the fake projective planes $F_\ell(m)$ have
finite fundamental groups, the existence of an infinite exotic family
is a consequence of the pigeonhole principle.
\end{remark}

\bigskip

\bibliography{exotic_pi1Z2.bib}
\bibliographystyle{plain}

\end{document}